\theoremstyle{plain}
\newtheorem{thm}{Theorem}[section]
\newtheorem{lem}[thm]{Lemma}
\newtheorem{prop}[thm]{Proposition}
\theoremstyle{definition}
\newtheorem{rmk}[thm]{Remark}
\renewcommand{\ge}{\geqslant}
\renewcommand{\le}{\leslant}
\newcommand{\field}[1]{\mathbb{#1}}
\newcommand{\Q}{\field{Q}}
\newcommand{\C}{\field{C}}
\newcommand{\R}{\field{R}}
\newcommand{\Z}{\field{Z}}
\newcommand{\A}{\field{A}}
\newcommand{\F}{\field{F}}
\newcommand{\p}{\field{P}}
\newcommand{\G}{\field{G}}
\newcommand{\X}{\field{X}}
\newcommand{\nr}{nr}
\DeclareMathOperator{\PGL}{PGL}
\DeclareMathOperator{\End}{End}
\DeclareMathOperator{\Aut}{Aut}
\DeclareMathOperator{\Lie}{Lie}
\DeclareMathOperator{\val}{val}
\DeclareMathOperator{\Gal}{Gal}
\DeclareMathOperator{\GL}{GL}
\DeclareMathOperator{\SL}{SL}
\DeclareMathOperator{\Hom}{Hom}
\DeclareMathOperator{\Spec}{Spec}
\DeclareMathOperator{\Tr}{Tr}
\DeclareMathOperator{\Div}{Div}
\DeclareMathOperator{\pic}{Pic}
\DeclareMathOperator{\Spin}{Spin}
\DeclareMathOperator{\Ram}{Ram}
\DeclareMathOperator{\Gr}{Gr}
\DeclareMathOperator{\CM}{\scal^{\text{CM}}}
\DeclareMathOperator{\rec}{rec}
\DeclareMathOperator{\Stab}{Stab}
\DeclareMathOperator{\SO}{SO}
\DeclareMathOperator{\Cl}{Cl}
\DeclareMathOperator{\gen}{gen}
\DeclareMathOperator{\spn}{spn}
\DeclareMathOperator{\shi}{Shi}
\DeclareMathOperator{\picard}{Pic}
\newcommand{\cala}{\mathscr A}
\newcommand{\calu}{\mathcal U}
\newcommand{\calc}{\mathcal C}
\newcommand{\calf}{\mathcal F}
\newcommand{\calg}{\mathscr G}
\newcommand{\call}{\mathscr L}
\newcommand{\calo}{\mathscr O}
\newcommand{\cals}{\mathscr S}
\newcommand{\scal}{\mathcal S}
\newcommand{\calt}{\mathscr T}
\newcommand{\calw}{\mathcal W}
\newcommand{\gota}{\mathfrak a}
\newcommand{\goto}{\mathfrak o}
\newcommand{\gotm}{\mathfrak m}
\newcommand{\gotn}{\mathfrak n}
\newcommand{\gotp}{\mathfrak p}
\renewcommand{\ge}{\geqslant}
\renewcommand{\le}{\leqslant}
\newcommand{\ndiv}{\hspace{-4pt}\not|\hspace{2pt}}
\newcommand{\interior}[1]{%
	{\kern0pt#1}^{\mathrm{o}}%
}
\providecommand{\customgenericname}{}
\newcommand{\newcustomtheorem}[2]{%
	\newenvironment{#1}[1]
	{%
		\renewcommand\customgenericname{#2}%
		\renewcommand\theinnercustomgeneric{##1}%
		\innercustomgeneric
	}
	{\endinnercustomgeneric}
}
\numberwithin{equation}{section}
\title{Equidistribution of CM points  on  Shimura Curves and ternary theta series}
\author{Francesco Maria Saettone}
\address{Department of Mathematics, Weizmann Institute of Science, Israel}
\email{francesco.saettone@weizmann.ac.il}
\begin{document}
	
	\maketitle
	
	\begin{abstract}
		We prove an equidistribution statement for the reduction of Galois orbits of CM points on the special fiber of a Shimura curve over a totally real field, considering both the split and the ramified case. The main novelty of the ramified case consists in the use of the moduli interpretation of the Cerednik--Drinfeld uniformisation. Our result is achieved by associating to the reduction of CM points certain Hilbert modular forms of weight $3/2$ and by analyzing their Fourier coefficients. Moreover, we also deduce the Shimura curves case of the integral version of the Andr\'e--Oort conjecture.
	\end{abstract}
	
	\tableofcontents
	
	\section{Introduction}
	
In \cite{jk} Jetchev and Kane proved an equidistribution result for the reduction of Galois orbits of Heegner points in a modular curve with both the conductor and the discriminant going to infinity. In this work we generalize their result to quaternionic Shimura curves over a totally real number field $F$, where we also allow the quaternion algebra both to be split and to ramify at the prime at which the reduction takes place. 
\\

Since the beginning of this century, the equidistribution of the reduction of Galois orbits of CM points has been a fruitful area of research. In \cite{michel}, Michel gave a subconvexity bound for certain Rankin--Selberg $L$-functions and used this bound to prove an equidistribution property for Galois orbits of supersingular elliptic curves. His result was recently generalized to a simultaneous equidistribution in \cite{aka}, exploiting dynamics and ergodic theory. On the other hand, Cornut and Vatsal in \cite{cv} proved that the reductions of CM points (modulo a non-split prime in a CM extension of $F$) of a Shimura curve are equidistributed in the supersingular locus. This was obtained by the mean of Ratner's theorem, with the noticeable application consisting of the (generalized) Mazur's conjecture on the non-triviality of higher Heegner points and on the non-vanishing of central values of automorphic $L$-functions. In \cite{fms}, I proved a companion equidistribution on the special fiber of a Shimura curve attached to some ramified primes. Moreover, in \cite{xz} a variant of these results for (Hecke orbits of) PEL Shimura varieties is treated. Lastly, Jetchev and Kane in \cite{jk} partially generalized \cite{cv} for modular curves. 

For a overview on analogous recent $p$-adic equidistributions, we refer to \cite{dis} and its citation orbit. In the rest of this introduction, we concisely state our main result, the idea of its proof, and a diophantine application.
\\

Let $B$ be an indefinite quaternion algebra over $F$. Let also $K$ be a CM extension of $F$. In this work we deal with both the following situations:
\begin{enumerate}
	\item $B$ is ramified at $v$ and $v$ either ramifies or is inert in $K$;
	
	\item $B$ is unramified at $v$ and $v$ is inert in $K$.
\end{enumerate}

For the sake of clarity, we underline that in the setting of Shimura curves CM points are sometimes called Heegner points, in particular in the case of modular curves. We follow the terminology of CM point to differentiate between the points on the Shimura curve and the corresponding points on an elliptic curve, which we do not discuss.
\\

Let $\{s_1,...,s_h\}$ denote either the supersingular or superspecial locus\footnote{Note that these two sets do not have the same cardinality.} of the special fiber of a Shimura curve. The moduli interpretations of its local integral models at $v$ allow us to consider the endomorphism ring $\End(s_i)$ of the supersingular point, and to define $w(s_i)$ as the number of units modulo $\{\pm1\}$ of $\End(s_i)$. In the superspecial case, the analogous $w(s_i)$ is defined through the construction of Section \ref{Grosslattice}. 
\\Let $\diamond\in\{\text{ss},\text{ssp}\}$, and define $\mu^{\diamond}$ be the normalized counting measure on the  supersingular (respectively, superspecial) locus $\{s_1,...,s_h\}$ of special fiber of the Shimura curve given by
\[
\mu^{\diamond}(s_i)=\dfrac{w(s_i)^{-1}}{\sum_{j=1} ^{h}w(s_j)^{-1}}\;.
\]
Moreover, in the case of $B_v$ ramified, let also $\mu_{\text{in}}^{\text{ssp}}$ be the normalized counting measure on  the set of irreducible component $\{c_1,...,c_k\}$, given by
\[
\mu_{\text{in}}^{\text{ssp}}(c_i)=\dfrac{w(c_i)^{-1}}{\sum_{j=1}^k w(c_j)^{-1}}
\]
where  $w(c_i)=\#\End(c_i)/2$ is the $\mathit{weight}$ of $c_i$. Here by $\End(c_i)$ we mean the endomorphisms of the formal group attached to the reduction of a CM point modulo an inert prime, which lands on the connected component $c_i$.

The main result of our work goes as follows.

\begin{customthm}{A}\label{theoremA}
	 The reductions at $v$ of the Galois orbits of CM points are equidistributed for the discriminants and the conductors varying, i.e., for their absolute norms going to infinity:
	 \begin{itemize}
	 	\item in the supersingular locus of the Shimura curve with respect to $\mu^{\text{ss}}$ for $B_v\simeq M_2(F_v)$  and $v$ inert or ramified in $K$;
	 	
	 	\item  in the superspecial locus of the Shimura curve with respect to $\mu^{\text{ssp}}$ for $B_v$ ramified and $v$ ramified in $K$;
	 	
	 	\item in the smooth locus of the Shimura curve with respect to $\mu_{\text{in}}^{\text{ssp}}$ for $B_v$ ramified and $v$ inert in $K$.
	 \end{itemize}
	 
\end{customthm}

This theorem gives a generalization of \cite{cv}, \cite{fms} and, of course, of \cite{jk}. Stricto sensu, this generalization for \cite{cv} and \cite{fms} is only partial: in fact, although we do not fix the discriminant, we  only manage to reduce by a single prime, losing the simultaneous reduction of the two aforementioned works.

	\begin{figure}
		\centering
		\includegraphics[width=0.5\linewidth]{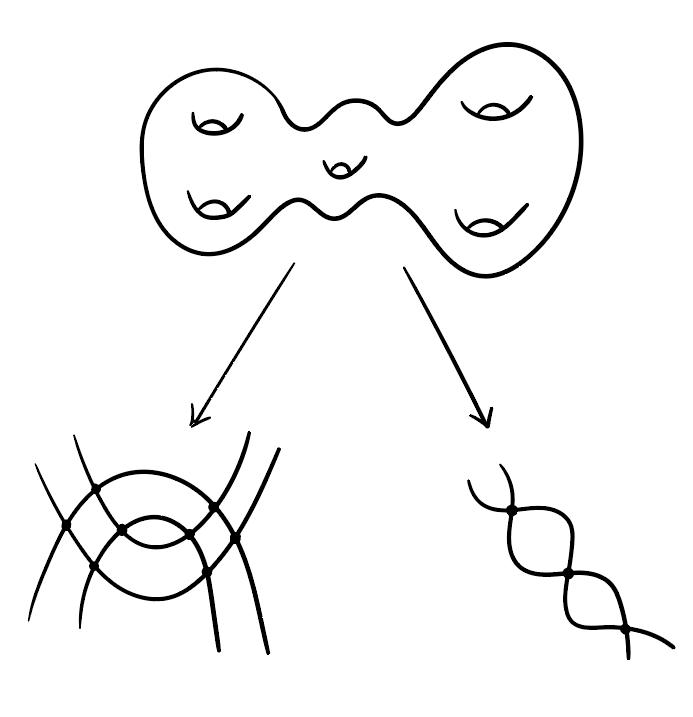}
		\caption{The illustration (drawn with the help of Francesco Beccuti) represents the Shimura curve over $\Q$ of discriminant $77$ and maximal level structure, whose CM points reduced modulo $7$ land in the superspecial locus (on the left) and to the supersingular locus for a prime $p\neq 7,11$. Note that on the right hand side the two curves should intersect $6$ times, since the special fiber is of genus $5$.}
	\end{figure}

Let us briefly sketch the main objects and techniques we make use of. 
\\We begin by describing the special points of the indefinite and definite Shimura curves, both as adelic double quotients as in \cite{sz} and in terms of their moduli interpretations. In order to give these two descriptions also for their reduction in the special fiber at $v$, we turn to study the local integral models of the Shimura curve at $v$, for $B_v$ unramified and ramified respectively. This changes dramatically their geometry and consequently the arithmetic of their moduli problems. In particular, in the ramified case we exploit the Cerednik--Drinfeld uniformisation as in \cite{bz} and the related moduli intepretation of the Drinfeld plane in terms of formal $\calo_{B_v}$-modules. This is the main geometric innovation in the equidistribution context. On the other hand, if $B_v\simeq M_2(F_v)$, only a local uniformisation in terms on Lubin--Tate spaces is available, and we essentially use the moduli interpretation of \cite{car}. All of this eventually allows us to state a correspondence between the set of CM points $x$ of  fixed discriminant and conductor reducing to a supersingular or superspecial point $s$ and the set of (conjugacy classes of) optimal embeddings $\End(x)$ into $\End(s)$. This correspondence has a ``numerical" incarnation in the formula of Lemma \ref{lem2.7}.

Subsequently, we discuss Hilbert modular forms of half-integral weight and their automorphic counterparts, so to exploit an adelic setting. To a supersingular or superspecial point $s$, we associate an Eichler order $R_s$ and then the so called Gross lattice, that is, a ternary quadratic form $Q_s$ attached to $s$. Since at least Jacobi, quadratic forms and automorphic forms are allied subjects, and in fact $Q_s$  gives rise to a theta series of weight $3/2$. Such theta series were introduced in Gross' seminal paper \cite{gr}, and they formed already a key idea in \cite{elkies} and similarly in \cite{jk} where the authors exploited a subconvexity bound on their Fourier coefficients. 
Now, the set of the optimal embeddings previously introduced is in correspondence with primitive representations of the discriminant of $x$ by (the adelization of) $Q_s$. Next step, which was the main technical novelty of \cite{jk}, consists in the analysis of the Whittaker--Fourier coefficients of the (automorphic) theta series attached to the genus and the spinor genus of $Q_s$. In particular, we prove that, under certain conditions, the genus and spinor genus mass, as defined in (\ref{genmass}), are equal. To prove this, we make use of some class number formula and of Vatsal's equidistribution in Lemma \ref{vat}. Moreover, we also need to adapt to totally real fields some properties of those theta series. To do so, we decided to operate in an automorphic setting. We thus exploit the beautiful results of Gelbart--Piatetsky-Shapiro \cite{gps1} and \cite{gps}: these works develop an automorphic, representation theoretic Shimura correspondence, which can be viewed as a first case of the theta-correspondence. 
	
The last crucial ingredient for Theorem A follows from the subconvexity bound of \cite{bh} on the Whittaker--Fourier coefficients of cuspidal automorphic forms of weight $3/2$. Due to Brauer--Siegel's lower bound for class numbers, our results are ineffective. However, this issue might be solved by assuming the Generalized Riemann Hypothesis  or by the (weaker assumption of the) non-existence of Landau--Siegel zeros, at the price of obtaining a conditional result. For the sake of simplicity, consider now the case $F=\Q$. Let $D$ be the discriminant of an imaginary quadratic extension of $\Q$ with class number $h(D)$. Then by one of the two assumptions above Hecke showed there is an effective constant $c>0$ such that $h(D)>c\sqrt{|D|}(\log|D|)^{-1}$, which gives the desired effective, albeit conditional, lower bound.
\\

Lastly, in Section \ref{sectionAO}, as a complement of our equidistribution result, we deduce the integral version of the Andr\'e--Oort conjecture in arithmetic pencil introduced in \cite{aoexp} in the case of Shimura curves over $\Q$. While the classical Andr\'e--Oort conjecture concerns the Zariski topology of Shimura varieties, this variant considers their integral models, which are defined over $\Spec(\calo_E)$, where $E$ is their reflex field, hence the pencil over the arithmetic base $\Spec(\calo_E)$.

The strategy closely follows the lines of \cite{ao}, where the case of the modular curve $Y(1)$ over $\Z$ is established. We underline that our framework encompasses both the modular and the Shimura case.
\\

Let us conclude this preamble with a few  words on future directions. There has recently been a tremendous activity on the Mixing Conjecture of Michel--Venkatesh, especially via the joining theorems of Einsiedler and Lindenstrauss, as we invite the interested reader to go through  \cite{akabourbaki} for an introduction to these powerful ideas. In particular, the work of Khayutin \cite{kha} and Blomer--Brumley--Khayutin \cite{brumley} proves, under some technical assumptions\footnote{Varying from the GRH and the Ramanujan conjecture to some mild, albeit significant, splitting condition in the CM extension.}, the Mixing Conjecture, with the noticeable consequence of the equidistribution of Galois orbits of CM points on a product of (indefinite) Shimura curves over $\Q$ which yields, which implies the Andr\'e--Oort conjecture for such a product, as showed in \cite[Remark 1, p.3660]{swzequi} and explained in \cite[Section 2.2]{brumley}.  It would thus be very interesting to extend Theorem \ref{theoremA} to a product of Shimura curves and explore diophantine applications as the aforementioned integral version of the Andr\'e--Oort conjecture.
	
\subsection{Notation and Conventions}

	We list some notations used throughout this work:
	\begin{itemize}
	
		\item we denote by $\mathbb{H}$ the classical Hamilton's quaternions;
		\item we denote by $\A$ the ring of adeles over $\Q$, and $\A_f$ the finite adeles;
		\item for a totally real number field $F$, let $F^+$ denote its totally positive elements;
		\item for a quaternion algebra $B$ over $F$, we denote by $\Ram(B)$, the set of places of $F$ where $B$ ramifies; the same symbol, with $f$ or $\infty$ subscript, denotes the restriction to finite or archimedean places respectively;
		\item for a set $S$, we denote by $\mathbf{1}_S$ the characteristic function of $S$;
		\item for a local field $F_v$, we denote by $\breve{F}_v$ the completion of the maximal unramified extension;
		\item let $G^{\text{ab}}_K$ denote the abelianization of the absolute Galois group of $K$;
		\item for a set $X$, we denote by $\calc(X,\C)$ the set of continuous functions on it;
		\item  for an $F$-group scheme $G$, we denote its automorphic quotient by $[G]:=G(\A)/G(F)$; 
		\item we denote by $N$ the absolute norm of a number field, while $\nr$ denotes the reduced norm of a quaternion algebra;
		\item for an algebra $A$, we write $\pic(A)$ instead of $\pic(\Spec A)$;
		\item for any two functions $f,g$, we use the notations $f\ll g$ and $f=O(g)$ interchangeably.
	\end{itemize}

\subsubsection*{Acknowledgments}

It is a pleasure to thank Professor Daniel Disegni for his guidance with my doctoral thesis, which this article is part of. I am also grateful to Zev Rosengarten for useful conversations and correspondence. This work was supported by ISF grant 1963/20 and BSF grant 201825, by ERC (SharpOS, 101087910), and by the ISF grant  2067/23.

\section{Shimura Curves}

\subsection{Quaternion Algebras and Ramification}

For a totally real number field $F$ of degree $d$ over $\Q$, let $B$ be a quaternion algebra  over $F$.
In this section, after imposing some conditions on the ramification of the quaternion algebra $B$ at some places of $F$, we associate two curves to $B$, according to the fact that $B$ satisfies one of the following conditions:
\begin{itemize}
	\item  there is a unique real place $v$ of $F$ such that $B_v=B\otimes F_v\simeq M_2(F_v)$, i.e., $B$ is $\mathit{indefinite}$;
	
	\item for every real place $B_v$ is non-split, i.e., $B$ is $\mathit{definite}$.
\end{itemize}

\subsubsection{Ramification Setting}

From now on, let $K$ denote a CM quadratic extension of $F$, and suppose that $B$ is split by $K$, i.e., $K_v$ is a field for every finite place of $F$ where $B$ ramifies. Fix an embedding $\rho\colon K\hookrightarrow B$ over $F$. Let also $v$ be a finite place of $F$. In what follows we consider the following two situations:
\begin{enumerate}
	\item $B$ is ramified at $v$ and $v$ ramifies in $K$;
	
	\item $B$ is unramified at $v$ and $v$ is inert in $K$.
\end{enumerate}

Indeed, this does not rule all the possible cases out. For instance, in $(1)$, the prime $v$ could also be inert. The geometric interpretation of this case is  described in \cite[Prop.2.13]{fms}. At any rate, at the level of the equidistribution both possibilities in $(1)$ would proceed in parallel, so we decided to focus exclusively on $v$ ramifying in $K$. 

\subsubsection{Indefinite Case}

We first consider $B$ indefinite at one archimedean place of $F$, which we denote by $\tau_1\colon F\hookrightarrow \R$. Then we can fix an isomorphism $B\otimes\R\simeq M_2(\R)\oplus\mathbb{H}^{d-1}$.

Let $G$ be the reductive group over $\Q$ whose functor of points sends a commutative $\Q$-algebra $A$ to
\begin{equation}\label{gr}
G(A)=(B\otimes_\Q A)^\times.
\end{equation}
This means that $G=\text{Res}_{F/\Q}B^\times$ and there is a real embedding $h_0\colon \text{Res}_{\C/\R}\G_m\rightarrow G_\R$ with trivial coordinates at $\tau_i$ for $i\ge2$.
\\ The isomorphism $B\otimes\R\simeq M_2(\R)\oplus\mathbb{H}^{d-1}$ induces a map $B^\times\rightarrow \GL_2(\R)$ which gives an action of $B^\times$ on the conjugacy class of $h_0$, which is isomorphic to $\mathscr{H}=\C-\R$, by M\"obius transformations. 

For any open subgroup $U$ of $G(\A_f)$ which is compact modulo $\widehat{F}^\times$, we consider
\begin{equation}\label{anShim}
\scal_U^{\text{an}}:=G(\Q)\backslash \mathscr{H}\times G(\A_f)/U\cup \{\text{cusps}\}
\end{equation}
whose canonical model is the Shimura curve $\scal_U$ over $F$. It is a proper and smooth curve over its reflex field $F$. Note that the set $\{\text{cusps}\}$ is non-empty if and only if $B=M_2(\Q)$. 
If $F=\Q$, then $\scal_U$ is a coarse moduli space of QM abelian surfaces\footnote{A.k.a. false elliptic curves, i.e.,  $A$ is an abelian scheme over $R$ of relative dimension $2$.}, see \cite{buzzard}. 
\\On the other hand, if $d>1$, Shimura curves do not have a natural moduli interpretation. Despite so, by the work of Carayol \cite{car}, we have that $\scal_U$ has a finite map to another Shimura curve $\scal_{U'}$ which, if $U'$ is small enough, is a moduli space of QM abelian varieties with a $U'$-level structure and a polarization both compatible with the quaternionic multiplication (see  \cite[Prop.1.1.5]{sz2}). Since $\scal'_{U'_v}$ is a fine moduli scheme for such abelian schemes over schemes with level structures, there is a universal object of its moduli problem, called $\mathit{universal}$ abelian surface and denoted by $\mathscr{A}\rightarrow \scal'_{U'_v}$. For each geometric point $x=\Spec(E)$ of $\mathscr{A}$, the fiber $\mathscr{A}_x$ is a QM abelian surface with the above structure, defined over $E$.

The moduli interpretation yields the remarkable consequence that the Shimura curve has a proper\footnote{In the modular curves case one needs to add finitely many cusps for properness.} regular integral model $\cals_U$ over $\calo_F$.

\subsubsection{Definite Case}\label{defcase}

Let us now deal with a definite quaternion algebra. For the rest of this work fix a finite prime $\ell$ of $F$ such that $B_\ell\simeq M_2(F_\ell)$. Consider an Eichler order $R$ of level $\gotn$. An $\mathit{orientation}$ on $R$ consists of a morphism $\goto_\star\colon R\otimes k_\ell\rightarrow k_{\star}$, for $\star\in\{\ell,\ell^2\}$, where $k_\star=k_\ell$ if $\ell|\gotn$ and $k_\star=k_{\ell^2}$ otherwise. We refer to \cite[Section 1.1]{bd} for an exhaustive description of such orientations. 

We denote by $\Cl(B)$ the set of all conjugacy classes of oriented Eichler orders of level $\gotn$ in $B$, and we recall that it can be viewed adelically via the following bijection
\begin{equation}\label{shimuraset}
\Cl(B)\simeq G(\Q)\backslash G(\A_f)/\widehat{R}^\times,
\end{equation}
where $\widehat{R}^\times$ denotes the adelization of $R$ and $G$ is the reductive group defined in (\ref{gr}) adapted to the definite setting. By strong approximation in $G(\A_f)$, it follows that $\Cl(B)$ is finite.
\\

Following \cite[p.131]{gr}, we now attach to $B$ an algebraic curve. Let $\mathscr{P}$ be the conic curve over $\Q$ whose functor of points send a commutative $\Q$-algebra $A$ to
\[
\mathscr{P}(A)=\{x\in B\otimes A : x\neq 0,\; \nr(x)=tr(x)=0 \}/A^\times.
\]
We define the $\mathit{Gross}$ $\mathit{curve}$\footnote{Also known as $\mathit{definite}$ Shimura curve, as in \cite[p.420]{bd}.} of level $R$ as
\[
X_R=G(\Q)\backslash \mathscr{P}\times G(\A_f)/\widehat{R}^\times,
\]
where $G(\Q)=\Aut(\mathscr{P})$ acts by conjugation on $\mathscr{P}$. Given representatives $(g_i)_{i=1}^r$ of $\Cl(B)$ via (\ref{shimuraset}), we define $\Gamma_i:=g_i\widehat{R}g_i^{-1}\cap G(\Q)$, which is a finite subgroup of $G(\Q)$. We thus obtain a collection of genus zero conic curves\footnote{Which are in general not isomorphic to $\p^1$.} $Y_i=\Gamma_i\backslash\mathscr{P}$ defined over $\Q$, and we write the Gross curve as
\[
X_R=\bigsqcup_{i=1}^r Y_i.
\]

The set of $K$-rational points of the conic curve $\mathscr{P}$ is identified with $\Hom(K,B)$, as explained in \cite[p.131]{gr}. By this identification, we obtain that
\[
X_R(K)=\{(f,[R]) : f\in\Hom(K,B),\;[R]\in\Cl(B)\}.
\]
Lastly, since the connected components $Y_i$ have genus zero, the Picard group $\picard(X_R)$ is a free $\calo_F$-module with a basis indexed by $\Cl(B)$.

\subsection{Special Points and Quaternion Algebras}

\subsubsection{Indefinite Case}

For any $F\otimes\A_f$-embedding $\tau\colon K\otimes\A_f\hookrightarrow B\otimes\A_f$, the group $\tau(K^\times)$ acts on the Shimura curve $\scal_U$. We thus define the scheme of $\mathit{CM}$ $\mathit{points}$ by $(K,\tau)$ as the fixed-point (affine) subscheme $\scal_U^{\tau(K^\times)}$. 
\\In other words, a point $z$ of $\scal_U$ is a CM point by $K$ if it can be represented by $(z_0,g)\in \mathscr{H}\times G(\A_f)$ via (\ref{anShim}), where $z_0$ is the unique point fixed by $K^\times$. By the work of Shimura, it is a finite subscheme of $\scal_U$ defined over $K^{\text{ab}}$. By taking the union of $\scal_U^{\tau(K^\times)}$ over all pairs $(K,\tau)$, we obtain the CM ind-subscheme of $\scal_U^{\text{CM}}$. The absolute Galois group of $K$, which we denote by $G_K$, acts on $\scal_U^{\text{CM}}$ via
\[
\sigma.(x_0,g)=(x_0,\rec_K(\sigma)g)
\]
where $\rec_K$ is Artin's reciprocity map. If we consider CM points of conductor $c$, this action factors through $\text{Gal}(H[c],K)$, where $H[c]$ is the ring class field of $K$ of conductor $c$.
\\

Consider now an order $R$ of $B$ of type\footnote{I.e., an order of discriminant $\gotn$ which contains $\rho(\calo_K)$.} ($\gotn$, $K$) as constructed in \cite[Section 1.5.1]{sz2} and the corresponding  Shimura curve of level $\widehat{F}^\times\widehat{R}^\times$. Then for $z$ a CM point by $K$ we consider 
\[
\End(z):=g\widehat{R}^\times g^{-1}\cap \rho(K)
\]
which is an order in $K=\rho(K)$ independent of the choice of $g\in G(\A_f)$. The $\mathit{conductor}$ of $z$ is defined as the unique $\calo_F$-ideal such that
\[
\End(z)=\calo_F + c\calo_K.
\]
Moreover, the discriminant of $\End(z)$ is of the form $Dc^2$, where $D$ is the discriminant of $K$ relative to $F$ and $c$ the conductor.
\\

We say that a CM point $z$ corresponding to $(z_0,g)$ has an $\mathit{orientation}$ if, for a finite prime $v$ coprime with $c$, the morphism $g^{-1}\rho g$ is $R_v^\times$-conjugated to $\rho$ in $\Hom(\calo_{K_v},R_v)/R_v^\times$.

\begin{lem}\label{numberGalorbits}
	Let $c$ be coprime with $\gotn$. Then there are $2^{\omega(\gotn)}$ Galois orbits of CM points of conductor $c$, where $\omega(\gotn)$ is the number of prime factors of $\gotn$. Moreover, every such orbit has cardinality equal to $\#\picard(\calo_c)$.
\end{lem}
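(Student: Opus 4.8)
The plan is to reduce the statement to the adelic description of CM points combined with the local theory of optimal embeddings.

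\emph{Step 1: adelic parametrization.} A CM point of conductor $c$ is represented by a pair $(z_0,g)$ with $z_0\in\mathscr{H}$ the unique point fixed by $\rho(K^\times)$ and $g\in G(\A_f)$; since the stabiliser of $z_0$ in $G(\Q)=B^\times$ is precisely $\rho(K^\times)$, the set $\mathrm{CM}_c$ of CM points of conductor $c$ is in bijection with
\[
\rho(K^\times)\backslash\{g\in G(\A_f)\mid g\widehat R g^{-1}\cap\rho(\widehat K)=\rho(\widehat{\calo_c})\}/\widehat R^\times,
\]
where the condition on $g$ is exactly $\End(z)=\calo_c$, i.e.\ the requirement that $g^{-1}\rho g$ define an optimal embedding of $\calo_{c,v}$ into $R_v$ at every finite place $v$ (here I use the model of level $\widehat R^\times$, consistently with the definition of $\End(z)$).

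\emph{Step 2: the size of a Galois orbit.} By Shimura's reciprocity law $G_K$ acts on $\mathrm{CM}_c$ through $\rho(\widehat K^\times)$ by left translation, and the action factors through $\Gal(H[c]/K)\cong\picard(\calo_c)$, which adelically is $\widehat K^\times/K^\times\widehat{\calo_c}^\times$. I then compute the stabiliser of a fixed $[g]$: an element $t\in\widehat K^\times$ fixes $[g]$ iff $\rho(t)g\in\rho(K^\times)g\widehat R^\times$; unwinding this gives $\rho(\gamma^{-1}t)=g r g^{-1}$ for some $\gamma\in K^\times$ and $r\in\widehat R^\times$, hence $\rho(\gamma^{-1}t)\in g\widehat R^\times g^{-1}\cap\rho(\widehat K^\times)=\rho(\widehat{\calo_c}^\times)$ by the conductor condition, so $t\in K^\times\widehat{\calo_c}^\times$; the reverse inclusion is immediate. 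Therefore $\picard(\calo_c)$ acts freely on $\mathrm{CM}_c$, and each orbit has cardinality $\#\picard(\calo_c)$.

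\emph{Step 3: the number of orbits.} The Galois orbits coincide with the $\rho(\widehat K^\times)$-orbits, so their number is $\#(\widehat K^\times\backslash\{g:\ldots\}/\widehat R^\times)=\prod_v\nu_v$, where $\nu_v$ is the number of $R_v^\times$-conjugacy classes of optimal embeddings of $\calo_{c,v}$ into $R_v$ (the left action of $K_v^\times$ being trivial on embeddings, as $K_v$ is commutative). At $v\nmid\gotn$ the local order $R_v$ is maximal and $\nu_v=1$, optimal embeddings into a maximal order being unique up to conjugacy; this holds also at the primes $v\mid c$, since $\picard(\calo_{c,v})$ is locally trivial. At $v\mid\gotn$, coprimality of $c$ and $\gotn$ forces $\calo_{c,v}=\calo_{K_v}$, and Hijikata's local embedding formula gives $\nu_v=2$, the two classes being interchanged by the two residual orientations $\goto_\star\colon R\otimes k_\ell\to k_\star$ (concretely, the two $k_\star$-algebra maps out of $\calo_{K_v}\otimes k_v$). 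Hence $\#\mathrm{CM}_c=2^{\omega(\gotn)}\cdot\#\picard(\calo_c)$, distributed into $2^{\omega(\gotn)}$ Galois orbits each of size $\#\picard(\calo_c)$, as claimed.

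\emph{Main obstacle.} The delicate part is the local computation in Step 3: establishing $\nu_v=2$ for every $v\mid\gotn$ via Eichler's and Hijikata's optimal embedding numbers, while keeping careful track of the precise equivalence relation (conjugation by $R_v^\times$, as it enters the adelic double coset, rather than by the full normaliser), together with the nonemptiness of $\mathrm{CM}_c$ — the existence of an optimally embedded $\calo_c$ of the prescribed orientation — which is exactly why $R$ is taken of type $(\gotn,K)$ and $B$ is assumed split by $K$. The freeness in Step 2 rests on Shimura's reciprocity law and some bookkeeping with the centre, but is otherwise routine.
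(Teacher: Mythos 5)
Your route is genuinely different from the paper's: the paper simply invokes the Atkin--Lehner group $\calw$, of order $2^{\omega(\gotn)}$, together with Gross's theorem that $\picard(\calo_c)\times\calw$ acts freely and transitively on the CM points of conductor $c$, the $\picard(\calo_c)$-orbits being the orientation classes. Your Steps 1--2 in effect reprove the orientation-free half of that statement adelically, and they are essentially sound: the stabiliser computation does give a free $\picard(\calo_c)$-action and hence orbits of size $\#\picard(\calo_c)$, up to the bookkeeping with $\widehat{F}^\times$ in the level that you yourself flag (the paper glosses over the same point).

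The gap is in Step 3, which is exactly the half of Gross's statement your argument must reprove and which you assert rather than establish. ``Hijikata's formula gives $\nu_v=2$'' at every $v\mid\gotn$ is not correct as a uniform claim. For an honest Eichler order of level $v\parallel\gotn$ with $v\nmid c$, Hijikata gives $\nu_v=1+\left(\frac{K/F}{v}\right)\in\{0,1,2\}$, which equals $2$ only when $v$ splits in $K$; and at $v\in\Ram_f(B)$ (such $v$ always divide $\gotn$) the number of optimal embeddings of $\calo_{K_v}$ into the maximal order modulo $\calo_{B_v}^\times$-conjugacy is $1-\left(\frac{K/F}{v}\right)$: it is $2$ when $v$ is inert in $K$, but $1$ when $v$ ramifies in $K$, because then a uniformizer of $K_v$ is a uniformizer of $B_v$, so $B_v^\times=K_v^\times\calo_{B_v}^\times$ and the Galois-twisted embedding is already unit-conjugate to the original. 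The ramified--ramified situation is precisely the paper's standing case (1), where your product formula would output $2^{\omega(\gotn)-1}$ rather than $2^{\omega(\gotn)}$. Moreover, at $v\mid\gotn$ with $B_v$ split but $v$ non-split in $K$, the local component of an order of type $(\gotn,K)$ is not an Eichler order (this is the point of Zhang's construction, not merely the non-emptiness you mention), so Hijikata's formula does not apply as cited and the value of $\nu_v$ has to be computed separately for orders of the form $\calo_{K_v}+\lambda\calo_{K_v}$. In short: the orbit-size statement is proved, but the count of orbits --- the only non-formal content of the lemma --- rests on a local embedding-number claim that is unproven and, at primes ramified in both $B$ and $K$, false as stated; this is exactly the content the paper delegates to Gross, whose simple-transitivity result is proved under the hypothesis that the discriminant of $K$ is prime to $\gotn$.
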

\begin{proof}
	Let us recall that the Shimura curve of level $\widehat{R}^\times$ has an action by the Atkin--Lehner group
	\[
	\calw=\{b\in G(\A_f) : b^{-1}\widehat{R}^\times b =\widehat{R}^\times \}/\widehat{R}^\times
	\]
	which has $2^s$ elements, where $s$ is the number of prime factors of $\gotn$. This yields an action of $\calw$ on $\scal_U^{\text{CM}}$ which preserves the conductor.
	By the work of Gross \cite{gr}, the action of $\picard(\calo_c)\times \calw$
	on the set of CM points of conductor $c$ is free and transitive and the $\picard(\calo_c)$-orbits of CM points correspond to sets of oriented CM points. This implies that there are $2^{\omega(\gotn)}\cdot \#\picard(\calo_c)$ such points of conductor $c$, and so we conclude.
\end{proof}

Next description characterizes CM points in terms of an adelic double quotient. Let $T$ be the $\Q$-rational torus in $G$.

\begin{lem}\label{adelicCM}
	The set of CM points in $\scal_U$ is in bijection with
\[
		 T(\Q)\backslash G(\A_f)/U.
\]	
\end{lem}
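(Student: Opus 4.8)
The plan is to identify CM points with the fixed locus of the torus action and then unwind the adelic description (\ref{anShim}) of the Shimura curve. Recall from the excerpt that a point $z$ of $\scal_U$ is CM by $K$ precisely when it can be represented by a pair $(z_0,g)\in\mathscr{H}\times G(\A_f)$ in which $z_0$ is the unique point of $\mathscr{H}$ fixed by the action of $\tau(K^\times)$ through the chosen embedding; here $T=\mathrm{Res}_{F/\Q}K^\times$ viewed inside $G$ via $\rho$. First I would fix that distinguished point $z_0\in\mathscr{H}$, which exists and is unique because the real torus $T(\R)$ (up to center) acts on $\mathscr{H}=\C-\R$ with a single fixed point in each half-plane, and observe that $\Stab_{G(\R)}(z_0)$ contains $T(\R)$. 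This reduces the problem: every CM point has a representative of the shape $(z_0,g)$, so the set of CM points is the image of $\{z_0\}\times G(\A_f)$ in $G(\Q)\backslash\mathscr{H}\times G(\A_f)/U$.

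Next I would analyze when two such representatives $(z_0,g)$ and $(z_0,g')$ give the same point of $\scal_U$. By definition of the double quotient, this happens iff there exist $\gamma\in G(\Q)$ and $u\in U$ with $\gamma z_0=z_0$ and $\gamma g u = g'$. The key point is that the elements of $G(\Q)$ fixing $z_0$ in $\mathscr{H}$ are exactly those lying in $T(\Q)$: indeed $G(\Q)\cap\Stab_{G(\R)}(z_0)$, modulo center, is the set of rational points of the algebraic stabilizer, which is the maximal torus through $z_0$, namely $T$. (This uses that $B$ is a division algebra or, in the split case $B=M_2(\Q)$, that $z_0\notin\p^1(\Q)$ so its stabilizer is a non-split — hence anisotropic mod center — torus, which is $T$.) Therefore the equivalence relation on $\{z_0\}\times G(\A_f)$ induced from $\scal_U$ is exactly: $g\sim g'$ iff $g'\in T(\Q)\,g\,U$. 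Passing to the quotient gives the claimed bijection
\[
\scal_U^{\mathrm{CM}}\;\xrightarrow{\ \sim\ }\;T(\Q)\backslash G(\A_f)/U .
\]

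I should be a little careful about two bookkeeping issues. The first is the cusp contribution in (\ref{anShim}): cusps only occur when $B=M_2(\Q)$, and a cusp is represented by a point of $\p^1(\Q)=\partial\mathscr{H}$, never by the interior point $z_0$, so cusps are not CM points and do not interfere. The second is the two connected components of $\mathscr{H}=\C-\R$: one must check that the complex conjugate fixed point $\bar z_0$ in the lower half-plane is absorbed correctly, i.e. that moving between the two components is accounted for by the $G(\Q)$-action (an element of $G(\Q)$ of negative reduced norm at $\tau_1$ swaps them while conjugating $T(\R)$ to itself), so that choosing $z_0$ rather than $\{z_0,\bar z_0\}$ loses nothing. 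Both are routine but should be mentioned.

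The main obstacle is the precise determination of $G(\Q)\cap\Stab_{G(\R)}(z_0)$, i.e. verifying that a rational element of $B^\times$ fixing $z_0$ must lie in $\rho(K^\times)$ and not in some larger rational subgroup — equivalently that the centralizer in $B$ of such an element is exactly $K$. This is where the hypothesis that $\rho\colon K\hookrightarrow B$ realizes $K$ as a maximal subfield is used: the centralizer of $z_0$ in $B\otimes\R\simeq M_2(\R)\oplus\mathbb{H}^{d-1}$ is $(\C\oplus\mathbb{H}^{d-1})^\times$, whose intersection with $B^\times$ is a rational torus containing $\rho(K^\times)$ and of the same rank, hence equal to it since $K$ is maximal. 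Once this centralizer computation is in hand, the rest of the argument is a direct unwinding of definitions, and the bijection follows.
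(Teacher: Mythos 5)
Your argument is correct and is the standard unwinding of the double-coset description (\ref{anShim}); the paper itself offers no proof, simply citing \cite[5.2.5.2.2]{sz}, which proceeds along the same lines as what you wrote. The one step worth tightening is the identification $\Stab_{G(\Q)}(z_0)=T(\Q)$: instead of the ``same rank, hence equal'' comparison (a priori the intersection is only a subgroup, not visibly the points of a subtorus), note that $\Stab_{\GL_2(\R)}(z_0)$ is the unit group of the commutative algebra $\rho(K)\otimes_{F,\tau_1}\R\simeq\C$, which is its own centralizer in $M_2(\R)$; hence any $b\in B^\times$ fixing $z_0$ commutes with $\rho(K)$ after the injective map $B\hookrightarrow B\otimes_{F,\tau_1}\R$, therefore commutes with $\rho(K)$ in $B$, and so lies in $\rho(K)^\times=T(\Q)$ because a maximal subfield of a quaternion algebra is its own centralizer.
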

\begin{proof}
	See \cite[5.2.5.2.2]{sz}.
\end{proof}

\subsubsection{Definite Case}

Here we consider the definite case, i.e., special points living on some Gross curve, whose theory is considerably less convoluted than in the indefinite case.

For any order $\calo$ of $K$, we fix an orientation on it by choosing a morphism $\calo_\ell\rightarrow k_\star$ where $k_\star$ is as in Section \ref{defcase}. 

A $\mathit{Gross}$ point of conductor $c$ consists of a pair $(f,R)$ where $f\colon K\rightarrow R$ is an oriented optimal embedding, which means that $f(K)\cap R=f(\calo_c)$ where $\calo_c$ is taken up to conjugation by $G(\Q)$. Equivalently, Gross points are the image of 
$$\mathscr{P}(K)\times G(\A_f)/\widehat{R}^\times$$ in $X_R(K)$. This immediately implies the $K$-rationality of Gross points. Moreover, a Gross point $(f,R)$ represented by $(x,g)\in\mathscr{P}\times G(\A_f)$ has discriminant $D$ if and only if 
\[
f(K)\cap g\widehat{R}g^{-1}=f(\calo)
\]
is the image of the order of discriminant $D$. Thus, Gross points of discriminant $D$ correspond to equivalence classes modulo $R_i^\times$ of optimal embedding of $\calo\hookrightarrow R_i$ in each component $Y_i$. We denote the cardinality of the set of such equivalence classes by $h(\calo,R_i)$.  Let us denote by $\Gr(c)$ the set of Gross points of conductor $c$.

Set $T$ to be the $\Q$-rational torus in $G$. For any Gross point $(f,R)\in\Gr(c)$ the optimal embedding $f$ induces, by scalar extension, a map $T(\A)\rightarrow G(\A)$. This gives an action of the Picard group of $\calo_c$
\[
\picard(\calo_c)=T(\Q)\backslash T(\A)/\widehat{\calo}_c^\times 
\]
on $\Gr(c)$. By \cite[p.133]{gr}, this action is simply transitive.

Next result recalls how to describe Gross points as an adelic double quotient.

\begin{lem}\label{adelicgross}
	The set $\Gr$ of Gross points in $X_R$ is in bijection with 
    \[
    T(\Q)\backslash G'(\A_f)/\widehat{R}^\times.
    \]
\end{lem}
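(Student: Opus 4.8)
The plan is to trivialize the conjugation action on the conic by fixing the embedding $\rho$, in exact analogy with the indefinite case (Lemma~\ref{adelicCM}). By definition $\Gr$ is the image of $\mathscr{P}(K)\times G'(\A_f)/\widehat{R}^\times$ in $X_R(K)=G'(\Q)\backslash\mathscr{P}\times G'(\A_f)/\widehat{R}^\times$, where $G'(\Q)=B^\times$ acts diagonally (by conjugation on $\mathscr{P}$ and by left translation on $G'(\A_f)$), and where $\mathscr{P}(K)$ is identified with the set $\Hom(K,B)$ of $F$-algebra embeddings as recalled above. Since $K$ splits $B$, the Skolem--Noether theorem shows that $G'(\Q)$ acts transitively on $\Hom(K,B)$, with the fixed embedding $\rho$ as a representative.

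First I would compute $\Stab_{G'(\Q)}(\rho)$: it is the centralizer of $\rho(K)$ in $B^\times$, which by the double centralizer theorem in the quaternion algebra $B$ equals $\rho(K)^\times$, because $\rho(K)$ is a maximal commutative subfield and hence its own commutant. Under the identification of $T$ with $\mathrm{Res}_{F/\Q}\rho(K)^\times\subseteq G'$ this is precisely $T(\Q)$. Now, given $(x,g)\in\mathscr{P}(K)\times G'(\A_f)$, choose $\gamma\in G'(\Q)$ with $x=\gamma\rho\gamma^{-1}$; then $(x,g)$ and $(\rho,\gamma^{-1}g)$ represent the same point of $X_R(K)$, so every class in $\Gr$ has a representative $(\rho,g)$. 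Two such representatives $(\rho,g)$, $(\rho,g')$ coincide in $X_R(K)$ iff $g'=\gamma g u$ for some $\gamma\in G'(\Q)$ fixing $\rho$ and some $u\in\widehat{R}^\times$, i.e. iff $g'\in T(\Q)g\widehat{R}^\times$. Hence $g\mapsto[(\rho,g)]$ induces the desired bijection $T(\Q)\backslash G'(\A_f)/\widehat{R}^\times\xrightarrow{\ \sim\ }\Gr$. Alternatively, this double coset description is recorded at \cite[p.133]{gr}, of which the above is the one-line unwinding.

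The argument is essentially bookkeeping; the only genuine input, and hence the main --- though modest --- obstacle, is the stabilizer computation, where one must use that $B$ is a quaternion algebra and $\rho(K)/\rho(F)$ a separable quadratic field extension inside it, so that the centralizer of $\rho(K)$ does not exceed $\rho(K)^\times$. Compatibility of the left $G'(\Q)$-action with the right $\widehat{R}^\times$-action, and the fact that passing to the respective quotients is harmless, are formal.
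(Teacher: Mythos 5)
Your proof is correct, and it follows essentially the same route as the paper, which gives no argument of its own but simply cites \cite[Lemma 2.2]{jk}: the orbit--stabilizer unwinding you carry out (Skolem--Noether for transitivity of the conjugation action on $\mathscr{P}(K)\simeq\Hom(K,B)$, together with the double centralizer computation identifying the stabilizer of $\rho$ with $\rho(K)^\times=T(\Q)$) is exactly the standard argument behind that citation. No gaps.
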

\begin{proof}
	See \cite[Lemma 2.2]{jk}.
\end{proof}

\subsection{Reductions of Integral Models}\label{reductionintegralmodels}

In this section we consider the indefinite quaternion algebra $B$ exclusively.

\subsubsection{Universal $v$-Divisible Group}\label{universal}

Let $A$ be a complete Noetherian local ring of residue characteristic $p$, and let $v$ be a non-archimedean place of $F$. A $v$-divisible\footnote{Also known as $\mathit{Barsotti}$--$\mathit{Tate}$.} group $\calg$ of height $h$ over $A$ is the colimit of a tower of affine, finite flat group schemes $(\calg_n)_{n\ge 1}$ over $A$ of order $p^{nh}$ such that $\calg_n=\calg_{n+1}[v^n]$. Taking the connected and \'etale parts\footnote{Defined just by the latter group schemes $\calg_n$.} of $\calg$, denoted by $\calg^\circ$ and $\calg^{\text{\'et}}$, there is a short exact sequence
\[
0\rightarrow \calg^\circ\rightarrow\calg\rightarrow\calg^{\text{\'et}}\rightarrow 0
\]
which splits if $A$ is a perfect field. Moreover, let us recall that a $\mathit{formal}$ group is a group object in the category of formal schemes. Then there is an equivalence of categories between connected $v$-divisible groups and formal groups.
\\

We now assume that the level structure decomposes in the form $U=U^v\cdot U_v$ where $U^v$ is sufficiently small and $U_v\simeq\calo_{B_v}^\times$, namely the level structure is maximal at $v$.

The curve $\cals_{U_v}$ carries a universal $v$-divisible $\calo_{B_v}$-module $\calg$ which comes from the $v$-divisible group $\mathscr{A}[v^\infty]$ by taking the pull-back of $\mathscr{A}$ via the finite map $\cals_{U_v}\rightarrow \cals'_{U'_v}$. To describe $\calg$, choose an auxiliary quadratic field $F'$ as in \cite[p.33]{sz2} which is split at $v$ and fix an isomorphism $f\colon \calo_{F'_v}\simeq\calo_{F_v}\oplus\calo_{F_v}$. We therefore define
\[
\calg:= f^{-1}(0,1)\mathscr{A}[v^\infty].
\]
Let $\calo_{\text{un}}$ be any unramified quadratic extension of $\calo_{F_v}$ contained in $\calo_{B_v}$. By \cite[Prop.1.2.4]{sz2}, we have that $\calg$ is a $\mathit{special}$ $v$-divisible $\calo_{B_v}$-module, i.e., the induced action of $\calo_{B_v}$ on $\Lie(\calg):=\Lie(\calg^\circ)$ makes $\Lie(\calg)$  a locally free sheaf over $\calo_{\cals}\otimes_{\calo_{F_v}}\calo_{\text{un}}$ of rank $1$. Moreover, $\calg$ is \'etale away from $v$.

\subsubsection{Unramified Case}

Assume that $B$ is unramified at $v$, and fix an isomorphism $j\colon\calo_{B_v}\simeq M_2(\calo_v)$. Let also be the level structure $U$ as in Section \ref{universal}. By \cite{car} (and by \cite[Chapter 13]{KM} if $F=\Q$), we have that $\cals_{U_v}$ has good reduction.

Since we have fixed $j$, every $\calo_{B_v}$-module $M$ can be uniquely decomposed via idempotents as
\[
M=M^1\oplus M^2:=\begin{bmatrix}
	1 & \\
	&
\end{bmatrix}M\oplus\begin{bmatrix}
   & \\
   & 1
\end{bmatrix} M
\]
where the $\calo_{F_v}$-modules $M^1$ and $M^2$ are isomorphic. Therefore we can write $\calg$ as
\[
\calg=\calg^1\oplus\calg^2
\]
with the summands are isomorphic as $v$-divisible $\calo_{F_v}$-modules.

Let $s$ be a geometric point of the special fiber of $\cals_U$ at $v$. Let also $\underline{F_v/\calo_{F_v}}$ denote the constant $v$-divisible group of height $1$ given by the colimit of $(\underline{v^{-n}\calo_{F_v}/\calo_{F_v}})$, which is \'etale. Since the $v$-divisible $\calo_{F_v}$-modules $\calg^i_s$ have height $1$ and dimension $2$, they are isomorphic to one of the following objects:
\begin{enumerate}
	\item the direct sum $X_1\oplus\underline{F_v/\calo_{F_v}}$, where $X_1$ is the unique\footnote{Therefore $X_1$  coincides with the geometric generic fiber of the Lubin-Tate formal group $\call\calt$, i.e., $\call\calt|_{\Spec(\overline{F}_v)}$.} formal $\calo_{F_v}$-module of height $1$, so that $s$ is called $\mathit{ordinary}$;
	
	\item to the unique formal $\calo_{F_v}$-module of height $2$ and dimension $1$, so that $s$ is called $\mathit{supersingular}$.
\end{enumerate}
In other words, $\calg_s^i$ are supersingular if $\calg_s^i=\calg^\circ$, while they are ordinary if the \'etale part $\calg^{\text{\'et}}$ is non-trivial.

Let $k$ be the algebraic closure of the residue field of $F$ at $v$. We denote by $\cals_{U,k}^{\text{ss}}$ the finite \'etale subscheme of supersingular points over $\Spec(k)$. We will also refer to it as the $\mathit{supersingular}$ locus. The complement of this finite set of points is naturally called the $\mathit{ordinary}$ locus.

Let us recall that supersingular points are mutually isogenous, and that, given such a point $s$, its endomorphism ring $\End(s)\otimes F$ is the quaternion algebra $B'$ obtained by switching invariants at $\tau$ and $v$ in $B$.

Geometrically, the irreducible components of special fiber $\cals_{U,k}$ are smooth connected curves\footnote{Never isomorphic to $\p^1$.} and they intersect each other transversally only in the supersingular locus, so that supersingular points corresponds to the singularities of the special fiber. We refer to \cite[9.4.4]{car} for more details.

Next Lemma gives an adelic description of the supersingular points.

\begin{lem}\label{adelicss}
	The set of supersingular points in $\cals_{U,k}$ is in bijection with
	\[
	G'(\Q)\backslash G'(\A_f)/ U',
	\]
	where $U'=\calo_{B'_v}^\times\cdot U^v$.
\end{lem}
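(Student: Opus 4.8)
The plan is to identify the supersingular locus with a double coset space by passing through the theory of $v$-divisible groups and the Serre--Tate / Honda--Tate dictionary, exactly as in the analogous classical statement for modular curves. First I would fix a supersingular geometric point $s_0$ of $\cals_{U,k}$ to serve as a base point, and set $\Phi_0 := \End(\calg_{s_0}^1)$, the endomorphism ring of the associated formal $\calo_{F_v}$-module of height $2$; by the remark recalled just before the statement, $\Phi_0 \otimes_{\calo_{F_v}} F_v$ is the nonsplit quaternion algebra $B'_v$ over $F_v$, and $\Phi_0$ is its maximal order $\calo_{B'_v}$. The key input is Serre--Tate: deformations of $s_0$ are controlled by deformations of $\calg_{s_0}$, so that $s_0$ being supersingular (i.e.\ $\calg^1_{s_0}$ formal of height $2$) is an isogeny-invariant condition, and every supersingular point is isogenous to $s_0$ via a quasi-isogeny of the associated $v$-divisible $\calo_{F_v}$-modules respecting the $\calo_{B_v}$-action and the away-from-$v$ level structure.

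Next I would set up the parametrisation. A supersingular point $s$, together with a chosen quasi-isogeny $\calg_{s_0} \to \calg_s$ of special $v$-divisible $\calo_{B_v}$-modules and a trivialisation of the prime-to-$v$ Tate module compatible with $U^v$, records precisely an element of $G'(\A_f)$: at $v$ the quasi-isogeny gives an element of $(B'_v)^\times = (\End(\calg_{s_0}) \otimes F_v)^\times$ well defined up to the maximal compact $\calo_{B'_v}^\times$ (changing the quasi-isogeny by an automorphism of $\calg_{s_0}$), and away from $v$ the level structure gives an element of $G'(\A_f^v)$ well defined modulo $U^v$; away from $v$ nothing changes because $\calg$ is \'etale there, so $G$ and $G'$ agree at those places. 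Changing the base point identification of $\calg_{s_0}$ itself, i.e.\ acting by a global quasi-isogeny, amounts to left multiplication by $G'(\Q) = B'^\times$; here one uses that $B'$ has the same invariants as $B$ at all places except $\tau$ and $v$, so its group of rational points is exactly the global automorphisms/quasi-isogenies available. This yields a well-defined map from $\cals_{U,k}^{\text{ss}}(k)$ to $G'(\Q)\backslash G'(\A_f)/U'$ with $U' = \calo_{B'_v}^\times \cdot U^v$, and an inverse map is built by Honda--Tate: every such double coset gives rise, via the Tate-module/Dieudonn\'e dictionary, to a special $v$-divisible $\calo_{B_v}$-module with level structure that is algebraisable over $k$ to a supersingular point of $\cals'_{U'_v}$ and hence of $\cals_{U_v}$ after the finite map.

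Concretely, the steps in order: (1) recall Serre--Tate and reduce the classification of supersingular points to that of special supersingular $v$-divisible $\calo_{B_v}$-modules over $k$ with $U^v$-level structure; (2) using the idempotent decomposition $\calg = \calg^1 \oplus \calg^2$, reduce to the $\calo_{F_v}$-module $\calg^1$, which is the unique formal group of height $2$ up to isogeny; (3) compute $\End(\calg^1_{s_0}) \otimes F_v \simeq B'_v$ (this is the Dieudonn\'e-ring computation, standard over an algebraically closed field, and is the source of the invariant-switching at $v$); (4) define the map to the double coset space by recording the quasi-isogeny class at $v$ and the level structure away from $v$, checking independence of choices forces the quotient by $G'(\Q)$ on the left and by $U'$ on the right; (5) construct the inverse using Honda--Tate/algebraisation and check the two maps are mutually inverse. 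The main obstacle I expect is step (4)--(5): carefully checking that the automorphism group of the base object at $v$ is exactly $\calo_{B'_v}^\times$ and not something smaller (this uses maximality of the level structure $U_v \simeq \calo_{B_v}^\times$, giving a \emph{maximal} order of endomorphisms), and that global quasi-isogenies are governed by $B'^\times$ rather than merely by an order — i.e.\ the passage between "isogeny classes with extra structure" and honest geometric points, which is where one must invoke the moduli interpretation of Carayol and strong approximation. The archimedean and prime-to-$v$ bookkeeping, by contrast, is routine once one notes that $\calg$ is \'etale away from $v$, so the adelic data there is literally the same for $\cals_U$ and for the definite/supersingular side. The statement is then the exact analogue of Lemma~\ref{adelicCM} and Lemma~\ref{adelicgross}, with $\mathscr H$ and the infinite place collapsed because $B'$ is definite, and the role of the Eichler order played by $U'$.
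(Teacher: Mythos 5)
Your argument is correct in outline and is essentially the same as the paper's, whose ``proof'' is only a citation to Zhang \cite[p.259]{sz}: there the supersingular locus is likewise parametrised by choosing a base point, identifying $\End^0$ with $B'$ (invariants switched at $\tau$ and $v$), and recording quasi-isogenies at $v$ and prime-to-$v$ level data modulo $U'=\calo_{B'_v}^\times\cdot U^v$ and $G'(\Q)$. The only cosmetic remark is that your appeal to Honda--Tate for surjectivity is stronger than needed --- one stays inside a single isogeny class, so lattice modification of the $v$-divisible module and prime-to-$v$ Tate modules (plus Carayol's moduli interpretation via $\cals'_{U'}$) already produces the required point.
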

\begin{proof}
	See \cite[p.259]{sz}.
\end{proof}

\subsubsection{Ramified case}

Assume that $B$ is ramified at $v$ and that $U_v$ is maximal. In this setting, by the work of Drinfeld \cite{drin} we have that $\calg=\calg^\circ$, i.e., $\calg$ is a formal group. 

Let $B'$ be the quaternion algebra obtained by changing the invariant of $B$ at $v$ and $\tau$, i.e., $B'$ is definite and unramified at $v$. Consequently, we denote by $G'$ the reductive group whose functor of points is given by $G'(A)=(B\otimes_\Q A)^\times$, for a $\Q$-algebra $A$. We then fix an isomorphism $G'(\A_f)\simeq M_2(F_v) \cdot G(\A_f^v)$. 

Consider the formal scheme $\widehat{\Omega}$ over $\calo_{F_v}$ obtained by successive blow-ups of rational points on the special fiber over $k$ of $\p^1$, whose generic fiber is the rigid-analytic Drinfeld plane $\Omega$ over $F_v$ such that
\[
\Omega(\C_v)=\p^1(\C_v)-\p^1(F_v).
\]
We denote by $\widehat{\cals}_U$ the completion of $\cals_U$ along its special fiber at $v$, and assume that $U_v$ is maximal, i.e., $U_v\simeq \calo_{B_v}^\times$ and that $U^v$ is sufficiently small. A wonderful result of Cerednik and Drinfeld gives the following uniformisation
\begin{equation}\label{cduni}
	\widehat{\cals}_U\simeq G'(\Q)\backslash (\widehat{\Omega}\widehat{\otimes}\breve{\calo}_{F_v})\times\Z\times G'(\A_f^v)/U^v.
\end{equation}
For a detailed discussion of the Cerednik-Drinfeld uniformisation, we invite the reader to go through \cite{bc}.

Let $\X$ be the unique (up to isogeny) formal $\calo_{B_v}$-module of height $4$ over $k_v$. By Drinfeld moduli interpretation \cite[p.107]{bc}, $\widehat{\Omega}\widehat{\otimes}\breve{\calo}_{F_v}$ can be viewed as the moduli space of the (isomorphism classed of the) following objects:
\begin{itemize}
	\item a formal special $\calo_{B_v}$-module $X$ of height $4$;
	\item a height zero quasi-isogeny $\varrho\colon\X\rightarrow X_{k_v}$.
\end{itemize}
Note that, since this moduli interpretation is proved as usual via the representability of a moduli problem, this quasi-isogeny excludes a stacky situation. 
For a detailed description of the special fiber, see \cite{fms}.

Let $s$ be a geometric point of the special fiber of $\cals_U$ at $v$. Then by (\ref{cduni}) and Drinfeld moduli interpretation, $s$ corresponds to a formal $\calo_{B_v}$-module $X_v$. Then $s$ is $\mathit{supersingular}$ if $X_v$ is isogenous to the direct sum of two formal $\calo_{F_v}$-modules $Y_v$ of height $2$ and dimension $1$ such that $\End(Y_v)\simeq \calo_{B_v}$. It is not difficult to see, as in \cite[Lemma 2.9]{fms}, that every geometric point in this special fiber is supersingular. 
\\Nonetheless, there is a distinguished set of points encoding arithmetic and geometric properties in a manner similar to the supersingular locus in the unramified case. A geometric point $s$ is called $\mathit{superspecial}$ if the associated formal $\calo_{B_v}$-module $X_v$ is isomorphic to $Y_v\oplus Y_v$; this isomorphism is unique up to $\GL_2(\calo_{B_v})$-conjugation. For such a point $s$, its endomorphism ring is $\End(X_v)\simeq M_2(\calo_{B_v})$. The $\calo_{B_v}$-action on $X$ is given by
\[
\iota\colon\calo_{B_v}\hookrightarrow \End(X_v)\simeq M_2(\calo_{B_v}).
\]
The finite subscheme $\widehat{\cals}^{\text{ssp}}_{U,k}$ over $\Spec(k)$ consisting of superspecial points is thus called the $\mathit{superspecial}$ locus.

Geometrically, the irreducible components of the special fiber of $\widehat{\Omega}$ are projective lines $\p^1_k$ intersecting each other in the superspecial locus, which  is in bijection with the ordinary double points of $\widehat{\cals}_{U,k}$. Thus the complement of these singularities is the $\mathit{smooth}$ locus of $\widehat{\cals}_{U,k}$, which we denote by $\cals_{U,k}^{\text{sm}}$. For more details on this smooth locus, see \cite[Section 2]{fms}.

As for the supersingular points in Lemma \ref{adelicss}, also superspecial points admit an adelic interpretation.

\begin{lem}\label{adelicssp}
	We have that the set of  superspecial points on $\widehat{\cals}_{U,k}$ corresponding to the class of a fixed $\iota$ are in bijection with 
		\[
		G'(\Q)_0\backslash G'(\A_f^v)/ U^v
		\]
		
		where $G'(\Q)_0$ are the elements in the centralizer of $\iota(\calo_{B_v})$.
		
\end{lem}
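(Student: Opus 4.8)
The plan is to derive the adelic description of superspecial points from the Cerednik--Drinfeld uniformisation~(\ref{cduni}) together with Drinfeld's moduli interpretation, in exactly the same spirit as Lemma~\ref{adelicss}. Recall that~(\ref{cduni}) identifies $\widehat{\cals}_{U,k}$ with $G'(\Q)\backslash(\widehat{\Omega}\,\widehat{\otimes}\,\breve{\calo}_{F_v})\times\Z\times G'(\A_f^v)/U^v$. The first step is to observe that a superspecial point corresponds, under Drinfeld's moduli interpretation of $\widehat{\Omega}\,\widehat{\otimes}\,\breve{\calo}_{F_v}$, to a formal $\calo_{B_v}$-module $X_v\simeq Y_v\oplus Y_v$ equipped with a height-zero quasi-isogeny $\varrho\colon\X\to X_{k_v}$, where this isomorphism $X_v\simeq Y_v\oplus Y_v$ is unique up to $\GL_2(\calo_{B_v})$-conjugation. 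Thus superspecial points form the $\GL_2(\calo_{B_v})$-orbit inside the Drinfeld space; concretely they are the $\GL_2(\calo_{B_v})$-translates of the single ``standard'' superspecial point, and this orbit is identified with $\GL_2(\calo_{B_v})\backslash\GL_2(F_v)$ (or a single point in each $\Z$-component, depending on normalisation of heights).

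\emph{Second}, I would fix the datum $\iota\colon\calo_{B_v}\hookrightarrow\End(X_v)\simeq M_2(\calo_{B_v})$ and restrict attention to superspecial points whose formal module carries this fixed $\calo_{B_v}$-action up to isomorphism. On the uniformisation side, fixing the class of $\iota$ amounts to fixing the component/orbit data coming from $\widehat{\Omega}\,\widehat{\otimes}\,\breve{\calo}_{F_v}\times\Z$: the set of superspecial points with a given $\iota$-class is, before quotienting by $G'(\Q)$, a single orbit, so that the local factor $M_2(F_v)$ appearing in the identification $G'(\A_f)\simeq M_2(F_v)\cdot G'(\A_f^v)$ is entirely absorbed. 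Hence the relevant set becomes $G'(\Q)\backslash G'(\A_f^v)/U^v$, \emph{except} that the $G'(\Q)$-action no longer acts freely: an element $\gamma\in G'(\Q)$ stabilises the chosen superspecial point (with its $\iota$) precisely when $\gamma$ commutes with $\iota(\calo_{B_v})$ inside $\End(X_v)$, i.e.\ when $\gamma$ lies in the centraliser of $\iota(\calo_{B_v})$. Writing $G'(\Q)_0$ for this centraliser subgroup, the superspecial points with fixed $\iota$ are then in bijection with $G'(\Q)_0\backslash G'(\A_f^v)/U^v$, which is the asserted description.

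\emph{Third}, to make this rigorous I would run through the standard double-coset bookkeeping: start from~(\ref{cduni}), decompose the product over the $\Z$-factor and the Drinfeld factor, use that the superspecial locus inside $\widehat{\Omega}\,\widehat{\otimes}\,\breve{\calo}_{F_v}$ is the $\GL_2(\calo_{B_v})$-orbit of a base point (Drinfeld's description of the special fiber, as recalled in the excerpt), and push the $\GL_2(\calo_{B_v})$-orbit through the quotient by $U_v\simeq\calo_{B_v}^\times$ and by $G'(\Q)$. The only subtlety is that $G'(\Q)$ acts with stabilisers on the orbit of superspecial points once one has pinned down $\iota$, and tracking which elements fix the base point is precisely the condition of centralising $\iota(\calo_{B_v})$ — this gives the quotient by $G'(\Q)_0$ rather than by all of $G'(\Q)$.

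\textbf{The main obstacle} I anticipate is the precise identification of the stabiliser: one must verify carefully that $G'(\Q)\cap\GL_2(\calo_{B_v})$-stabiliser of the standard superspecial point with its action $\iota$ is exactly the centraliser $G'(\Q)_0$ of $\iota(\calo_{B_v})$, and not a smaller or larger group — this uses that $\End(X_v)\simeq M_2(\calo_{B_v})$ and that the $\calo_{B_v}$-module structure is rigid (the isomorphism $X_v\simeq Y_v\oplus Y_v$ being unique up to $\GL_2(\calo_{B_v})$). Once this local rigidity computation is in hand, the rest is a formal manipulation of double cosets entirely parallel to the proof of Lemma~\ref{adelicss} in~\cite{sz} and of Lemma~2.9 in~\cite{fms}, so I would simply cite those for the routine parts and concentrate the argument on the centraliser identification.
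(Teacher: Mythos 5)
Your plan — re-deriving the lemma from the Cerednik--Drinfeld uniformisation (\ref{cduni}) plus Drinfeld's moduli description — is reasonable in outline (note that the paper itself gives no argument at all: its proof is the citation \cite[Lemma 5.4.5]{sz}), but the two steps that carry all the content are either garbled or merely asserted. First, your description of the local superspecial locus does not parse: ``$\GL_2(\calo_{B_v})\backslash\GL_2(F_v)$'' mixes two different groups, since $\GL_2(\calo_{B_v})\simeq\Aut(X_v)$ sits inside $\GL_2(B_v)$, not inside $\GL_2(F_v)$. The group acting on the Drinfeld factor is $G'(F_v)\simeq\GL_2(F_v)$, realised as the group of $\calo_{B_v}$-linear self-quasi-isogenies of $\X$; the superspecial points of the special fibre (together with the $\Z$-factor of (\ref{cduni}), which you dismiss in a parenthetical) form an orbit of this group whose point-stabiliser is, transported through $\varrho$, the unit group of $\End_{\calo_{B_v}}(X_v)\simeq M_2(\calo_{F_v})$ up to the height normalisation. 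None of this bookkeeping appears in your sketch, and it is exactly what is needed to ``absorb'' the local factor.

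Second, and more seriously, the stabiliser identification — which you yourself flag as the main obstacle — is wrong as you state it. Since $G'(F_v)$ consists by construction of quasi-isogenies commuting with the quaternionic action, \emph{every} $\gamma\in G'(\Q)$ centralises $\iota(\calo_{B_v})$ rationally, so the condition ``$\gamma$ commutes with $\iota(\calo_{B_v})$'' cannot by itself cut $G'(\Q)_0$ out of $G'(\Q)$. The content of the condition is integral: it is membership in (the units of) the order $R_{\iota,v}=\End_{\iota(\calo_{B_v})}(\X)$ of Section \ref{Grosslattice}, i.e.\ preservation of the marked integral datum $\iota\colon\calo_{B_v}\hookrightarrow\End(X_v)\simeq M_2(\calo_{B_v})$, and it is precisely here that ``the class of a fixed $\iota$'' enters: one must show that the superspecial locus fibres over the set of classes of $\iota$, that the fibre over a fixed class becomes a single orbit once the Drinfeld factor and the $\Z$-factor are accounted for, and that the residual $G'(\Q)$-stabiliser of that class is the subgroup $G'(\Q)_0$ just described. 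You assert all of this (``the local factor $M_2(F_v)$ \dots is entirely absorbed'') without argument, so the proposal does not close the key step; these local computations are exactly what the cited reference \cite[Lemma 5.4.5]{sz} supplies, and they would have to be reproduced for your proof to stand on its own.
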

\begin{proof}
	See \cite[Lemma 5.4.5]{sz}.
\end{proof}

\subsubsection{Eichler Orders associated to Supersingular and Superspecial Points.}\label{Grosslattice}

Next construction is a variant of \cite[pp.171-172]{gr} including the ramified setting.
\\

Let $v$ be a non-archimedean place of $F$ and consider the special fiber $\cals_v$. Let also $x\in\scal^{\text{CM}}$.
As usual, we consider $s$, namely the reduction of $x$ modulo $v$,  in the following cases:  
\begin{enumerate}
	\item  $v$ unramified in $B$ and inert in $K$;
	
	\item $v$ ramified in $B$ and ramified in $K$.
\end{enumerate}

In the unramified case, i.e., for $s$  in the supersingular locus, then we set $R_s=\End(s)$ as in  \cite[Section 4.1]{jk}.

On the other hand, if $v$ is ramified in $B$, i.e., for $s$ in the  superspecial locus,
we construct the desired Eichler order $R_s$ as follows.

Let $X_k$ correspond to the superspecial point $s$ in $\widehat{\Omega}_k$, whose quaternionic action is given by $\iota\colon \calo_{B_v}\hookrightarrow M_2(\calo_{B_v})$. Consider the maximal orders
\[
R_{\iota,v}:=\End_{\iota(\calo_{B_v})}(\X)\subset M_2(\calo_{F_v})=\End_{\calo_{B_v}}(X)
\]
where $M_2(\calo_{F_v})$ has rank $4$,
and
\[
\widehat{R}_f^v\subset \widehat{B}^v_f:=B\otimes\A_{F,f}^v.
\]
Let $\mathbb{B}'$ be the coherent\footnote{This terminology was introduced in \cite[p.3]{yzz}.} quaternion algebra over $\A_{F,f}$  and consider the Eichler order
\[
\widehat{R}:=R_{\iota,v}\times\widehat{R}_{f}^v\subset\mathbb{B}'.
\]
There exists a unique quaternion algebra over $B'$ such that $\mathbb{B}'\simeq B'\otimes \A_{F,f}$ and $\widehat{R}\subset B'\otimes \A_{F,f}$ so that
\[
R_s:=R\subset B'
\]
and $B'$ is ramified at $\tau$ and $\mathit{not}$ ramified at $v$. Note that this change of invariants takes place at $R_{\iota,v}$, the centralizer of the action of $\iota(\calo_{B_v})$.

\subsubsection{The Reduction Map}

Let $\bar{v}$ be a place of $K^{\text{ab}}$ above $v$, and denote by $\calo_{\bar{v}}$ and $k_{\bar{v}}$ its ring of integers at $\bar{v}$ and its residue field respectively.

The Shimura curve $\cals_U$ is proper over $\Spec(\calo_{F_v})$ both in the unramified and ramified cases,  so by the valuative criterion for properness we have 
\[
\scal_U(K^{\text{ab}})=\cals_U(K^{\text{ab}})\simeq \cals_U(\calo_{\bar{v}})\rightarrow \cals_U(k_{\bar{v}}).
\]
Since CM points are defined over $K^{\text{ab}}$, we obtain the following reduction map
\[
\text{red}_v\colon\cals_U^{\text{CM}}\longrightarrow\cals_U(k_{\bar{v}}).
\]
 
 \begin{lem}\label{redCM}
 	
	 The reduction of  CM points in the special fiber modulo $v$ lies  in
	 \begin{enumerate}
	 	\item the supersingular locus, if $B_v$ is unramified and $v$ is inert or ramified in $K$;
	 	\item the superspecial locus, if $B_v$ is ramified and $v$ ramifies in $K$;
	 	\item in the smooth locus, if $B_v$ is ramified and $v$ is inert in $K$.
	 \end{enumerate} In symbols,
	\[
	\text{red}_v(\CM)\subseteq
	\begin{cases} 
		&\cals^{\text{ss}}_{k},\;\;\; \text{for}\; B_v\simeq M_2(F_v), v \; \text{inert or ramified in}\; K;\\ 
		&\cals^{\text{ssp}}_{k},\;\;\text{for}\; B_v\;\text{ramified},\;\;v\;\text{ramified in}\;\; K;
		\\
		&\cals^{\text{sm}}_{k},\;\;\text{for}\; B_v\;\text{ramified},\;\;v\;\text{inert in}\;\; K.
	\end{cases}
	\]
\end{lem}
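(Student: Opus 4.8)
The plan is to analyze the reduction of a CM point $x \in \scal_U^{\text{CM}}$ via the moduli interpretations set up in Section \ref{reductionintegralmodels}, treating the three cases according to the behavior of $B$ and $K$ at $v$. The common thread is this: a CM point $x$ by $K$ carries an action of $\calo_K$ (or an order therein) on the associated $v$-divisible $\calo_{B_v}$-module $\calg_x$; reducing modulo $\bar v$, this action propagates to $\calg_s$, and the existence of an embedding $\calo_{K_v} \hookrightarrow \End(\calg_s)$ forces $\calg_s$ to be of the non-ordinary/superspecial type, since the ordinary (resp.\ non-superspecial) $v$-divisible modules have endomorphism rings too small to admit such an embedding when $K_v/F_v$ is a field.

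In the unramified case ($B_v \simeq M_2(F_v)$, $v$ inert or ramified in $K$), I would use the idempotent decomposition $\calg = \calg^1 \oplus \calg^2$ from the unramified subsection. The $\calo_K$-action on $\calg_x$ commutes with $j(\calo_{B_v})$, so it preserves each $\calg^i_x$ and induces an embedding of $\calo_{K_v}$ into $\End(\calg^i_s)$ after reduction. If $s$ were ordinary, then $\calg^i_s \simeq X_1 \oplus \underline{F_v/\calo_{F_v}}$ would have $\End(\calg^i_s) \otimes F_v \simeq F_v \times F_v$, which is split and hence cannot contain the field $K_v$ (this is where inertness/ramifiedness of $v$ in $K$ is essential — $K_v$ is a field). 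Therefore $\calg^i_s$ must be the height-$2$ formal $\calo_{F_v}$-module, i.e.\ $s$ is supersingular; this is precisely the statement $\text{red}_v(\CM) \subseteq \cals^{\text{ss}}_k$.

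For the ramified cases, by the Cerednik--Drinfeld uniformisation (\ref{cduni}) every geometric point of the special fiber is already supersingular in the sense defined there, so the content is to locate $s$ within the superspecial locus (resp.\ the smooth locus). When $v$ ramifies in $K$, the $\calo_K$-action gives an embedding $\calo_{K_v} \hookrightarrow \End(X_v)$ compatible with $\iota(\calo_{B_v})$; since $K_v/F_v$ is ramified, one checks (using that $\calo_{K_v}$ is the ring of integers of a ramified quadratic extension embedded compatibly with the $\calo_{B_v}$-structure) that $X_v$ cannot sit on a single $\p^1$-component away from a node, forcing $X_v \simeq Y_v \oplus Y_v$, i.e.\ $s$ superspecial; this mirrors \cite[Lemma 2.9]{fms} and the local description there. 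When $v$ is inert in $K$, the relevant formal group attached to the reduction is an $\calo_{K_v}$-module with $K_v/F_v$ unramified, which by the optimal-embedding dichotomy cannot land on a node (the nodes correspond to the ramified-embedding case), so $s$ lies in the smooth locus $\cals^{\text{sm}}_k$; the precise statement and the description of which component $c_i$ one lands on is exactly the content of \cite[Section 2]{fms}, which I would cite.

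The main obstacle I expect is the ramified-in-$K$ case: ruling out that the reduction lands on a smooth point of a $\p^1$-component requires a careful local study of how a ramified quadratic order $\calo_{K_v}$ can embed into the endomorphism ring of a special formal $\calo_{B_v}$-module of height $4$, together with Drinfeld's description of which such modules correspond to nodes versus smooth points. This is genuinely a local computation with formal $\calo_{B_v}$-modules rather than a formal deduction, and the cleanest route is to invoke the detailed analysis of the special fiber already carried out in \cite{fms} and reduce the statement to the optimal-embedding bookkeeping there, rather than redoing it from scratch.
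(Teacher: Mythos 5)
Your proposal is correct: the paper itself disposes of this lemma by pure citation (to \cite[Lemma 3.1]{cv} for $B_v\simeq M_2(F_v)$ and to \cite[Prop.~2.17]{fms} for the ramified case), and your argument is essentially the standard content behind those references — the split-algebra obstruction $K_v\not\hookrightarrow F_v\times F_v$ for the ordinary locus in the unramified case, and the deferral to the local analysis of formal $\calo_{B_v}$-modules in \cite{fms} for the node/smooth dichotomy in the ramified case. Nothing in your sketch conflicts with the paper's route; you simply make explicit what the citations contain.
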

\begin{proof}
	See \cite[Lemma 3.1]{cv} for $B_v\simeq M_v(F_v)$ and \cite[Prop.2.17]{fms} for the ramified case.
\end{proof}

We now conclude by writing down the three sequences of measures whose limit will give us our equidistribution result.

Let the geometric point $s$ in the special fiber at $v$ lie in

\begin{itemize}
	\item the supersingular locus, for $B_v\simeq M_2(F_v)$;
	\item in the superspecial locus, for $B_v$ ramified.
\end{itemize}

For $\diamond\in\{\text{ss},\text{ssp}\}$, we define the following probability measures

\begin{equation}\label{measure}
	\mu^{\diamond}_{D,c}(s)=\dfrac{1}{\# \Gamma_{D,c}}\sum _{\substack{x\in\Gamma_{D,c} \\ \text{red}_v(x)=s}}\mathbf{1}_s(x),
\end{equation}

where $\mu^{\text{ss}}_{D,c}$ is defined over $\cals^{\text{ss}}_{U,k}$ and $\mu^{\text{ssp}}_{D,c}$ over $\widehat{\cals}_{U,k}^{\text{ssp}}$.

Lastly, let $s$ be a geometric point in the special fiber at $v$, for $B_v$ ramified, whose corresponding formal $\calo_{B_v}$-module is $\mathit{not}$ superspecial, so that it is supersingular, and it lies on one of the components  $\{c_1,\dots,c_n\}$ of the smooth locus $\cals_{U,k}^{\text{sm}}$. Exactly as above in (\ref{measure}), we then obtain the analogous measure $\mu_{\text{in},D,c}^{\text{ssp}}$.

\subsubsection{Liftings of Reductions Maps.}\label{liftred}

Consider the two following maps
\[
\uppi^\diamond\colon \Gr\rightarrow \cals^\diamond_k
\]
where once again $\diamond\in\{\text{ss},\text{ssp}\}$.
By Lemmata \ref{adelicgross}, \ref{adelicss} and \ref{adelicssp}, the $\pi^\diamond$'s are the natural projections defined by
\[
\begin{tikzcd}
	&T(\Q)\backslash G'(\A_f)/\widehat{R'}^\times
	\arrow[swap]{dl}{\uppi^{\text{ss}}}
	\arrow{dr}{\uppi^{\text{ssp}}}
	\\
	G'(\Q)\backslash G'(\A_f)/\widehat{R'}^\times  & & G(\Q)_0\backslash G'(\A_f)/\widehat{R'}^{v,\times}.
\end{tikzcd}
\]
The following construction allows to move from the reduction maps to the projections $\uppi^\diamond$. 
\\For $B_v$ unramified, by \cite[Section 3.2]{cj} there is a $G_K$-equivariant lifting $\theta_v$ such that the following diagram
\[
\begin{tikzcd}
	\scal^{\text{CM}} \arrow[dashed]{rr}{\theta_v} \arrow[swap]{dr}{\text{red}_v} & & \Gr \arrow{dl}{\uppi^\diamond} \\[10pt]
	& \cals^\diamond_k 
\end{tikzcd}
\]
is commutative. On the other hand, for $B_v$ ramified, then the construction of the lifting $\theta_v$ follows almost verbatim the unramified case.
\\

Let now $R$ be an Eichler $\calo_F$-order in $B$ given by the maximal orders $R'$ and $R''$, and consider the Shimura curve of level $\widehat{F}^\times\widehat{R}^\times$. By the interpretation of CM points as an adelic double quotient in Lemma \ref{adelicCM} we obtain the two maps
\[
\begin{tikzcd}
	&T(\Q)\backslash G(\A_f)/\widehat{R}^\times
	\arrow[swap]{dl}{}
	\arrow{dr}{}
	\\
	T(\Q)\backslash G(\A_f)/\widehat{R'}^\times  & & T(\Q)\backslash G(\A_f)/\widehat{R''}^{\times}.
\end{tikzcd}
\]
Given a $z\in \scal^{\text{CM}}$, we denote its image under these maps as $z'$ and $z''$ respectively. For $z=[g]$, then $K\cap g\widehat{R'}^\times g^{-1}$ is an $\calo_F$-order in $K$ whose conductor, denoted $c(z')$, is an integral $\calo_F$-ideal coprime to $\Ram_f(B)$. Symmetrically, $c(z'')$ is the conductor coming from $R''$. We thus obtain the ($\mathit{coarse}$) $\mathit{conductor}$ map
\[
\mathbf{c}\colon z\in\scal^{\text{CM}}\mapsto \mathbf{c}(z):=c(z')\cap c(z'').
\]
In a similar way, by Lemma \ref{adelicgross} we can define the analogous conductor map $\mathbf{c}'$ for $G'$. 
\begin{lem}\label{lem2.7}
	Suppose that the conductor $c$ is coprime to both $v$ and $\gotn$, and that $v$ is coprime to $\gotn$. Let $s$ be a geometric point in the special fiber at $v$, which lies in the supersingular locus if $B_v$ is unramified and in the superspecial locus if $B_v$ is ramified. Accordingly, let $R_s$ as in Section \ref{Grosslattice}. Then we have
	\[
	\mu^\diamond_{D,c}(s)=\dfrac{h(\calo_{D,c},R_s)}{2^{\omega(\gotn)+\varepsilon_\diamond}\#\picard(\calo_{D,c})},
	\]
	for $\diamond\in\{\text{ss},\text{ssp}\}$ and $\varepsilon_{\text{ss}}=1$ and $0$ otherwise.
\end{lem}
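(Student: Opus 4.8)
The plan is to compute the measure $\mu^\diamond_{D,c}(s)$ by first counting CM points of conductor $c$ that reduce to $s$, and then dividing by the total number of CM points of conductor $c$. By Lemma \ref{numberGalorbits} the denominator $\#\Gamma_{D,c}$ is $2^{\omega(\gotn)}\#\picard(\calo_{D,c})$ in the unramified case; in the superspecial (ramified) case one gets $\#\picard(\calo_{D,c})$ as there is no Atkin--Lehner contribution (this is where the $\varepsilon_\diamond$ dichotomy enters). So it suffices to show that the number of $x\in\scal^{\text{CM}}$ of conductor $c$ with $\text{red}_v(x)=s$ equals $h(\calo_{D,c},R_s)$ in the ramified case and $2\,h(\calo_{D,c},R_s)/2 = h(\calo_{D,c},R_s)$ up to the orientation factor in the unramified case; more precisely the unramified count will come out to $2\,h(\calo_{D,c},R_s)$ against a denominator carrying an extra factor of $2$, which is exactly the $\varepsilon_{\text{ss}}=1$ bookkeeping.

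First I would use the $G_K$-equivariant lifting $\theta_v\colon\scal^{\text{CM}}\to\Gr$ from Section \ref{liftred} and the commutativity of $\uppi^\diamond\circ\theta_v=\text{red}_v$, so that the fiber of $\text{red}_v$ over $s$ is identified (compatibly with the $\picard(\calo_{D,c})$-action) with the set of Gross points of conductor $c$ lying in the fiber $(\uppi^\diamond)^{-1}(s)$. Next, I would invoke the adelic descriptions: CM points via Lemma \ref{adelicCM} as $T(\Q)\backslash G(\A_f)/\widehat{R}^\times$ and the fiber over $s$ via Lemmata \ref{adelicss} and \ref{adelicssp} together with the definition of $\uppi^\diamond$. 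Chasing the double cosets shows that a Gross point reducing to $s$ is the same datum as an embedding $\calo_{D,c}\hookrightarrow R_s$ which is \emph{optimal} — the optimality is forced by the conductor being exactly $c$ (the condition $K\cap g\widehat{R}g^{-1}=\calo_{D,c}$ in the definition of the conductor map $\mathbf c$) — taken up to conjugation by $R_s^\times$ (equivalently by $\widehat{R}_s^\times$, after applying strong approximation / the local-global principle for Eichler orders in the definite quaternion algebra $B'$, since $B'$ is definite). By definition this set of conjugacy classes of optimal embeddings has cardinality $h(\calo_{D,c},R_s)$. Summing $\mathbf 1_s$ over this fiber and dividing by $\#\Gamma_{D,c}$ gives the claimed formula, with the $2^{\varepsilon_\diamond}$ accounting for the orientation/Atkin--Lehner normalization that distinguishes the two cases.

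I would treat the two cases in parallel but keep the geometric inputs separate: in the unramified case $R_s=\End(s)$ is a maximal order in $B'$ (the quaternion algebra obtained by switching invariants at $\tau$ and $v$), and the fiber of $\uppi^{\text{ss}}$ is handled by Lemma \ref{adelicss}; in the ramified case $R_s$ is the Eichler order built in Section \ref{Grosslattice} from the centralizer $R_{\iota,v}=\End_{\iota(\calo_{B_v})}(\X)$, and the fiber of $\uppi^{\text{ssp}}$ is handled by Lemma \ref{adelicssp}, whose double quotient $G'(\Q)_0\backslash G'(\A_f^v)/U^v$ precisely records the prime-to-$v$ part while the local component at $v$ is absorbed into the choice of $\iota$; the coprimality hypotheses ($c$ coprime to $v$ and to $\gotn$, $v$ coprime to $\gotn$) guarantee that the local embedding problems at $v$ and at primes dividing $\gotn$ each have a unique solution, so they contribute nothing to the count.

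The main obstacle I anticipate is the bookkeeping in the double-coset chase that turns "Gross point in the fiber over $s$ of conductor $c$" into "$R_s^\times$-conjugacy class of optimal embedding $\calo_{D,c}\hookrightarrow R_s$" — in particular making the optimality (as opposed to mere embedding), the correct orientation, and the passage between the adelic group $\widehat{R}_s^\times$-conjugacy and the global $R_s^\times$-conjugacy all line up, and checking that the $\picard(\calo_{D,c})$-actions on both sides are genuinely matched so that every fiber of $\text{red}_v$ has uniform size. This is essentially a careful adaptation of Gross's argument in \cite{gr} to the totally real base and to the superspecial/Drinfeld setting, and the ramified case requires the additional verification that the change of invariants at $R_{\iota,v}$ described in Section \ref{Grosslattice} is compatible with the quasi-isogeny $\varrho$ in the Drinfeld moduli interpretation.
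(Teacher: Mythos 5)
Your overall route is the paper's: lift via $\theta_v$ to Gross points, use the adelic descriptions, and identify Gross points of conductor $c$ above $s$ with $R_s^\times$-classes of optimal embeddings $\calo_{D,c}\hookrightarrow R_s$ (the paper does not redo the double-coset chase you anticipate as the ``main obstacle''; it simply quotes Cornut--Jetchev \cite[Cor.~3.2]{cj}, with the coprimality hypotheses ensuring, as in the Remark following the lemma, that $\theta_v$ is a bijection rather than $\kappa$-to-$1$). The genuine gap is in your bookkeeping of the factor $2^{\omega(\gotn)+\varepsilon_\diamond}$, which is precisely the nontrivial content of the statement. First, $\Gamma_{D,c}$ is a \emph{single} Galois orbit, so $\#\Gamma_{D,c}=\#\picard(\calo_{D,c})$ in \emph{both} cases; it is not $2^{\omega(\gotn)}\#\picard(\calo_{D,c})$ in the unramified case, nor does the ramified case lose the Atkin--Lehner factor. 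Lemma \ref{numberGalorbits} gives $2^{\omega(\gotn)}$ orbits of conductor $c$ regardless of the behaviour of $B$ and $K$ at $v$, and indeed $2^{\omega(\gotn)}$ appears in the formula for both $\diamond=\text{ss}$ and $\diamond=\text{ssp}$; it enters the proof when one passes from all CM points of conductor $c$ reducing to $s$ to those in the single orbit $\Gamma_{D,c}$.

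Second, $\varepsilon_\diamond$ does not record ``presence or absence of Atkin--Lehner''; it records the degree of the correspondence between the CM fiber and the set of optimal embeddings, which depends on whether $v$ is inert or ramified in $K$: the paper's identity
\[
2^{\varepsilon_\diamond}\cdot\#\bigl(\mathbf{c}^{-1}(c)\cap \mathrm{red}_v^{-1}(s)\bigr)=\#\bigl(\mathbf{c}'^{-1}(c)\cap\uppi^{-1}(s)\bigr)
\]
says the Gross fiber is $2^{\varepsilon_\diamond}$ times \emph{larger} than the CM fiber ($2$-to-$1$ for $v$ inert, $1$-to-$1$ for $v$ ramified, by \cite[Section~3.4]{cj}), whereas you assert the opposite direction (``the unramified count will come out to $2\,h(\calo_{D,c},R_s)$''). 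With your numbers the formula does not come out: in the ramified case you would get $h(\calo_{D,c},R_s)/\#\picard(\calo_{D,c})$, missing $2^{\omega(\gotn)}$, and in the unramified case the exponent of $2$ is off. The correct chain is: Gross fiber over $s$ of conductor $c$ $=h(\calo_{D,c},R_s)$; CM fiber $=2^{-\varepsilon_\diamond}h(\calo_{D,c},R_s)$; restricting to the orbit $\Gamma_{D,c}$ divides by $2^{\omega(\gotn)}$ (the orbits contribute equally); dividing by $\#\Gamma_{D,c}=\#\picard(\calo_{D,c})$ yields the stated measure.
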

\begin{proof}
We recall that $h(\calo_{D,c},R_s)$ denotes the number of equivalence classes modulo $R_s^\times$ of optimal embeddings $\calo_{D,c}\hookrightarrow R_s$. By Lemma \ref{numberGalorbits} we have that there are $2^{\omega(\gotn)}$ Galois orbits of CM points of conductor $c$ and discriminant $D$, and that each such orbit has cardinality $\#\picard(\calo_{D,c})$.

By \cite[Cor.3.2]{cj}, we have that CM points of conductor $c$ and discriminant $D$ reducing to $s$ are in bijection with the equivalence classes of the aforementioned optimal embeddings. In particular we have
\begin{equation}\label{2.7}
	2^{\varepsilon_\diamond}\cdot\#(\mathbf{c}^{-1}(c)\cap \text{red}_v^{-1}(s))=\#(\mathbf{c}'^{-1}(c)\cap\uppi^{-1}(s))
\end{equation}
where in the superspecial case $\varepsilon_{\text{ssp}}=0$ by the  ramification of $v$ in $K$ following \cite[Section 3.4]{cj}. By Lemma \ref{numberGalorbits}, we obtain
\[
h(\calo_{D,c},R_s)= 2^{\omega(\gotn)+\varepsilon_\diamond}\cdot\#(\text{red}_v^{-1}(s)\cap\Gamma_{D,c}).
\]
Since the summation in (\ref{measure}) gives $\#\{z\in\Gamma_{D,c} : \text{red}_v(z)=s\}$, this shows that
\[
\mu_{D,c}^\diamond(s)=\dfrac{\#\{z\in\Gamma_{D,c} : \text{red}_v(z)=s\}}{\#\Gamma_{D,c}}=\dfrac{h(\calo_{D,c}, R_s)}{2^{\omega(\gotn)+\varepsilon_\diamond}\#\picard(\calo_{D,c})}.
\]
\end{proof}

\begin{rmk}
	In \cite[Thm.3.1]{cj}, the lifting $\theta_v$ does not induce a bijection, but rather a $\kappa$-to-$1$ surjection, where $\kappa$ has the cardinality of the Galois group $\Gal(K[c]/K[c_v])$ of the ring class field $K[c]$ as a factor, where $c_v$ is the prime-to-$v$ part of $c$. Therefore, our hypothesis that $c$ is coprime to $v$ implies the triviality of such a Galois group.
	The remaining factor of $\kappa$, which grosso modo is the cardinality of the set of $K_v^\times$-orbits of pairs of vertices of the Bruhat--Tits tree of $\PGL_2(F_v)$, is again $1$  by our coprimality hypothesis combined with \cite[Lemma 2.1(i)]{cj}.
\end{rmk}

\section{Equidistribution and Quadratic Forms}

\subsection{Automorphic Forms and Representations of Half-Integral Weight}

\subsubsection{Hilbert Modular Forms of Half-Integral Weight}

In this section we begin with some notions on Hilbert modular forms of weight $k=(k_1,...,k_d)\in\frac{1}{2}\Z^d_{\ge 0}$. We set $\Hom(F,\R)=\{\tau_1,\dots,\tau_d\}$ and denote $z=(z_1,\dots,z_d)\in\mathscr{H}^d$. We invite the reader to go through \cite{shi} as a basic reference.

\begin{rmk}
In what follows the reductive group $\GL_2$ has nothing to do with the quaternionic setting of the Shimura curve (\ref{anShim}), since the equidistribution result we aim to makes an auxiliary use of these automorphic forms.
\end{rmk}

Let $k$ be as above and let $m\in\Z_{\ge 0}^d$. Let also $U$ be a compact open subgroup of $\GL_2(\A_{F,f})$. Denote by $e(x)$ the standard additive character of $\A_F$, i.e., $e(x)=\prod_v e_v(x_v)$ where $e_v(x_v)=\exp(2\pi i\{tr_{F_v/\Q_p}(x_v)\}_p)$, for $\{a\}_p$ is the $p$-fractional part of $a\in\Q_p$. 
\\A complex {\em (automorphic) Hilbert modular form} of weight $k$ and level $U$  is a (non necessarily holomorphic) function
\[
f\colon \GL_2(\A_F)\rightarrow \C
\]

such that:

\begin{enumerate}
	\item for all $g\in\GL_2(\A_F)$, $\gamma\in\GL_2(F)$, $h\in U$,
	\[
	f(\gamma g h)= J_k(h_{\infty},z)f(g)
	\]

	where $J_k$ is the automorphy factor defined combining \cite[3.1b p.777]{shi} and \cite[1.4 p.770]{shi} and $h_\infty$ is the archimedean part of $h$;
	
	\item  there is a Whittaker--Fourier expansion
\begin{equation}\label{fwexp}
f(g)=C_f(g)+\sum_{\delta\in F^\times}W_f\left( \begin{bmatrix}
	\delta & \\ 
	&1
\end{bmatrix}g\right) 
\end{equation}

where $C_f(g)$ is a constant term and $W_f$ is a Whittaker function defined as
\[
W_f(g)=\int_{\A/F}f\left( \begin{bmatrix}
	1 &x\\
	&1
\end{bmatrix}g\right)e(-x) dx
\] 

where $dx$ denotes the obvious Haar measure induced by the adelic one.
\end{enumerate}

We say that $f$ is $\mathit{cuspidal}$ if $C_f(g)$ is identically zero.

If $f$ is holomorphic they vanish unless $y_\infty >0$. If this is the case then the Whittaker functions have the following expression
\[
W_f\left( \begin{bmatrix}
	y & x\\
	  & 1
\end{bmatrix}\right) =\tilde{a}(f,y)e_\infty(i y_\infty)e(x)
\]
involving a function $\tilde{a}(f, y)$ of $y\in\A_f^\times$ 
which are called $\mathit{Whittaker}$--$\mathit{Fourier}$ $\mathit{coefficients}$ of $f$. For a finite idele $y=(y_v)_v\in\A_{F,f}^\times$, then one classically obtain a fractional ideal
\begin{equation}\label{idele-ideal}
\mathfrak{y}=y\widehat{\calo}_F\cap F=\prod_v \gotp ^{\val_v(y_v)},
\end{equation}
where $\gotp$ corresponds to the finite place $v$ of $F$.
The Whittaker--Fourier coefficients of $f$ can be rewritten in terms of a function $a(f,\mathfrak{y})$ on the fractional ideals of $F$ vanishing on the non-integral ideals. 
For a more detailed discussion, see \cite[Prop.3.1.2]{sz2} and \cite[Section 4.1.2]{rag}.

Let $\omega\colon \A_F/F\rightarrow \C$ be a finite order character. We recall that $f$ is of character $\omega$ if $f(cg)=\omega(c)f(g)$ for all $c\in \A_F^\times$. We denote by $M_k(U,\omega)$ the space of holomorphic Hilbert modular forms of level $U$ and character $\omega$, and by $S_k(U,\omega)$ its subspace of cuspidal forms.

Let now the map $\gota\colon \GL_1(\A_{F,f})\rightarrow \GL_2(\A_{F,f})$ be defined by $a\mapsto \begin{bmatrix}
	a & 
	\\
	 & 1
\end{bmatrix}$. We consider, following \cite[(2.1.3)]{dis2}, the $q$-expansion map  on $M_k(U,\omega)$
\begin{equation}\label{expansionprinciple}
f\mapsto (\delta\to \tilde{a}(f,\delta y)).
\end{equation}
This map is injective, by the $q$-expansion principle (see \cite[Prop.2.1.1]{dis2}), for $\delta y\in \gota^{-1}(U)$.
\\

Following \cite{shi}, we also recall the classical definition of Hilbert modular forms of half-integral weight. For the comparison with the automorphic approach, see, for instance, \cite[Chapter 3]{gar}.
\\Let $f$ be a holomorphic function on $\mathscr{H}^d$, and let $\gamma=(\gamma_1,\dots,\gamma_d)\in\GL^+_2(\R)^d$ such that $\gamma_i$ has $(2,1)$ and $(2,2)$ entries $c_i$ and $d_i$ respectively. For $z\in\mathscr{H}^d$ and $k=(k_1,\dots,k_d)\in\frac{1}{2}\Z_{\ge0}$, we consider the slash operator
\[
f|_k\gamma(z)=f(\gamma z)\prod_{i=1}^d\det(\gamma)^{-1/2}(c_iz_i+d_i)^{-k_i}.
\]
Let $U$ be a congruence subgroup of $\GL_2^+(F)$, and let $\omega$ be a character of $U$ of finite order. We then say that $f$ is a {\em (classical) Hilbert modular form} of weight $k$, level $U$, character $\omega$ if
	\begin{itemize}\label{classicalhilbert}
	\item $f|_k\gamma=\omega(\gamma)f$ for all $\gamma\in U$;
	\item the Fourier expansion $f|_k\gamma=\sum_{\delta\in F}c_\delta(\gamma)\exp(2\pi i \Tr(\delta z))$ has $c_\delta(\gamma)=0$ unless $\delta=0$ or $\delta\in F^+$, for all $\gamma\in\GL_2^+(\R)$.
	\end{itemize}

\subsubsection{Metaplectic Covers}
In this section we introduce a $2$-fold cover of $\GL_2$, as in \cite{gps}. Consider the local metaplectic group $\widetilde{\SL}_2$ given by
\[
\widetilde{\SL}_2(F_v)=\begin{cases}
	& \SL_2(F_v)\times\Z/2\Z, \;\;\text{for v archimedean}\\
	& \text{the non-split central extension of}\; \SL_2(F_v)\; \text{by}\; \Z/2\Z, \; \text{otherwise}.
\end{cases}
\]
which is a double cover of $\SL_2$. Note that the central extension of $\SL_2(F_v)$ is determined by the non-trivial element of the continuous group cohomology  $\text{H}^2_{\text{cont}}(\SL_2(F_v),\Z/2\Z)\simeq\Z/2\Z$.

The group
\[
\left\lbrace \begin{bmatrix}
	a & \\
	& 1
\end{bmatrix} : a\in F_v^\times\right\rbrace 
\]
gives an action of $F_v^\times$ on $\SL_2(F_v)$ by conjugation, which uniquely lifts to an automorphism of $\widetilde{\SL}_2(F_v)$. We denote by $\widetilde{\GL}_2(F_v)$ the semi-direct product of $\widetilde{\SL}_2(F_v)$ and $F_v^\times$. Hence this locally compact group fits in the following short exact sequence
\[
1\longrightarrow\Z/2\Z\longrightarrow \widetilde{\GL}_2(F_v)\longrightarrow \GL_2(F_v)\longrightarrow 1.
\]
The center of $\widetilde{\GL}_2(F_v^\times)$ is $Z^2\times \Z/2\Z$, where 
$Z=\left\lbrace \begin{bmatrix}
	a & \\
	& a
\end{bmatrix}: a\in F_v^\times\right\rbrace $.
\\

Globally, the metaplectic group  is defined  as the quotient  $$\widetilde{\GL}_2(\A_F)=\prod'_v\widetilde{\GL}_2(\calo_{F_v})/\widetilde{Z},$$ where $\widetilde{Z}=\{\prod_v\epsilon_v\in\prod_v\Z/2\Z \;|\; \epsilon_v=0 \;\text{for an even number of}\; v\}$. 
\\

We conclude this section  by introducing a dictionary between certain Hilbert modular forms and automorphic representations of $\widetilde{\GL}_2(\A)$, following  \cite{rag} and \cite{gel}.

We say a cusp form $f$ is $\mathit{primitive}$ if it is a newform, an eigenform for all Hecke operators as defined in \cite[Section 4.1.3]{rag}, and normalized with conductor equal to $1$.

\begin{lem}\label{dictionary}
Let $\omega$ be a character of $(\calo_F/\gotn)^\times$ and let $\tilde{\omega}$ denote its adelization, i.e., a character of $\A^\times_F/ F^\times$ induced from $\omega$. Let also	$D_i$ be the discrete series representation of $\widetilde{\GL}_2(\R)$ of lowest weight $k_i\in\frac{1}{2}\Z$ with central character $a\mapsto |a|^{k_i}$.
We have a bijection 
	\[
	\Set{\begin{array}{c}
			\text{primitive  cusp}
			\text{ Hilbert modular forms} \\
			\text{of  weight $k=(k_1,\dots,k_d)\in\frac{1}{2}\Z^d$,}\\ \text{level $\gotn$, 
			nebentypus $\omega$}\end{array}}
	\longleftrightarrow
	\Set{
		\begin{array}{c}
			\text{cuspidal automorphic representations of $\widetilde{\GL}_2(\A_F)$ } \\
			\text{ of conductor $\gotn$, 
					central character $\tilde{\omega}$ }\\
			\text{whose representation at infinity is $\otimes_{j=1}^n D_{k_j-1}$}
	\end{array}}.
	\]

\end{lem}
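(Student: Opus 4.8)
The plan is to establish the correspondence by the standard adelization and de-adelization procedures, adapted to the metaplectic cover, and to extract irreducibility on the automorphic side from the newform hypothesis; throughout one works with the fixed additive character $e$, which pins down the relevant Whittaker models and the theta multiplier entering the half-integral automorphy factor. \textbf{From classical to automorphic.} Given a primitive $f$ of weight $k$, level $\gotn$, nebentypus $\omega$, I would attach to it a genuine function $\phi_f$ on $\widetilde{\GL}_2(\A_F)$ as follows: strong approximation for $\SL_2$, which lifts to $\widetilde{\SL}_2$ since the cover splits over $\SL_2(F)$ and over $\SL_2(\calo_{F_v})$ for almost all $v$, together with the Iwasawa decomposition at the archimedean places, lets one write a general element of $\widetilde{\GL}_2(\A_F)$ as $\gamma\, g_\infty\, u$ with $\gamma\in\GL_2(F)$, $g_\infty$ archimedean and $u$ in a compact-open subgroup $\widetilde{K}_0(\gotn)$, and one sets $\phi_f(\gamma g_\infty u)=\tilde\omega(u)\,(f|_k g_\infty)(\mathbf i)$, where $\mathbf i=(i,\dots,i)\in\mathscr H^d$ and $|_k$ is the slash operator built from the automorphy factor $J_k$ of \cite{shi} (incorporating, at half-integral weight, the theta multiplier). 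I would then verify: (i) well-definedness, i.e.\ independence of the decomposition, which reduces to the transformation law of $f$ and the cocycle identity for $J_k$; (ii) left $\GL_2(F)$-invariance and genuineness; (iii) that the central character is $\tilde\omega$; (iv) right $\widetilde{K}_0(\gotn)$-finiteness and $\mathfrak z$-finiteness, with the archimedean component lying in the $K$-type generating the discrete series $D_{k_j-1}$ at $\tau_j$ — here holomorphy of $f$ and the lowest-weight normalization are precisely what enter; (v) cuspidality, from $C_f\equiv 0$. Right translation then makes the span of $\phi_f$ into an automorphic representation $\pi_f$, and the newform/Hecke-eigenform hypothesis, via local newvector theory for $\widetilde{\GL}_2$ and the multiplicity-one statements of \cite{gps1,gps}, forces $\pi_f$ to be irreducible of conductor exactly $\gotn$, central character $\tilde\omega$, and $\pi_{f,\infty}\simeq\bigotimes_j D_{k_j-1}$.

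\textbf{From automorphic to classical.} Conversely, given such a $\pi$, local newvector theory for the metaplectic group singles out, up to scalar, a vector $\phi^{\mathrm{new}}\in\pi$ fixed by $\widetilde{K}_1(\gotn)$ and of the prescribed weight at infinity. De-adelizing — i.e.\ setting $f(z):=\phi^{\mathrm{new}}(g_\infty)\,J_k(g_\infty,\mathbf i)$ whenever $g_\infty\in\GL_2^+(\R)^d$ satisfies $g_\infty(\mathbf i)=z$ — produces a function on $\mathscr H^d$; its modular transformation law follows by reversing (i)--(ii), holomorphy and the correct weight at each $\tau_j$ follow from $\pi_\infty\simeq\bigotimes_j D_{k_j-1}$ being a lowest-weight discrete series (its minimal $K$-vector satisfies the Cauchy--Riemann equations), cuspidality of $f$ follows from that of $\pi$, and the newness, eigenform property and normalization of $f$ are read off from $\phi^{\mathrm{new}}$ together with the unramified local components. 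Passing through the global Whittaker functional attached to $e$, the Whittaker--Fourier expansion \eqref{fwexp} of $\phi^{\mathrm{new}}$ matches the classical Fourier expansion of $f$, identifying the coefficients $\tilde a(f,\cdot)$ with the values of $W_{\phi^{\mathrm{new}}}$.

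\textbf{Inverseness and the main obstacle.} That the two constructions are mutually inverse I would check by tracking a single matching datum — most economically, the Whittaker--Fourier coefficients on the classical side against the Whittaker function on the automorphic side — and invoking the $q$-expansion principle \eqref{expansionprinciple}, which guarantees a primitive form is determined by its coefficients, so equality of coefficients forces equality of forms. The principal difficulty is the genuinely metaplectic nature of $\widetilde{\GL}_2$: there is no canonical splitting, so the cocycle bookkeeping for $J_k$, the theta multiplier, and the compatibility of the local splittings over $\calo_{F_v}$ must be handled with care; and, more seriously, since multiplicity one \emph{fails} for $\widetilde{\GL}_2$ in general, the assertion that a primitive $f$ generates an \emph{irreducible} representation — so that one obtains a genuine bijection rather than a finite-to-one correspondence — must be extracted from newvector theory and the precise results of \cite{gps1,gps}, which is exactly where the ``newform'' hypothesis and the conductor normalization do real work. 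Over a general totally real $F$ one must moreover carry a fixed set of representatives for the relevant double cosets (or assume narrow class number one) through the de-adelization, so that strong approximation is applied correctly.
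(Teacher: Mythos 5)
Your proposal is correct and follows essentially the same route as the paper's own (sketched) proof: adelize the primitive form $f$, let $\Pi(f)$ be the span of its right $\widetilde{\GL}_2(\A_F)$-translates inside the cuspidal spectrum with irreducibility extracted from the primitivity/newform hypothesis as in the adaptation of \cite{rag}, and recover the classical form from a cuspidal $\Pi$ by a distinguished choice of vectors in the local Whittaker models, matching Whittaker--Fourier coefficients. The only cosmetic difference is bookkeeping: where you invoke metaplectic newvector theory, the $q$-expansion principle, and lowest-weight considerations at infinity, the paper simply cites \cite[Thm.4.7]{rag} for irreducibility and \cite[Section 4.1]{gel} for the archimedean classification, so no change of substance is involved.
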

\begin{proof}
	In view of the metaplectic theory developed in \cite{gel}, the proof consists of an adaptation of	\cite[Thm.1.4]{rag}, so we just summarize the above correspondence. In particular, at the archimedean places, the result follows from \cite[Section 4.1]{gel} which consists of the metaplectic counterpart of Langlands' classification for $\GL_2(\R)$.
	
	Given a primitive Hilbert cusp form $f$ of half-integral weight $k$, we consider the space $H_f$ spanned by right $\widetilde{\GL}_2(\A_F)$-translations of $f$. We thus obtain a representation $\Pi(f)$ on $H_f$ which occurs in the regular representation on the cusp forms. The representation $\Pi(f)$ is irreducible; this can be proved verbatim as in \cite[Thm.4.7]{rag}.
	
	On the other hand, let $\Pi$ be a cuspidal automorphic representation as in the left-hand side of the bijection and let $V_\Pi$ be its representation space. The Whittaker model of $\Pi$ is isomorphic to $V_\Pi$ and for  specific choices of vectors in the local Whittaker model of $\Pi_v$ one can determine uniquely an element $f$ in $V_\Pi$ giving the desired Hilbert cusp form.
\end{proof}

\subsubsection{Weil Representations}\label{weilrepr}

For each $v$, let here $\delta_v\in F_v$ be the conductor of $e_v$. For $v$ finite, $\delta_v\calo_{F_v}$ is the maximal fractional ideal over which $e_v$ is trivial. On the other hand, for $v$ a real place, $e_v(x)=\exp(2\pi i \delta_vx)$. Moreover, we have that the absolute value of $\delta:=\prod_v\delta_v\in \A_F^\times$ equals the discriminant of $F$; in symbols, $|\delta|=d_F$.
\\

Let $\mathtt{S}(F_v^n)$ denote the space of $\C$-valued Schwartz--Bruhat functions. For $f\in \mathtt{S}(F^n_v)$, we define its Fourier transform as in \cite[p.36]{gel} by
\[
\widehat{f}(x)=\int_{F^n_v} f(y)e_v(2xy)dy
\]
where $dy$ is the normalized Haar measure so that $\widehat{\widehat{f\,}}(x)=f(-x)$.

Let $Q$ be a quadratic form on $F^n$, and let $\gamma_v$ be a eight root of unity, and $\gamma_{v,a}$ its translate by $a\in F_v^{\times}$. The $\mathit{local}$ Weil representation $r(e_v)$ is the unique representation of $\widetilde{\SL}_2(F_v)$ that can be realized on $\mathtt{S}(F^n_v)$ by the following formulae:
\begin{itemize}
	\item $r(e_v)\begin{bmatrix}
		& 1\\
		-1&
	\end{bmatrix}f(x)=\gamma_v\widehat{f}(x)$;
	\item  $r(e_v)\begin{bmatrix}
		1 & t\\
		& 1
	\end{bmatrix}f(x)=e_v(tQ(x))f(x)$ for $t\in F_v$;
	\item $r(e_v)\begin{bmatrix}
		a & \\
		& a^{-1}
	\end{bmatrix}f(x)=|a|^{1/2}_v\dfrac{\gamma_v}{\gamma_{v,a}}f(x)$;
	\item $r(e_v)(\varepsilon)f(x)=\varepsilon f(x)$ for $\varepsilon$ is a second root of unity.
\end{itemize}

Note that these formulae uniquely define the Weil representation via the Bruhat decomposition of $\SL_2$.
\\

Globally, for a totally real field $F$ and a quadratic space $(V,Q)$ over $F$, consider the non-trivial character $e$ on $\A_F/F$. Then the Weil representation is obtained as the restricted tensor product of the local Weil representations $r(e_v)$'s.
\\

Consider now the quadratic space $(V,Q)$ over $F_v$, for $Q$ a $n$-ary form. By \cite[Ex.2.21]{gel} each quadratic form $Q$ corresponds to a Weil representation. Whenever in need to emphasize this we will denote the Weil representation corresponding to $Q$ by $r_Q$.

\subsubsection{Theta-representations} We now focus on the unary case. The  local Weil representation $r(e_v)$ decomposes as a direct sum of two irreducible subrepresentations on the subspaces of  even and  odd Schwartz--Bruhat functions. Thus, for $\chi_v$ character of $F_v^\times$, we can tensor with the even or odd part of $r(e_v)$, according to the parity of $\chi_v$. By inducing this representation to $\widetilde{\SL}_2(F^n_v)$ we obtain an irreducible admissible representation $r(\chi_v)$ independent of $e_v$, which is unramified whenever $\chi_v$ is unramified, i.e., for all but finitely many $v$.

We now extend $r(e)$ to a representation of $\widetilde{\GL}_2(F^n_v)$, so to remove the dependence on $e_v$.

Consider the pullback of $\{g\in\GL_2(F^n_v) : \det(g)\in(F_v^\times)^2 \}$ in $\widetilde{\GL}_2(F^n_v)$ taking the form 
\[
\widetilde{G}^*:=\widetilde{\SL}_2(F_v)\rtimes \left\lbrace  \begin{bmatrix}
	1 & \\
	& a^2
\end{bmatrix}: a\in F_v^\times \right\rbrace .
\]
We then extend $r(e_v,\chi_v)$ to $\widetilde{G}^*$ by setting
\[
r(e_v,\chi_v)\begin{bmatrix}
	1 & \\
	& a^2
\end{bmatrix}f(x)=\chi_v(a)|a|^{-1/2}_vf(a^{-1}x).
\]

Moreover, we have that $r(e_v,\chi_v)$ is an irreducible, admissible representation of $\widetilde{G}^*$.


Inducing up $r(e_v,\chi_v)$ to $\widetilde{\GL}_2(F^n_v)$ produces a representation $r(\chi_v)$, which is irreducible and admissible and independent of $e_v$ (see \cite[1.3, p.150]{gps1}).
\\

We conclude by the global setting. Recall that an automorphic representation of half-integral weight is an irreducible admissible representation of $\widetilde{\GL}_2(\A_F)$ which is isomorphic to a subrepresentation of the space of automorphic forms on $\widetilde{\GL}_2(\A_F)$. 

For every character $\chi$ of $\A^\times/F^\times$, 
the (global) theta-representation is 
\[
r(\chi)=\otimes_v'r(\chi_v),
\]
namely the restricted tensor product of the local Weil representations. This gives an automorphic representation of half-integral weight. Moreover, if there exists at least a place $v$ of $F$ such that $\chi$  is odd (i.e., $\chi_v(-1)=-1$), then $r(\chi)$ is cuspidal (see \cite[Prop.8.1.1]{gps1}).

Let us introduced two important technical definitions. We say that an irreducible admissible representation $\overline{\pi}_v$ of $\widetilde{\GL}_2(F_v)$ is $v$-$\mathit{distinguished}$ if its Whittaker model, which always exists, is unique. Globally, an irreducible admissible representation $\overline{\pi}=\otimes'\overline{\pi}_v$ of $\widetilde{\GL}_2(\A_F)$ is distinguished if it is $v$-distinguished at all places $v$.
An automorphic representation of $\widetilde{\GL}_2(\A_F)$ is called $\mathit{genuine}$ if it does not factor through $\GL_2(\A_F)$.

\begin{lem}\label{thetarepr}
	There is a bijection 
	\[
	\Set{\begin{array}{c}
			\text{Hecke characters of $F$}
	\end{array}}
	\longleftrightarrow
	\Set{
		\begin{array}{c}
			\text{genuine, distinguished, automorphic}\\
			\text{ representations of $\widetilde{\GL}_2(\A_F)$}
	\end{array}}
	\]	
	attaching to every Hecke character $\chi$ the theta-representation $r(\chi)$.
\end{lem}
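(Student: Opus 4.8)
The plan is to construct the bijection in both directions and verify the three adjectives (genuine, distinguished, automorphic) are exactly matched. First I would recall from the previous section that for a Hecke character $\chi$ of $\A_F^\times/F^\times$ the theta-representation $r(\chi)=\otimes_v' r(\chi_v)$ is defined as a restricted tensor product of the local representations $r(\chi_v)$ obtained by inducing $r(e_v,\chi_v)$ from $\widetilde G^*$ up to $\widetilde{\GL}_2(F_v^n)$ (with $n=1$). Each $r(\chi_v)$ is irreducible, admissible, independent of $e_v$, and unramified for almost all $v$ (by \cite[1.3, p.150]{gps1} as recalled above), so $r(\chi)$ is a well-defined irreducible admissible representation; the fact that it is automorphic, i.e. realized in the space of automorphic forms on $\widetilde{\GL}_2(\A_F)$, is precisely the content of the theta-series construction of \cite{gps1}. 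This gives the map $\chi\mapsto r(\chi)$ and shows it lands in the target set once we check the two adjectives below.

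Next I would verify that $r(\chi)$ is genuine and distinguished. Genuineness is immediate from the extension formula $r(e_v,\chi_v)\begin{bmatrix}1&\\&a^2\end{bmatrix}f(x)=\chi_v(a)|a|_v^{-1/2}f(a^{-1}x)$ together with the action $r(e_v)(\varepsilon)f(x)=\varepsilon f(x)$ of the nontrivial central element $\varepsilon$ of $\widetilde{\SL}_2$: the central $\Z/2\Z$ acts by the nontrivial character, so $r(\chi)$ does not factor through $\GL_2(\A_F)$. For distinguishedness one uses that each local $r(\chi_v)$ is a (twisted) Weil/theta representation on even or odd Schwartz--Bruhat functions, and such representations have a unique Whittaker model — this is the local uniqueness-of-Whittaker-model statement for the metaplectic group proved in \cite{gps1} (the analogue of the fact that irreducible constituents of the Weil representation are among the "distinguished" ones). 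Hence $r(\chi)=\otimes_v' r(\chi_v)$ is $v$-distinguished at every $v$, i.e. distinguished in the sense of the definition just given.

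For the reverse map I would take a genuine distinguished automorphic representation $\overline\pi=\otimes_v'\overline\pi_v$ of $\widetilde{\GL}_2(\A_F)$ and recover its "Shimura parameter." Locally, the classification of genuine irreducible admissible representations of $\widetilde{\GL}_2(F_v)$ distinguishes the theta (Weil) representations from the rest by the dimension of the Whittaker space or, equivalently, by their local Shimura lift: the distinguished genuine $\overline\pi_v$ with a one-dimensional Whittaker model are exactly the local theta-representations $r(\chi_v)$, and $\chi_v$ is determined by $\overline\pi_v$ (via the central character on $Z^2\times\Z/2\Z$ and the Shimura correspondence of \cite{gps1}). Assembling these local characters $\chi_v$ into a global idele-class character requires that $\chi=\otimes\chi_v$ be automorphic; this is where one invokes that $\overline\pi$ itself is automorphic together with the global Shimura correspondence (equivalently, the converse theorem of \cite{gps1}), which forces the collection $(\chi_v)$ to come from a genuine Hecke character $\chi$ with $\overline\pi\simeq r(\chi)$. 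Finally I would check the two maps are mutually inverse — immediate from the local reconstruction — and that distinct Hecke characters give inequivalent theta-representations, which follows because $\chi$ is recovered from $r(\chi)$ by the local recipe.

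I expect the main obstacle to be the reverse direction: showing that an abstract genuine distinguished \emph{automorphic} representation is globally of the form $r(\chi)$ and not merely locally so at every place. This is exactly the hard input — the global characterization of theta-representations inside the automorphic spectrum of $\widetilde{\GL}_2$ — and it is supplied by the Gelbart--Piatetski-Shapiro theory \cite{gps1} (uniqueness of Whittaker models plus the metaplectic converse theorem); the honest thing to do in the proof is to cite \cite[Prop.8.1.1 and the surrounding results]{gps1} for this step and to treat genuineness, distinguishedness, and the local reconstruction of $\chi_v$ as the routine verifications sketched above. Everything else (almost-everywhere unramifiedness, well-definedness of the restricted tensor product, mutual inverseness) is formal.
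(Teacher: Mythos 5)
Your proposal is correct and matches the paper's approach: the paper's entire proof is a citation to \cite[Thm.~A, p.147]{gps1}, and your sketch simply expands the routine verifications (genuineness, distinguishedness, local reconstruction of $\chi_v$) while deferring the hard global characterization to the same Gelbart--Piatetski-Shapiro theory. The only nit is that the precise reference for that global step is Theorem~A of \cite{gps1} itself (Prop.~8.1.1 is the cuspidality criterion for $r(\chi)$), so cite that rather than ``Prop.~8.1.1 and surrounding results.''
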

\begin{proof}
	See \cite[Thm.A p.147]{gps1}.
\end{proof}

\begin{rmk}\label{Unary}
	We will make extensive use of the fact that $r(\chi)$ generalizes the construction that associates to a Dirichlet character $\chi$ on $\Z/N\Z$ the classical theta series 
	\[
	\theta(z,\chi)=\sum_{n\ge 1}\chi(n)nq^{n^2z}.
	\]
	It is well known that this gives a primitive modular form of half-integral weight, level $4N^2$ and character $\chi$.
	Moreover, for $t$ squarefree we have that the subspace $\calu(N,\chi)$ of cuspidal modular form of weight $3/2$ whose Shimura lift is $\mathit{not}$ cuspidal is spanned by $\theta(z,\chi)$'s (see, for instance, \cite[p.365]{hanke}).
\end{rmk}

\subsubsection{Eisenstein Series}

Let $B$ denote the parabolic group of $\GL_2$, i.e., its Borel subgroup, with Levi decomposition $B=NM$ for $N$ being the unipotent subgroup and $M$ the Levi subgroup. Denote also by $\GL_2(\A_F)^1$ the elements of $\GL_2(\A_F)$ whose determinant has norm $1$, and by $M(\A_F)^1$ the matrices in the Levi subgroup whose entries are of norm $1$.
\\

For $\varphi\in \calc^\infty_c(N(\A_F)M(\A_F)\backslash\GL_2(\A_F)^1)$ we define the $\mathit{pseudo}$-$\mathit{Eisenstein}$ series as
\[
\Psi(\varphi,g)=\sum_{\gamma\in B(F)\backslash \GL_2(F)}\varphi(\gamma g).
\]
Note that $\Psi(\varphi,g)$ is locally finite\footnote{I.e.,  for $g$ in a fixed compact of $\GL_2(\A_F)$, the sum defining $\Psi(\phi, g)$ has only finitely many summands.}, so it converges and defines and element of $\calc^\infty_c[\GL_2(\A_F)^1]$ (see \cite[2.7.1]{gar}).

Let now $\chi$ be a character of $M(F)\backslash M(\A_F)^1$. Note that any character of $[M(\A_F)^1]$ is of the form $\nu^s\chi$ where $\nu^s\colon \begin{bmatrix}
	\delta(y) &\\
	& 1
\end{bmatrix}\mapsto |y|^s$, where $\delta\colon (0,\infty)\rightarrow \A_F^\times$ is the diagonal embedding into the archimedean component of the ideles by $\delta(y)=(\dots,y^\frac{1}{d_v},\dots)$ with $d_v$ the local degree at $v$. 
We thus consider the following unramified principal series representation of $\GL_2$ given by parabolic induction 
\[
I_{s,\chi}=\{f\in\calc_c^\infty(N(\A_F)M(F)\backslash \GL_2(\A_F)^1) : f(nmg)=(\nu^s\chi)(m)f(g)\;\text{for all}\; n\in N(\A_F), m\in M(\A_F) \}.
\]

The $\mathit{Eisenstein}$ series is defined as
\[
E(f,g)=\sum_{\gamma\in B(F)\backslash \GL_2(F)}f(\gamma g),
\]
for $f\in I_{s,\chi}$.
Note that the Eisenstein series is not a $\call^2$-function\footnote{The obstruction to this consists of the constant term.}, while the pseudo-Eisenstein series is. Moreover, the pseudo-Eisenstein series can be expressed  in terms of Eisenstein series by integral converging uniformly absolutely on compacts in $\GL_2(F)\backslash\GL_2(\A_F)^1$. For the pointwise formula, see \cite[Thm.2.11.1]{gar}.

\subsubsection{Automorphic Shimura Lifting}\label{automorphicShimura}

In the local case, we have that an irreducible admissible representation $\pi_v$ of $\GL_2(F_v)$ is the local Shimura image of $\overline{\pi}_v$, in symbols, $\shi(\overline{\pi}_v)=\pi_v$, if
\begin{itemize}
	\item for $a\in F_v^\times$, we have $$\omega_v(a)=\overline{\omega}_v(a^2),$$ 
	where $\omega_v,\overline{\omega}_v$ are the central characters  of $\pi_v$ and $\overline{\pi}_v$ respectively;
	\item for any character $\chi$ of $F_v^\times$, equalities between automorphic $L$-functions and $\varepsilon$-factors hold respectively as in \cite[Section 7.1]{gps}.
\end{itemize}

The Shimura lift of $\overline{\pi}$, if it exists, is unique\footnote{Moreover, as suggested by its notation, it does not depend on $e_v$, while the $\varepsilon$-factor does.}.
For a $L$-functions-free approach to the Shimura lifting, see \cite[Section 5]{fli}.
\\

As usual, we define the global Shimura lifting by putting the local pieces together. Let $\overline{\pi}=\otimes'_v\overline{\pi}_v$ be an irreducible admissible representation of $\widetilde{\GL}_2(\A_F)$.
We say that the irreducible admissible representation $\pi$ of $\GL_2(\A_F)$ is the $\textit{Shimura}$ $\textit{image}$ of $\overline{\pi}$, in symbols, $\shi(\overline{\pi})=\pi$, if, for every place $v$, the same holds locally, i.e., $\shi(\overline{\pi}_v)=\pi_v$.
\\

Let now $\chi=\prod_v\chi_v$ be a Hecke character of $F$, and consider the case $\overline{\pi}=r(\chi)$. By \cite[Thm.15.1]{gps} we obtain that $\shi(r(\chi_v))$ is  $1$-dimensional and unramified for almost every place $v$, and so
\begin{equation}
	\shi(r(\chi))=\otimes'_v\shi(r(\chi_v))
\end{equation}
is automorphic but not cuspidal. In particular, $\shi(\overline{\pi})$ is cuspidal if and only if $\overline{\pi}$ is not in the image of the correspondence of Lemma \ref{thetarepr}.

In the archimedean case $F_\infty$, then $\overline{\pi}_\infty$ is a discrete series representation of lowest weight $k/2$ and so $\shi(\overline{\pi}_\infty)$ corresponds to a discrete series representation of lowest weight $k-1$ (see \cite[Prop.4.8]{gel}).
\\

Lastly, we extend the Serre--Stark theorem (and the considerations of Remark (\ref{Unary})) to this automorphic setting as in \cite{gps1}. This means that the theta representations $r(\chi)$'s exhaust certain automorphic representations of weight $1/2$, which are defined to be such that there exists one archimedean place such that $\overline{\pi}_\infty$ is the even part of the Weil representation.

\begin{lem}
	Suppose that $\overline{\pi}$ is a cuspidal representation of $\widetilde{\GL}_2(\A_F)$ of weight $1/2$. Then there exists a Hecke character $\chi$ of $F$ such that $\overline{\pi}=r(\chi)$.
\end{lem}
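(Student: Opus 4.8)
The plan is to reduce the statement to the already-established Serre--Stark type result in the metaplectic setting, namely Lemma \ref{thetarepr} together with the Shimura-lifting dictionary of Section \ref{automorphicShimura}. The key observation is that a cuspidal $\overline{\pi}$ of weight $1/2$ is severely constrained: at the distinguished archimedean place $\overline{\pi}_\infty$ is (by hypothesis) the even part of the Weil representation $r(e_\infty)$, which is precisely the local component $r(\chi_\infty)$ of a theta-representation attached to an even character $\chi_\infty$ of $F_\infty^\times$. So the archimedean components already look like those of a $r(\chi)$; the work is to promote this to a global statement.

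First I would recall, following \cite{gel} and \cite{gps1}, that any genuine cuspidal automorphic representation of $\widetilde{\GL}_2(\A_F)$ which is \emph{not} in the image of the correspondence of Lemma \ref{thetarepr} has a Shimura lift $\shi(\overline{\pi})$ that is cuspidal on $\GL_2(\A_F)$ (this is exactly the last assertion of Section \ref{automorphicShimura}). The archimedean analysis recalled there — $\shi$ sends lowest weight $k/2$ to lowest weight $k-1$ via \cite[Prop.4.8]{gel} — shows that if $\overline{\pi}$ has weight $1/2$ then $\shi(\overline{\pi})_\infty$ would be a representation of ``weight $0$'', i.e.\ not a discrete series; more precisely, the even Weil representation is exactly the borderline case where the Shimura lift at infinity fails to be square-integrable (tempered principal series / limit). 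I would make this precise by invoking the explicit local Shimura correspondence at the archimedean place from \cite[Section 4]{gel}: the even part of $r(e_\infty)$ lifts to a \emph{one-dimensional} (or non-generic) representation, not to a discrete series. Hence $\shi(\overline{\pi})$ cannot be cuspidal (a cuspidal automorphic representation of $\GL_2$ over a totally real field must be infinite-dimensional, and its archimedean components are generic). By the dichotomy above, $\overline{\pi}$ must then lie in the image of Lemma \ref{thetarepr}, i.e.\ $\overline{\pi}=r(\chi)$ for some Hecke character $\chi$ of $F$.

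To conclude I would still need to check the genuineness and distinguishedness hypotheses of Lemma \ref{thetarepr} are automatic here: genuineness is immediate since $\overline{\pi}$ is a weight-$1/2$ representation of the nontrivial metaplectic cover (a representation factoring through $\GL_2(\A_F)$ cannot have an archimedean component equal to the even Weil representation), and distinguishedness (uniqueness of the Whittaker model) holds because $\overline{\pi}$ is cuspidal, hence globally generic with a well-defined Whittaker--Fourier expansion as in \eqref{fwexp}, and the local uniqueness at each place for such small-weight representations is part of the Gelbart--Piatetski-Shapiro theory \cite{gps1}. Feeding $\overline{\pi}$ into the bijection of Lemma \ref{thetarepr} then produces the desired $\chi$.

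The main obstacle I anticipate is the archimedean local argument: one has to verify carefully that the even Weil representation of $\widetilde{\GL}_2(\R)$ is \emph{not} the Shimura preimage of a discrete series, equivalently that its Shimura lift is a non-tempered or finite-dimensional representation, so that a cuspidal $\overline{\pi}$ of weight $1/2$ cannot have cuspidal Shimura lift. This is exactly the phenomenon that makes weight $1/2$ (and $3/2$ with non-cuspidal Shimura lift) exceptional, and pinning it down rigorously requires the explicit description of $\shi$ at real places in \cite[Section 4.1, Prop.4.8]{gel}; everything else is a formal consequence of the dichotomy and of Lemma \ref{thetarepr}. A secondary point to be careful about is that the hypothesis only requires $\overline{\pi}_\infty$ to be the even Weil representation at \emph{one} archimedean place — but since $\shi$ is defined place-by-place, a single bad archimedean place already obstructs cuspidality of $\shi(\overline{\pi})$, so this weaker hypothesis suffices.
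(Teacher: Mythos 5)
Your core argument is essentially the route the paper itself gestures at (its proof is precisely the two ingredients you use: the Shimura lifting and the classification of Lemma \ref{thetarepr}): if $\overline{\pi}$ were not a theta-representation, its Shimura lift would be cuspidal by the dichotomy recalled at the end of Section \ref{automorphicShimura}, while its component at the distinguished archimedean place, being the lift of the even Weil representation, is one-dimensional and hence non-generic, contradicting the global genericity (hence genericity of every local component) of cuspidal representations of $\GL_2(\A_F)$; therefore $\overline{\pi}$ lies in the image of the correspondence and equals some $r(\chi)$. Two points, however, need correction.

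First, your concluding paragraph is both unnecessary and wrongly justified. Once the dichotomy places $\overline{\pi}$ in the image of the correspondence of Lemma \ref{thetarepr}, the proof is finished; there is nothing left to feed into the bijection. More importantly, the claim that distinguishedness follows from cuspidality is false and circular: for genuine representations of $\widetilde{\GL}_2$ Whittaker functionals are \emph{not} unique in general, and distinguishedness is exactly the uniqueness property that, by \cite[Thm.A]{gps1}, characterizes the theta-representations. If cuspidality implied distinguishedness, every genuine cusp form would be a theta series, contradicting the existence of cuspidal weight-$3/2$ forms orthogonal to the unary theta series whose Shimura lift is cuspidal --- precisely the forms the paper relies on in Proposition \ref{3.10} and Lemma \ref{subcon}. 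Delete that verification rather than attempt to repair it.

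Second, in the archimedean step the fact you need is that the local Shimura image of the even Weil representation is one-dimensional (equivalently, non-generic), as the paper asserts via \cite[Thm.15.1]{gps}; your fallback formulation ``non-tempered or finite-dimensional'' is too weak, because a cuspidal representation of $\GL_2(\A_F)$ may well have a non-tempered or non-discrete-series (principal series) archimedean component, so only the non-generic/finite-dimensional alternative yields the contradiction. Also note that \cite[Prop.4.8]{gel} concerns discrete series of lowest weight $k/2$ and does not apply to the weight-$1/2$ even Weil representation, so the correct reference for this step is the one-dimensionality of $\shi(r(\chi_v))$, not the weight-shift statement.
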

\begin{proof}
	This follows as a consequence of the surjective part of Lemma \ref{thetarepr} and the Shimura lift.
\end{proof}

\subsubsection{Langlands Spectral Decomposition}

In this section we mainly follow \cite{arth} and \cite{lab} as references.

Let $\call^2[\GL_2]$ denote the Hilbert space of functions on $[\GL_2]$ that are square-integrable with respect to the natural measure, and denote  by $R$ the right regular representation of $\GL_2(\A_F)$ on $\call^2[\GL_2]$. This representation is isomorphic to the completion of a discrete sum of unitary irreducible representations of $\GL_2(\A_F)$.

Consider $\GL_2(\A_F)^1=\{g\in\GL_2(\A_F) : |\det g|_\A=1\}$ where $|\cdot|_\A$ is the adelic norm. Denote by $\call$ the space $\call^2(\GL_2(F)\backslash \GL_2(\A_F)^1))$. We want to introduce the spectral decomposition of $\call$ under $R$.
\\

Let us introduce the $\mathit{discrete}$ spectrum of $\call$ which we call $\call_{\text{disc}}$. It is defined as a Hilbert sum of irreducible subspaces of $\call$, i.e., the closed subspace generated by the irreducible subrepresentations of $\GL_2(\A_F)^1$ in $\call$. Thus $\call_{\text{disc}}$ is the part of $\call^2[\GL_2(\A_F)^1]$ whose spectral decomposition looks like the decomposition of $R$. The complement of $\call_{\text{disc}}$ in $\call$ is called the $\mathit{continuous}$ spectrum, and denoted by $\call_{\text{cont}}$.

Let $B$ be the Borel subgroup of $\GL_2$ and $N$ its unipotent subgroup. Consider the square-integrable function on $[N]=N(\A_F)\cap \GL_2(F)\backslash N(\A_F)$ defined for almost all $x\in\GL_2(\A_F)^1$ as
\[
n\mapsto \varphi(nx)
\]
for $\varphi\in\call$.

We define the $\mathit{constant}$ $\mathit{term}$ along $B$ the function $\varphi_B$ defined as
\[
\varphi_B(x)=\int_{[N]}\varphi(nx)dx,
\]

where $dx$ is the Haar measure on $[N]\simeq F\backslash\A_F$. The function $\varphi$ is said to be $\mathit{cuspidal}$ if its constant term along $B$ vanishes almost everywhere. We thus  define 
$\call_{\text{cusp}}$
to be the subspace of cuspidal functions $\varphi\in\call$. It can be decomposed as the Hilbert sum of unitary irreducible representations of $\GL_2(\A_F)^1$ with finite multiplicities\footnote{In fact, its multiplicities are either $0$ or $1$.}.

Next Lemma yields a construction of the orthogonal complement of $\call_{\text{cusp}}$ in $\call$.

\begin{lem}\label{3.17}
	The $\call^2$-closure of the set of pseudo-Eisenstein series spans the orthogonal complement of $\call_{\text{cusp}}$.	
\end{lem}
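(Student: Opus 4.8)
The plan is to identify $\call_{\text{cusp}}^\perp$ explicitly by computing inner products against pseudo-Eisenstein series, following the standard Langlands theory as developed in \cite{gar} and \cite{lab}. First I would recall that the pseudo-Eisenstein series $\Psi(\varphi, g)$ for $\varphi \in \calc^\infty_c(N(\A_F)M(\A_F)\backslash \GL_2(\A_F)^1)$ lie in $\call = \call^2(\GL_2(F)\backslash \GL_2(\A_F)^1)$ and span a subspace whose closure I denote $\call_{\text{Eis}}$; the claim is $\call_{\text{Eis}} = \call_{\text{cusp}}^\perp$. The key computational input is the adjunction (unwinding) identity: for $\phi \in \call$,
\[
\langle \phi, \Psi(\varphi, \cdot)\rangle_{\call} = \int_{N(\A_F)M(F)\backslash \GL_2(\A_F)^1} \phi(g)\,\overline{\varphi(g)}\, dg = \langle \phi_B, \varphi\rangle,
\]
obtained by substituting the definition of $\Psi$ as a sum over $B(F)\backslash \GL_2(F)$ and collapsing the quotient, then recognizing that the resulting integrand only depends on $\phi$ through its constant term $\phi_B$ along $B$ (integrating out the inner $[N]$-variable). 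Absolute convergence here is guaranteed because $\varphi$ has compact support modulo $N(\A_F)M(\A_F)$ and $\Psi(\varphi,\cdot)$ is locally finite, as recalled in the excerpt.

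Given this identity, the argument is a two-line equivalence. If $\phi \in \call_{\text{cusp}}$, then $\phi_B = 0$ almost everywhere by definition, so $\langle \phi, \Psi(\varphi,\cdot)\rangle = 0$ for every test function $\varphi$; hence $\call_{\text{cusp}} \subseteq \call_{\text{Eis}}^\perp$, equivalently $\call_{\text{Eis}} \subseteq \call_{\text{cusp}}^\perp$ after taking closures (orthogonal complements are closed). Conversely, suppose $\phi \perp \Psi(\varphi,\cdot)$ for all $\varphi \in \calc^\infty_c(N(\A_F)M(\A_F)\backslash \GL_2(\A_F)^1)$. Then $\langle \phi_B, \varphi\rangle = 0$ for all such $\varphi$, and since these $\varphi$ are dense in $\call^2(N(\A_F)M(\A_F)\backslash \GL_2(\A_F)^1)$, we conclude $\phi_B = 0$ almost everywhere, i.e. $\phi \in \call_{\text{cusp}}$. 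Thus $\call_{\text{Eis}}^\perp \subseteq \call_{\text{cusp}}$, whence $\call_{\text{cusp}}^\perp \subseteq \call_{\text{Eis}}$. Combining the two inclusions gives $\call_{\text{Eis}} = \call_{\text{cusp}}^\perp$, which is the assertion.

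The one point requiring genuine care — and the step I expect to be the main obstacle — is justifying the unwinding identity rigorously: namely that the interchange of summation (over $B(F)\backslash\GL_2(F)$) and integration (over $\GL_2(F)\backslash\GL_2(\A_F)^1$) is valid, and that the resulting integral over $N(\A_F)M(F)\backslash\GL_2(\A_F)^1$ converges. Because $\phi \in \call^2$ need not be integrable and $\Psi(\varphi,\cdot)$ decays slowly (it is in $\call^2$ but the constant-term truncation is delicate), one cannot naively invoke Fubini; the clean route is to approximate $\phi$ by compactly supported functions, or better, to cite the rigorous treatment in \cite[Thm.2.11.1]{gar} or the reduction theory bounds controlling the growth of $\Psi(\varphi,\cdot)$ on Siegel sets. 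A secondary subtlety is the density claim: one must check that restrictions of elements of $\call$ to the smaller quotient $N(\A_F)M(\A_F)\backslash\GL_2(\A_F)^1$, or rather the constant terms $\phi_B$, range over (a dense subset of) $\call^2$ of that quotient, which follows from surjectivity of the constant-term map onto an appropriate space plus standard density of smooth compactly supported functions. Modulo these standard analytic verifications, the proof is the short orthogonality computation above.
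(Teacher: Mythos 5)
The paper gives no argument of its own here---it simply cites \cite[2.7.2]{gar}---and your unfolding argument is precisely the standard proof of that cited result, so in spirit you are reproducing the intended proof. The forward direction and the adjunction identity $\langle \phi,\Psi(\varphi,\cdot)\rangle=\langle\phi_B,\varphi\rangle$ are fine; note moreover that your main worry is unfounded: with $\varphi$ compactly supported modulo the indicated subgroup, $\Psi(\varphi,\cdot)$ is not slowly decaying but actually lies in $\calc_c^\infty[\GL_2(\A_F)^1]$ (as the paper itself records), so the unfolding is justified by plain absolute convergence of a locally finite sum against an $\call^2$ function on a compact support region; no truncation or Siegel-set estimate is needed.

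The one real defect is in your converse direction, where two different quotients get conflated. The constant term $\phi_B$ is left-invariant under $N(\A_F)$ and $B(F)$, hence lives on $N(\A_F)M(F)\backslash\GL_2(\A_F)^1$ (the domain you correctly use in the adjunction integral), but you then test it against $\varphi\in\calc_c^\infty(N(\A_F)M(\A_F)\backslash\GL_2(\A_F)^1)$ and invoke density in $\call^2$ of that smaller quotient. If the test functions really are invariant under the full adelic Levi, the vanishing of all pairings only forces the average of $\phi_B$ over $M(F)\backslash M(\A_F)^1$ to vanish, not $\phi_B$ itself, and the lemma would fail for that test space; the definition in the paper should be read (as in Garrett) with invariance under $M(F)$ only, or equivalently one decomposes along characters of $[M(\A_F)^1]$. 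With the corrected test space the finish is also cleaner than an $\call^2$-density argument: $\phi_B$ need not be globally square-integrable on $N(\A_F)M(F)\backslash\GL_2(\A_F)^1$, but it is locally integrable (Cauchy--Schwarz over the compact $[N]$), and a locally integrable function orthogonal to all compactly supported smooth test functions vanishes almost everywhere, giving $\phi\in\call_{\text{cusp}}$ and hence $\call_{\text{cusp}}^\perp\subseteq\overline{\call_{\text{Eis}}}$ as you intended.
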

\begin{proof}
	See \cite[2.7.2]{gar}.
\end{proof}

The orthogonal complement of $\call_{\text{cusp}}$ in $\call_{\text{disc}}$ is called the $\mathit{residual}$ spectrum, and denoted by $\call_{\text{res}}$. It is obtained by residues of Eisenstein series. In our situation we have that $\call_{\text{res}}$ decomposes as the direct sum of $\chi\circ\det$, for $\chi$ ranging among the characters of $\A_F^1/F$.

\begin{prop}\label{spectraldec}
	We have the orthogonal decomposition
	\begin{equation}
		\call=\call_{\text{cusp}}\oplus\call_{\text{res}}\oplus\call_{\text{cont}}.
	\end{equation}
\end{prop}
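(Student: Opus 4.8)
The plan is to establish the $L^2$-decomposition in two stages: first splitting off the cuspidal part, then decomposing its orthogonal complement. First I would invoke Lemma \ref{3.17}, which asserts that the $\call^2$-closure of the span of pseudo-Eisenstein series is exactly the orthogonal complement $\call_{\text{cusp}}^\perp$ inside $\call$. This already gives the orthogonal splitting $\call = \call_{\text{cusp}} \oplus \call_{\text{cusp}}^\perp$; what remains is to identify $\call_{\text{cusp}}^\perp$ with $\call_{\text{res}} \oplus \call_{\text{cont}}$.

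Next I would analyze the space spanned by pseudo-Eisenstein series $\Psi(\varphi, g)$ using the pointwise formula expressing them as integrals of genuine Eisenstein series $E(f,g)$ against the parameter $s$ over a vertical line, together with the Mellin/spectral inversion (as in \cite[Thm.2.11.1]{gar} and the surrounding discussion). Shifting the contour of integration toward the critical line $\Re(s) = 1/2$ picks up residues of the Eisenstein series $E(f,g)$ at the poles in the region $\Re(s) > 1/2$; for $\GL_2$ over a number field these poles are simple and located at $s = 1$ (for the trivial $\chi$, up to the normalization of $\nu^s$), with residue a constant multiple of $\chi \circ \det$. Collecting these residues over all characters $\chi$ of $\A_F^1/F$ yields precisely the residual spectrum $\call_{\text{res}} = \bigoplus_\chi \C \cdot (\chi\circ\det)$, while the remaining contour integral over $\Re(s) = 1/2$ produces, after the spectral-theoretic interpretation of the direct integral, the continuous spectrum $\call_{\text{cont}}$. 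That these two pieces are mutually orthogonal and together exhaust $\call_{\text{cusp}}^\perp$ follows from the orthogonality relations for Eisenstein series and the fact that the discrete part of $\call_{\text{cusp}}^\perp$ (which is $\call_{\text{res}}$ by definition, being the orthogonal complement of $\call_{\text{cusp}}$ in $\call_{\text{disc}}$) is spanned by the residues just computed.

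I would then assemble the three summands: $\call_{\text{cusp}}$ from Lemma \ref{3.17}, $\call_{\text{res}}$ as the residual part identified with $\bigoplus_\chi \chi\circ\det$, and $\call_{\text{cont}}$ as the direct-integral piece, noting that $\call_{\text{disc}} = \call_{\text{cusp}} \oplus \call_{\text{res}}$ by the definition of the residual spectrum and that $\call_{\text{cont}}$ is by construction the orthogonal complement of $\call_{\text{disc}}$ in $\call$. This gives the claimed orthogonal decomposition
\[
\call = \call_{\text{cusp}} \oplus \call_{\text{res}} \oplus \call_{\text{cont}}.
\]

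The main obstacle is the contour-shifting and residue computation: one must justify that the only poles of $E(f,g)$ in the half-plane $\Re(s) \ge 1/2$ are the simple ones at the edge coming from the pole of the relevant completed $L$-function (equivalently, of the intertwining operator / constant term), and that the residues are exactly the one-dimensional representations $\chi\circ\det$. This is the technical heart of Langlands' theory for $\GL_2$; I would cite \cite{arth} and \cite{lab} (and \cite{gar}) for the meromorphic continuation, functional equation, and location of poles of Eisenstein series rather than reproving them, so that the argument here reduces to bookkeeping of the three spectral pieces.
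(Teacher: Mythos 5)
Your proposal is correct and takes essentially the same route as the paper: the paper's proof is simply a citation of \cite[Thms 2.10.3, 2.10.4]{lab} and \cite[Prop.2.5.10]{lab}, whose content is exactly what you sketch --- pseudo-Eisenstein series spanning $\call_{\text{cusp}}^{\perp}$ (Lemma \ref{3.17}), contour shifting picking up the residues $\chi\circ\det$ that constitute $\call_{\text{res}}$, and the critical-line integral yielding $\call_{\text{cont}}$ --- so both arguments ultimately defer the meromorphic continuation and pole location of Eisenstein series to the same literature. Note, moreover, that with the paper's definitions ($\call_{\text{cont}}$ the orthogonal complement of $\call_{\text{disc}}$ in $\call$, and $\call_{\text{res}}$ the orthogonal complement of $\call_{\text{cusp}}$ in $\call_{\text{disc}}$) the bare statement already follows formally once one knows $\call_{\text{cusp}}\subseteq\call_{\text{disc}}$, so your residue computation identifying $\call_{\text{res}}$ with $\bigoplus_{\chi}\C\cdot(\chi\circ\det)$ supplies more than the proposition strictly requires.
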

\begin{proof}
	This statement follows from \cite[Thms 2.10.3, 2.10.4]{lab} and \cite[Prop. 2.5.10]{lab}.
\end{proof}

\subsection{Ternary Theta Series}

\subsubsection{Ternary Quadratic Forms}

Let $V$ denote a $3$-dimensional vector space over $F$. An $\calo_F$-valued quadratic form $Q$ over $\calo_F$ is $\mathit{primitive}$ if the ideal generated by its $\calo_F$-values is $\calo_F$. Moreover $Q$ $\mathit{represents}$ $\delta$ if $\delta\in Q(\calo_F)$. We also denote the orthogonal group of a quadratic space $(V,Q)$ by $O(V)$, which consists of the invertible linear transformations $L\in\End(V)$ such that $Q(Lv)=Q(v)$ for all $v\in V$. Moreover, rotations form the special orthogonal group $\SO(V)$, i.e., isometries of determinant $1$. We also recall that a non-zero vector $v$ is called $\mathit{anisotropic}$ if $Q(v)\neq 0$. We also say that $Q$ is $\mathit{positive}$ $\mathit{definite}$ if $Q$ represents only positive values. For such a form $Q$, essential to our study are the $\mathit{representation}$ numbers
\[
r(Q,\delta):=\#\{x\in\calo_F^3 : Q(x)=\delta \}.
\]
Note that  assuming the positive definiteness is essential, since it ensures the finiteness of the representation numbers.

We can now define the theta series of $Q$ and its Fourier expansion. For $\delta\in F$ and $z=(z_1\dots,z_d)\in\mathscr{H}^d$, we denote by $q^{\delta}:=\exp(2\pi i\sum_{i=1}^d\tau_i(\delta)z_i)$. 

Let $Q$ be a non-degenerate\footnote{I.e., $\det Q\neq 0$.} positive definite ternary quadratic form on $F^3$. Then
\begin{equation}\label{half}	\uptheta_Q(z)=\sum_{v\in \calo_{F}^3}q^{Q(v)}=\sum_{\delta\in F}r(Q,\delta)q^\delta
\end{equation}
is a (classical) Hilbert modular form of  weight $3/2$ for $\SL_2(\calo_F)$. See \cite[p.154]{gar} or \cite[Section 4]{shi} for a proof.


From now on, let $B^0$ denote the set of trace zero element in the quaternion algebra $B$. To $B$, one can associate the $4$-dimensional quadratic space $(V,Q)=(B,\nr)$.



%


As already hinted in Section \ref{weilrepr}, we make clear that the cases of interest for the present work are the following:
\begin{itemize}
	
	\item $(V,Q)=(B^0,Q_s)$;
	
	\item $(V,Q)=(F,U)$;
	
\end{itemize}

where $U$ is the unary form $U(x)=x^2$ and $Q_s$ is the ternary form as defined in Section \ref{ternary}.

\subsubsection{Ternary Quadratic Forms and Eichler Orders}\label{ternary}

The constructions of the Eichler orders attached to geometric points $s$ in the special fiber of the Shimura curve as in Section \ref{Grosslattice} provide the $\mathit{Gross}$ $\mathit{lattice}$ 
\[
\Lambda_s:=(2R_s+ \calo_F)\cap B'^{0}
\]
and the ternary quadratic forms
\[
Q_s\colon (2R_s+ \calo_F)\cap B'^{0}
\longrightarrow F,
\]
defined by $Q_s(b)=\nr(b)$.
\\Notice that, since $B'$ is ramified at $\tau_1$, then $Q_s$ is positive definite.

Let us now denote by $\Lambda_s$ the domain of $Q_s$. Extending the scalars by $\A_{F,f}$, we obtain the adelization of $Q_s$ 
\[
\widehat{Q}_s\colon \Lambda_s\otimes \A_{F,f}\longrightarrow \A_{F,f}.
\]

For $\delta\in\A_{F,f}^\times$, we consider the following  representation numbers
\[
r(\widehat{Q}_s,\delta):=\sum_{\substack{x\in (R_s\otimes\widehat{\calo}_F)^\times\backslash B^{'0}\otimes\A_{F,f} \\ \widehat{Q}_s(x)=\delta}}\mathbf{1}_{{R}_s\otimes \widehat{\calo}_F}.
\]

Let $(V,Q)$ denote a quadratic space over $F$ with a symmetric, bilinear form $\beta$, and let $(e_i)_{i=1}^d$ be a $F$-basis for $V$, and consider the $d\times d$-matrix $M=(\beta(e_i,e_j))_{i,j=1}^d$. Then the discriminant of the quadratic form $Q$ over $F$ is defined as the coset $D_Q=
\det M\cdot F^{\times,2}$ in $F^\times/ F^{\times,2}$.

\begin{lem}\label{disc}
  Suppose that $2$ is inert in $F$. Let $\gotn$ be the level of $R_s$. The discriminant $D_{Q_s}$ of the quadratic lattice $\Lambda_s$ is equal to the  $4\gotn^2 v^2$. 
\end{lem}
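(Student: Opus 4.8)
The statement is local in nature: the discriminant of $Q_s$ can be computed as the product over all places of $F$ of the local discriminants $D_{Q_s,v}$, so the plan is to compute $D_{Q_s\otimes F_v}$ at each place. The key input is the description of $R_s$ from Section \ref{Grosslattice}: away from $\gotn v$ it is a maximal order in $B'$, at primes dividing $\gotn$ it is an Eichler order of the corresponding level, and at $v$ itself the change of invariants forces $B'_v$ to be split with $R_{s,v}$ a specific order of reduced discriminant $v^2$ (coming from $R_{\iota,v}=\End_{\iota(\calo_{B_v})}(\X)\subset M_2(\calo_{F_v})$). The Gross lattice construction $\Lambda_s=(2R_s+\calo_F)\cap B'^0$ then has a uniform local shape, and the point is to track the factor of $2$ introduced by the ``$2R_s$'' and the contributions of the Eichler level and of the ramification at $v$.

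\textbf{Key steps.} First I would recall the standard fact (cf. Gross \cite{gr}, and the analogous computation in \cite{jk}) that for a maximal order $\calm$ in a quaternion algebra the trace-zero lattice $2\calm+\calo_F$ intersected with the trace-zero part has discriminant equal (up to squares) to $\disc(\calm)$ times a power of $2$ coming from the normalization; concretely, for $R_s$ \emph{maximal} the Gross form $Q_s$ has discriminant $4\,\mathrm{disc}(R_s)$ where $\mathrm{disc}(R_s)$ is the (reduced) discriminant ideal of $R_s$, squared. Second, I would compute $\mathrm{disc}(R_s)$ place by place: it equals $\gotn$ at the Eichler places (by definition of the level $\gotn$ of $R_s$) and $v^2$ at the place $v$ (since $B'$ acquires a ramified local invariant switched to $\tau$, and the order $R_{\iota,v}$ obtained as the centralizer of $\iota(\calo_{B_v})$ inside $M_2(\calo_{F_v})$ has reduced discriminant $v^2$ — this is precisely the $M_2(\calo_{F_v})$ of rank $4$ appearing in Section \ref{Grosslattice}, which is an order of index $v^2$ in a maximal order of $M_2(F_v)$). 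Third, I would feed the hypothesis that $2$ is inert in $F$: this guarantees there is a single dyadic place, so the power of $2$ contributed by the Gross normalization is exactly $4$ (as a single ideal $(2)$ squared would be $4$, but here the factor $4=2^2$ arises from the $2$ in $2R_s+\calo_F$ on a rank-$3$ lattice in the standard way), and it rules out the subtle dyadic complications that otherwise appear when $2$ splits or ramifies. Combining, $D_{Q_s}=4\,\gotn^2\,v^2$ in $F^\times/F^{\times,2}$, interpreted as the corresponding ideal class.

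\textbf{Main obstacle.} The delicate point is the dyadic place, which is exactly why the hypothesis ``$2$ inert in $F$'' is imposed: computing the local discriminant of the Gross ternary form at primes above $2$ requires care because the bilinear form $\beta(x,y)=\mathrm{tr}(x\bar y)$ (or $\nr(x+y)-\nr(x)-\nr(y)$) behaves differently, and the lattice $2R_s+\calo_F$ is designed so that its Gram determinant picks up a clean power of $2$; I would verify that with $2$ inert the local ternary form at the dyadic place is $\mathrm{GL}_2(\calo_{F_v})$-equivalent to the split form scaled so that its discriminant is $(4)$, matching the global answer, and that no extra unit ambiguity in $F^\times/F^{\times,2}$ intervenes. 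The remaining steps — the Eichler-level and the switched-invariant computations at $v$ — are routine local quaternion algebra bookkeeping, so the dyadic normalization is where essentially all the content lies.
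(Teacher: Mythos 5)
Your plan (compute the discriminant place by place from the local shape of $R_s$) is the same in spirit as the paper's, but the execution has genuine gaps. First, the numbers you quote do not combine to the stated answer. You take the local reduced discriminant of $R_s$ to be $\gotn$ at the Eichler places and $v^2$ at $v$, and you relate $D_{Q_s}$ to $4\cdot\disc(R_s)$ ``squared''; on either reading of that relation the product is $4\gotn^2v^4$ or $4\gotn v^2$, not $4\gotn^2v^2$. The correct local input at $v$ is reduced discriminant $v$, not $v^2$: in the superspecial case $R_{s,v}=R_{\iota,v}$ sits inside $M_2(\calo_{F_v})$ and contributes like a level-$v$ (not level-$v^2$) structure, while in the supersingular case --- which your structural description omits entirely, since you assert that ``the change of invariants forces $B'_v$ to be split'' --- $B'_v$ is the division algebra and $R_{s,v}$ is its maximal order, again of reduced discriminant $v$. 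The paper's proof actually treats exactly this last configuration explicitly, so a proof that only discusses a split $B'_v$ at $v$ does not cover the statement.

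Second, the identity ``$D_{Q_s}=4\,(\text{reduced discriminant of }R_s)^2$'' is essentially the content of the lemma, and you cite it rather than prove it. The paper establishes it by writing explicit local bases of $\Lambda_{s,v}$ and computing Gram determinants: at a split place with Eichler level $\varpi_v^n$ the lattice is $\left\{\begin{bmatrix} a & 2b\\ 2\varpi_v^n c & -a\end{bmatrix}\right\}$, whose Gram determinant contributes $\varpi_v^{2n}$ (and $4\varpi_v^{2n}$ at the dyadic place); at the place where $B'$ is ramified and $R_{s,v}$ is the maximal order, the basis $(2i,2j,2k)$ gives the diagonal form $4\varpi_v a^2+4b^2+4\varpi_v c^2$, contributing $\varpi_v^2$; and at a ramified dyadic place the congruence lattice $\{ai+bj+ck : a\equiv b\equiv c \bmod 2\}$ has Gram determinant $16$. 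Your treatment of the dyadic place is too loose to substitute for this: the heuristic that the factor $4$ ``arises from the $2$ in $2R_s+\calo_F$ on a rank-$3$ lattice in the standard way'' is not correct as stated (rescaling a rank-$3$ lattice by $2$ multiplies the Gram determinant by $2^6$); the clean factor $4$ only emerges from the explicit dyadic Gram computation, which is precisely what the hypothesis that $2$ is inert in $F$ reduces to a single, manageable case.
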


\begin{proof}
	Let us begin with the case of a place $v$ of $F$ such that $v\neq \ell$. 
	Let $\varpi_{v}$ be the uniformizer of $\calo_{F_v}$. Denote by $n$ the $v$-adic valuation of $\gotn$,
	so that $R_{s,v}:=R_s\otimes \calo_{F_v}$ is a Eichler order of level $\varpi_v^n$. We thus have
	\[
	R_{s,v}=\begin{bmatrix}
		\calo_{F_v} & \calo_{F_v}
		\\
		\varpi_{v}^n\calo_{F_v} & \calo_{F_v}
	\end{bmatrix}
	\]
	and consequently
	\[
	\Lambda_{s,v}:=\Lambda_s\otimes \calo_{F_v}=\left\lbrace \begin{bmatrix}
		a & 2b
		\\
		2\varpi_{v}^nc & -a
	\end{bmatrix}: a,b,c\in\calo_{F_v} \right\rbrace .
	\]
	
	Hence, by computing the determinant, the local quadratic form $Q_{s,v}$ is
	\[
	(a,b,c)\mapsto -a^2-4\varpi_v^{2n}bc.
	\]
	This shows that the contributions to the determinant of $Q_s$ are
	\[
	\begin{cases}
		& \varpi_v^{2n},\;\;\;\text{if}\; v \ndiv 2\calo_F
		\\
		& 4\varpi_v^{2n},\;\;\text{otherwise.}
	\end{cases}
	\]
	
	Consider now the case $v=\ell$. If $v \neq 2\calo_F$, then $R_{s,v}$ is the unique maximal order of the unique quaternion algebra, with $\calo_{F_v}$-basis $(1,i,j,k)$ such that $i^2=-\varpi_v$, $j^2=-1$, $k^2=-1$ and $k=ij=-ji$. Thus $\Lambda_{s,v}$ has a basis $(2i,2j,2k)$ amd we obtain the diagonal form 
	\[
	Q_{s,v}(a,b,c)=4\varpi_v a^2+4b^2+4\varpi_v c^2
	\]
	whose determinant is $64\varpi_v^2$, so contributing by $\varpi_v^2$ to the discriminant of $Q_s$.
	\\On the other hand, if $v= 2\calo_F$, then by \cite[pp.145,177]{gr}, the local Eichler order is isomorphic to the unique maximal order and for $i^2=j^2=k^2=-1$ and $k=ij=-ji$ we have
	\[
	\Lambda_{s,v}=\{ ai+bj+ck: a\equiv b\equiv c \bmod v \}.
	\]
	Therefore we have
	\[
	Q_{s,v}(a,b,c)=-(3a^2+4ab+4ac+4b^2+4c^2),
	\]
	
	whose corresponding matrix has determinant $16$, i.e., it contributes $4\ell^2$.
\end{proof}

We introduce, following \cite{kohnen},  the Kohnen plus space $M^{+}_{\frac{3}{2}}(U)$ as the subspace of $M_k(U)$ whose forms $f$ have Whittaker--Fourier coefficients $a(f,\delta)=0$, for $\delta\in\A_{F}^\times$, unless $\delta_v$ is congruent to a square modulo $4\calo_{F_v}$ for almost all $v$, i.e., there exists $y_v\in\calo_{F_v}$ such that $\delta_v\equiv y_v^2\bmod 4\calo_{F_v}$ for almost all $v$.

\begin{lem}\label{theta}
	The Gross theta series
	
	\begin{equation}
		\uptheta_{Q_s}=\sum_{\beta\in \Lambda_s} q^{Q_s(\beta)}
	\end{equation}
	
	are Hilbert modular forms which lie in the Kohnen's plus space $M^+_{\frac{3}{2}}(\Gamma_0(4\gotn \ell))$.
\end{lem}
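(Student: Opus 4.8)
The plan is to establish modularity first and then pin down the precise level and the plus-space condition. First I would invoke the general fact, quoted earlier in the excerpt as equation~(\ref{half}) (see \cite[p.154]{gar} or \cite[Section~4]{shi}), that for a non-degenerate positive definite ternary quadratic form over $\calo_F$ the associated theta series $\uptheta_{Q_s}(z)=\sum_{\beta\in\Lambda_s}q^{Q_s(\beta)}$ is a classical Hilbert modular form of weight $3/2$ on a congruence subgroup determined by the level of the quadratic form. Since $B'$ is ramified at $\tau_1$ (indeed at all archimedean places, being definite), $Q_s$ is positive definite, so this applies; the non-degeneracy follows from $\det Q_s\neq 0$, which is visible from Lemma~\ref{disc}. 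This already gives $\uptheta_{Q_s}\in M_{3/2}(U)$ for a suitable $U$, and it remains to identify $U$ and to verify the Kohnen plus-space condition.

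Next I would compute the level. The transformation behaviour of a ternary theta series under $\Gamma_0(N)$ is governed by the level $N$ of the lattice $\Lambda_s$, i.e.\ by the product of the discriminant data computed place-by-place in the proof of Lemma~\ref{disc}. There the local contributions were found to be $\varpi_v^{2n}$ (or $4\varpi_v^{2n}$ at the even place) at a place where $\gotn$ has valuation $n$, the factor $\varpi_\ell^2$ at $\ell$, and the factor $4$ from the prime above $2$; the ramified prime $v$ at which the superspecial reduction takes place contributes a further $v^2$, giving total discriminant $4\gotn^2 v^2$ as stated. Translating this into the level of the half-integral weight form: the level of a ternary theta series is $4$ times (a suitable divisor of) the discriminant of the form, but the square factors $\gotn^2$ and $v^2$ in the discriminant only contribute $\gotn$ and $v$ respectively to the conductor of the associated automorphic form, so that $\uptheta_{Q_s}$ transforms under $\Gamma_0(4\gotn v)$ (the excerpt writes $\ell$ here, which should be read as the ramified prime $v$ where the Gross lattice is constructed). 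I would spell this out by diagonalising or putting in Jordan form the local quadratic forms $Q_{s,v}$ exactly as in Lemma~\ref{disc} and reading off the level of the corresponding local Weil representation, exploiting that $2$ is inert in $F$ to control the dyadic place.

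Finally I would verify membership in the Kohnen plus space $M^+_{3/2}(\Gamma_0(4\gotn v))$. By definition this requires the Whittaker--Fourier coefficients $a(\uptheta_{Q_s},\delta)$ to vanish unless $\delta_v$ is congruent to a square modulo $4\calo_{F_v}$ for almost all $v$. But $a(\uptheta_{Q_s},\delta)$ is (up to normalisation) the representation number $r(Q_s,\delta)$, and from the explicit local shapes in Lemma~\ref{disc} one sees that every value taken by $Q_s$ is, locally at each place dividing $2v\gotn$, of the form $-a^2-4(\cdots)$ or $4(\cdots)a^2+\cdots$, hence $\equiv \square \bmod 4$; more conceptually, the Gross lattice $\Lambda_s=(2R_s+\calo_F)\cap B'^{0}$ is built so that its values lie in the correct square class modulo $4$, which is precisely the classical mechanism (Gross, \cite[pp.171--172]{gr}) behind the plus-space containment. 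I would therefore conclude by checking the congruence condition on $Q_s(\beta)$ directly from the local descriptions of $\Lambda_{s,v}$ already tabulated in the proof of Lemma~\ref{disc}.

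The main obstacle is the bookkeeping in the second step: correctly descending from the discriminant $4\gotn^2v^2$ of the quadratic lattice to the level $4\gotn v$ of the half-integral weight form, which requires knowing exactly how square factors in the discriminant of a ternary form reduce the conductor of its theta series, and handling the even place carefully under the hypothesis that $2$ is inert. The weight and the plus-space condition, by contrast, are essentially immediate from equation~(\ref{half}) and from the explicit local computations already carried out in Lemma~\ref{disc}.
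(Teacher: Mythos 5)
Your proposal follows essentially the same route as the paper: weight-$3/2$ modularity is quoted from (\ref{half}) (the paper additionally invokes the $q$-expansion map (\ref{expansionprinciple})), the plus-space condition is deduced from the congruence satisfied by the values of $Q_s$ on the Gross lattice $\Lambda_s=(2R_s+\calo_F)\cap B'^{0}$ (the paper phrases the same fact through the criterion for $\calo_{F_v}+\tfrac{\delta_v+\sqrt{-\delta_v}}{2}\calo_{F_v}$ to be an order, i.e.\ $\delta_v\equiv 0,1\bmod 4\calo_{F_v}$), and the level is read off from the local matrices computed in Lemma \ref{disc}. The one step where your primary justification diverges from the paper is the level: the paper does not descend from the discriminant $4\gotn^2v^2$ at all, because the level of a ternary lattice is not determined by its discriminant (two lattices with equal discriminant can have different levels); it instead applies the criterion of \cite[Lemma 4.4]{jk}, namely $\uptheta_{Q_s}\in M^{+}_{3/2}(\Gamma_0(4\gotm))$ for $\gotm$ the smallest ideal making $4\gotm$ times the \emph{inverse} Gram matrices even integral, and checks this even-integrality directly on the inverses of the matrices from Lemma \ref{disc}. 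So your heuristic that ``square factors in the discriminant contribute their square roots to the conductor'' should not be leaned on as a principle, but your stated fallback---diagonalising the local forms $Q_{s,v}$ and reading off the local level---amounts to exactly this inverse-matrix computation, so the plan does go through. Your remark about $\ell$ versus $v$ is fair: the paper itself writes the discriminant with $v$ and the level with $\ell$, the intended prime being the one at which the local order is the maximal order of the ramified quaternion algebra.
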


\begin{proof}
	By the $q$-expansion map (\ref{expansionprinciple})  we have that the theta series $\uptheta_{Q_s}$
	are Hilbert modular forms and by (\ref{half}) it follows than its weight is $3/2$.
	
	For $\delta_v\in \calo_{F_v}$, we notice that
	$\calo_{F_v}+\dfrac{\delta_v+\sqrt{-\delta_v}}{2}\calo_{F_v}$
	is an $\calo_{K_v}$-order if and only if $\delta_v\equiv 0,1\bmod 4\calo_{F_v}$.

	Let us show the condition under which it is an $\calo_{K_v}$-order. We require that $(\delta_v+\sqrt{-\delta_v}/2)^2\in\calo_{F_v}$, which is equivalent to $(\delta_v^2-\delta_v)/4\in\calo_{F_v}$.
	Hence the sufficient and necessary condition to be an order is that $\delta_v\equiv 0,1\bmod 4\calo_{F_v}$. 
	Since $Q_{s,v}(\beta_v)=-\delta_v\equiv 0,1 \bmod 4\calo_{F_v}$ we have that the coefficients of
	$\uptheta_{Q_{s,v}}$, i.e., the $r(Q_{s,v},\delta_v)$'s
	are non-zero only if $-\delta_v\equiv 0,1 \bmod 4\calo_{F_v}$. By Hasse--Minkowski theorem we obtain the global result.
	\\Concerning the level, we proceed as in \cite[Lemma 4.4]{jk}. Given that that $\uptheta_{Q_s}\in M^+_{3/2}(\Gamma_0(4\gotm))$, for $\gotm$ the smallest ideal such that $4\gotm $ has even integral coefficients, it is enough to see that the inverse of the matrices appearing in the proof of Lemma (\ref{disc}) have even integral coefficients after multiplying by $4 v^n$, where $n$ is the $v$-adic valuation of $\gotn \ell$.
\end{proof}

\subsubsection{Automorphic Theta Series}

In order to obtain automorphic theta forms from Weil representations, we need to introduce the analogue of  self-dual\footnote{With respect to the Fourier transform.} functions on the lattice $V(F)\simeq F^n$, for a quadratic space $(V,Q)$ with $Q$ a $n$-ary form.

For $v$ a non-archimedean place of $F$, consider the functions
\[
\mathbf{1}_{v}:=\prod_{i=1}^{n}\mathbf{1}_{\calo_{F_v}}.
\]
We have  that $\widehat{\mathbf{1}}_v=\mathbf{1}_v$, i.e., the Fourier transform of $\mathbf{1}_v$ is itself.
For $v=\infty$ an archimedean place, we define $\mathbf{1}_\infty$ to be the Gaussian exponential $\exp(-\pi Q(x))$. Note that $\mathbf{1}_v\in \call^2(F_v^n)$. Any adelic Schwartz--Bruhat function $\phi\in\mathtt{S}(\A_F^n)$ is a (finite) linear combination of the  product $\prod_v\phi_v$  where $\phi_v\in\mathtt{S}(F^n_v)$ and $\phi_v=\mathbf{1}_v$ for almost all $v$ (see \cite[Rmk.4.3.1]{hanke2}).

In order to associate to each of these quadratic forms a function on $\widetilde{\GL}_2(\A_F)$ we consider the following linear functional on $\mathtt{S}(\A^n_F)$
\[
\Theta\colon\phi\mapsto \sum_{x\in F^n}\phi(x)
\]
called $\mathit{theta}$-distribution. Note that the series defining the $\Theta(\phi)$ converges and $\Theta$ spans the space of $\SL_2(F)$-invariant forms on $\mathtt{S}(\A^n_F)$. 
After inducing up to $\widetilde{\GL}_2(\A_F)$, the action of the Weil representation $r_Q$ attached to the form $Q$ on the theta-distribution defines
\[
\vartheta_Q(g,h,\phi)=\sum_{x}r_Q(g)\phi(h^{-1}x)
\]
for $g\in\widetilde{\GL}_2(\A_F)$ and $h\in O(V)(\A_F)$. This function is called $\mathit{theta}$-$\mathit{kernel}$ and it defines an automorphic form on $\widetilde{\GL}_2(\A_F)$.
\\Since $(V,Q)$ is anisotropic over $F$, it follows that $[O(V)]$ is compact. We thus consider
\begin{equation}\label{thetalift}
	\int_{[O(V)]}\vartheta_Q(g,h,\phi)dh,
\end{equation}
for $dh$ the normalized measure such that $[O(V)]$ is of $dh$-volume $1$. After inducing up to $\widetilde{\GL}_2(\A_F)$ again, (\ref{thetalift}) defines an automorphic form $ I(g,\phi)$ on $[\widetilde{\GL}_2]$.  For more details, see \cite[Section 2.6]{gel} and \cite[Section 4.4]{hanke2}.

\subsubsection{Ternary Quadratic Forms and Optimal Embeddings}

In this section we introduce one of the key ingredient of the equidistribution we are going to prove.

\begin{lem}\label{3.5}
	Let $\delta_{c}$ be the idele corresponding to the ideal $Dc^2$ by (\ref{idele-ideal}). Then we have the following $((\# R_s^\times/\# \calo^\times_{D,c}):1)$-correspondence 
	
	\[
	\Set{\begin{array}{c}
			\text{primitive representations}\\
			\text{$\widehat{Q}_s(b)=\delta_{c}$}  \\
	\end{array}}
	\longleftrightarrow
	\Set{
		\begin{array}{c}
			\text{optimal embeddings}
			\\
			\text{ $f\colon\calo_{D,c}\hookrightarrow R_s$ }
	\end{array}}.
	\]
	
\end{lem}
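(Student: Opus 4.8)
The plan is to exhibit an explicit bijection between trace-zero elements $b \in \Lambda_s \otimes \widehat{\calo}_F$ with $\widehat{Q}_s(b) = \delta_c$ (primitively) and $\calo_F$-algebra embeddings $f \colon \calo_{D,c} \hookrightarrow R_s$, and then account for the multiplicity $\# R_s^\times / \# \calo_{D,c}^\times$ coming from the action of units when one passes to conjugacy classes of optimal embeddings on one side and to representations up to nothing (or up to the stabilizer) on the other.

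First I would set up the correspondence integrally, one place at a time, since both $\widehat{Q}_s$ and the embedding condition are defined adelically and everything is a product over the finite places $v$ of $F$. Fix a generator $\omega$ of $\calo_{D,c}$ over $\calo_F$, say $\omega = (Dc^2 + \sqrt{-Dc^2})/2$ (adjusting for parity as in the proof of Lemma \ref{theta}), so that $\omega$ satisfies $\omega^2 - t\omega + n = 0$ with $t = \Tr(\omega)$ and $n = \nr(\omega)$, and $D_{\calo_{D,c}} = t^2 - 4n$ generates the ideal $Dc^2$. Given an embedding $f \colon \calo_{D,c} \hookrightarrow R_s$, the element $b := 2f(\omega) - t \in B'^0$ is trace zero and lies in $2R_s + \calo_F$, hence in $\Lambda_s$; and $\nr(b) = \nr(2f(\omega) - t) = 4n - t^2 = -D_{\calo_{D,c}}$, which up to the square class and sign matches $\delta_c$ (here one must be careful about the sign convention, but $B'$ is definite so $\nr$ is positive-definite and the bookkeeping works out as in Lemma \ref{disc}). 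Conversely, given $b \in \Lambda_s$ trace zero with $\widehat{Q}_s(b) = \delta_c$, the element $\omega_b := (b + t)/2$ — which lies in $R_s$ precisely because $b \in 2R_s + \calo_F$ and $t \equiv D_{\calo_{D,c}} \equiv \pm$ the right residue — generates a copy of $\calo_F[\omega] = \calo_{D,c}$ inside $R_s$, defining $f$. The key point making $f$ \emph{optimal}, i.e. $f(K) \cap R_s = f(\calo_{D,c})$, is exactly the \emph{primitivity} of the representation: if $b$ were divisible by some prime, the order generated would be strictly larger, i.e. the embedding would factor through an order of smaller conductor. This is the heart of the lemma and mirrors the classical Gross correspondence \cite[p.172]{gr} and \cite[Lemma 4.x]{jk}.

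Next I would handle the multiplicity. On the right-hand side, ``optimal embeddings $f \colon \calo_{D,c} \hookrightarrow R_s$'' should be understood up to conjugation by $R_s^\times$ (this is the convention under which $h(\calo_{D,c}, R_s)$ counts them, as recalled in the proof of Lemma \ref{lem2.7}), whereas the representations $\widehat{Q}_s(b) = \delta_c$ on the left are counted in $(R_s \otimes \widehat{\calo}_F)^\times \backslash (B'^0 \otimes \A_{F,f})$ as in the definition of $r(\widehat{Q}_s, \delta)$. Since $f \mapsto b_f = 2f(\omega) - t$ intertwines $R_s^\times$-conjugation on embeddings with $R_s^\times$-conjugation on $\Lambda_s$, the map descends to a map of sets of orbits; its fibers are governed by the stabilizers. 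The stabilizer of $b$ under $R_s^\times$-conjugation inside the centralizer is $f(\calo_{D,c})^\times = \calo_{D,c}^\times$ (a unit commuting with $b$ commutes with $\omega_b$, hence lies in $f(K) \cap R_s^\times = f(\calo_{D,c}^\times)$ by optimality), and a standard orbit-counting argument then produces exactly the factor $\# R_s^\times / \# \calo_{D,c}^\times$ — or rather, reading the statement's normalization, a single $R_s^\times$-orbit of embeddings corresponds to $\# R_s^\times / \# \calo_{D,c}^\times$ many representing vectors $b$. (Globally $\calo_{D,c}^\times = \{\pm 1\}$ unless $D,c$ are very small, and $\# R_s^\times$ is the finite local-global unit count, so the fraction is a positive integer; I would remark on the subtlety that for quaternion orders these unit groups must be read as global units of $R_s$ versus $\calo_{D,c}$, compatibly with the adelic counting.)

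The main obstacle I expect is not the construction of the bijection — that is formal given the dictionary $b \leftrightarrow (b+t)/2$ — but rather the precise matching of normalizations: pinning down whether the left-hand count is up to $(R_s \otimes \widehat\calo_F)^\times$-action or a finer/coarser one, whether ``optimal embeddings'' on the right are honest maps or conjugacy classes, and getting the parity/congruence conditions on $t$ and $\delta_c \bmod 4\calo_{F_v}$ to line up place-by-place (the Kohnen plus-space condition from Lemma \ref{theta} is exactly what guarantees $b \in 2R_s + \calo_F$ is possible). Once those conventions are fixed the factor $(\# R_s^\times / \# \calo_{D,c}^\times : 1)$ falls out of a one-line stabilizer computation, and the argument reduces to citing Eichler's classical theory of optimal embeddings (e.g. \cite[Ch.~III]{gr} or Voight's book) adapted to the Gross lattice $\Lambda_s = (2R_s + \calo_F) \cap B'^0$.
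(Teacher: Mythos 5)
Your proposal follows essentially the same route as the paper's proof: the local-at-$v$ dictionary $f\mapsto \beta = f(\sqrt{-\delta_c})$ with inverse $\beta\mapsto (\delta_c+\beta)/2$ landing in $\Lambda_s$ via the congruence $\beta\equiv-\delta_c \bmod 2R_s$, and the identification of primitivity with optimality by the same contrapositive argument, all modeled on \cite[Prop.12.9]{gr} and \cite[Lemma 4.1]{jk}, which is exactly what the paper does. Your explicit orbit--stabilizer accounting for the factor $\#R_s^\times/\#\calo_{D,c}^\times$ (with the caveat that the unit groups must be read modulo $\calo_F^\times$ for $F\neq\Q$) is in fact slightly more careful than the paper, which leaves that normalization implicit.
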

\begin{proof}
	First we show how to obtain a representation from an embedding and viceversa, inspired by \cite[Prop.12.9]{gr}.
	Indeed, we proceed locally at the finite places $v$ of $F$. Let $\delta_{c,v}=D_vc_v$ be the product of the generators of the ideals $D$ and $c$ at $v$. By the local-global principle for embeddings (see \cite[Prop.14.6.7]{voi}), we can write
	\[
	\calo_{D,c,v}=\calo_{F_v}+\dfrac{\delta_{c,v}+\sqrt{-\delta_{c,v}}}{2}.
	\]
	Consider an embedding $f_v\colon\calo_{D,c,v}\hookrightarrow R_{s,v}$ and denote by $\beta_v:=f_v(\sqrt{-\delta_{c,v}})$. Since 
	\[
	2f_v\left( \frac{1}{2}(\delta_{c,v}+\sqrt{-\delta_{c,v}})\right) =\delta_{c,v}+\beta_v,
	\]
	then $\beta_v\equiv -\delta_{c,v} \bmod 2R_{s,v}$, so that $\beta_v\in \Lambda_{s,v}$. Since $\nr_v(\beta_v)=\delta_v$, we obtained the desired representation.
	
	On the other hand, let $\beta_v\in \Lambda_{s,v}$ of reduced norm $\delta_{c,v}$. We thus write
	$\beta_v=\gamma_v+2r_v$, for $\gamma_v\in\calo_{F_v}$ and $r_v\in R_{s,v}$, so that
	\[
	\delta_{c,v}=\nr_{v}(b_v)=-(\gamma_v+2r_v)^2\equiv -\gamma_v\bmod 4R_{s,v}.
	\]	
	Thus we have $\beta_v\equiv -\delta_{c,v}\bmod 2R_{s,v}$, and we obtain an embedding $f_v\colon\calo_{D,c,v}\hookrightarrow R_{s,v}$ by setting
	\[
	f_v\left(\dfrac{\delta_{c,v}+\sqrt{-\delta_{c,v}}}{2} \right) =\dfrac{\delta_{c,v}+\beta_v}{2}.
	\]
	Finally, we recall that being  optimal is a local property, i.e., an embedding $f\colon \calo_{D,c}\hookrightarrow R_{s}$ is optimal if and only if $f_v\colon\calo_{D,c,v}\hookrightarrow R_{s,v}$ is optimal at all the places $v$ (see, for instance, \cite[Lemma 30.3.6]{voi}). Therefore, we proceed by localizing as above.
	
	Since we closely follow \cite[Lemma 4.1]{jk}, we only sketch the strategy to show the correspondence between optimal embeddings and primitive representations without the necessary algebraic manipulations. By contrapositive, if $Q_s(\beta_v)=\delta_{c,v}$ is not primitive, then we can write $\beta_v=\alpha_v k_v$ for $\alpha_v\in \Lambda_{s,v}$ and $k_v\in\calo_{F_v}$. By showing that $\frac{\delta_{c,v}+\alpha k_v^2}{2k_v^2}$ belongs to $f_v(K_v)\cap R_{s,v}$ but not to $f_v(\calo_{D,c,v})$, one concludes that $f_v$ is not optimal. 
	Lastly, if we suppose that $f_v$ is not an optimal embedding, then $f_v(\calo_{D,c,v}\otimes F_v)\cap R_{s,v}=\calo_{D,c',v}$ where $c=c'k$ for $k\in\calo_{F_v}$.	
\end{proof}

\subsubsection{Genus and Spinor Genus}

We revise here a few classical concepts following \cite{hanke2}. 

Let $(V,Q)$ denote a quadratic space over $F$ with a symmetric, bilinear form $\beta$. We recall that another form $Q'$   is $\mathit{equivalent}$ to $Q$ if there exists an invertible linear change of variables $f$  such that $Q'(f(x))=Q(x)$. The set of $\calo_F$-equivalence classes that are $\calo_{F_v}$-equivalent locally for all $v$'s is the $\mathit{class}$ number of $Q$. The $\mathit{genus}$ of $Q$ is the set of all forms with the same localization as $Q$. The $\mathit{class}$ number of $Q$ is equal to the cardinality of $\gen(Q)$, which is finite by a classical result of Siegel. In terms of lattices, let $L$ be a integral lattice in $V$. Then we denote by $gen(L)$ the genus of $L$, defined to be the set of integral lattices $M$ such that 
$L_v\simeq M_v$
for every place $v$ of $F$ (including the archimedean ones). More geometrically, the genus of $L$ is the orbit of $L$ under the action of the 
 orthogonal group (see \cite[Lemma 4.5.2]{hanke2}), which we denote by $O(V)$. Note that lattices in the same genus are isomorphic as $\calo_F$-modules. 
\\

In the case of ternary quadratic forms, the genus is moreover subdivived into $\mathit{spinor}$ genera, which essentially are the subgenera given by equivalence under the spinor group. More precisely, the $\mathit{spinor}$ norm is the group homomorphism
\begin{align*}
	&O(V)\rightarrow F^\times/ F^{\times,2}
	\\
	&\left( x\mapsto x-2u\beta(x,u)/Q(u) \right) \mapsto Q(u)
\end{align*}
i.e., it sends a reflection orthogonal to $v$ into $Q(v)$. We denote by $\Spin(V)$ the isometries of spinor norm $1$. This gives
 a two-fold cover of the special orthogonal group.
\\Two lattices $L$ and $M$ are in the same $\mathit{spinor}$ $\mathit{genus}$ if there is a rotation $\sigma\in \SO(V)$ and for every place $v$ there is $r_v\in \Spin(V_v)$ such that
$L_v\simeq \sigma_v r_v M_v.$
\\

Lastly, we recall that the $\mathit{automorphs}$ of $Q$ are the isometries from $Q$ to itself.  Let $w_Q$ be the number of the automorphs of $Q$, i.e., the cardinality of $\Stab_{\GL_3(\calo_{F})}(Q)$. 	Indeed the automorphs of $Q$ are finite: since  any of them is determined by its action on a  $\calo_F^3$-basis, and for $m$ big enough we have that the finitely many vectors in the compact defined by $Q(u)\le m$ generate $\calo_F^3$.

\subsubsection{Theta Series associated to Genus and Spinor Genus}\label{thetaseriesgenus}
We now associate two theta series to the genus $\gen(Q_s)$ and spinor genus $\spn(Q_s)$.

Let us define the genus and spinor genus $\mathit{mass}$\footnote{From the German word ``ma\ss'', which means ``weight, measure''.}
\begin{equation}\label{genmass}
	r^{?}(\diamondsuit, \delta)=\dfrac{\sum_{Q\in\diamondsuit}r^{?}(Q,\delta)/w_Q}{\sum_{Q\in\diamondsuit}1/w_Q}
\end{equation}

and
\begin{equation}
	\uptheta_\diamondsuit=\sum_{\delta\in\A_{F}}r(\diamondsuit,\delta)q^\delta
\end{equation}

for $\diamondsuit\in\{\gen(Q_s),\spn(Q_s) \}$ and $?\in\{\emptyset, *\}$, where $*$ denotes the restriction to primitive representations\footnote{And $\emptyset$ symbolizes the absence of any index.}.

\begin{prop}\label{3.10}
Let $s$ be in the supersingular or superspecial locus of the special fiber of $\cals_U$ at $v$, for $v$  unramified or ramified in $K$ respectively. Then we have:
	\begin{enumerate}
		\item the theta series $\uptheta_\diamondsuit$'s are in the same space as $\uptheta_{Q_s}$;
		\item the modular form
		\[
		\uptheta_{Q_s}-\uptheta_{\spn(Q_s)}
		\]
		of weight $3/2$  lies in the orthogonal\footnote{Where orthogonality is considered with respect to the Petersson inner product.} complement of the space of $1$-dimensional theta series;
		\item the modular form 
		\[
		\uptheta_{\gen(Q_s)}-\uptheta_{\spn(Q_s)}
		\]
		of weight $3/2$ lies in the space spanned by $1$-dimensional theta series.
	\end{enumerate}
\end{prop}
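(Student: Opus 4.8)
The plan is to translate the statement into the automorphic language developed in Section \ref{automorphicShimura}, where the three assertions become transparent consequences of the Shimura correspondence together with the structure of the space of weight $3/2$ forms. First I would recall that, by Lemma \ref{theta}, the Gross theta series $\uptheta_{Q_s}$ lies in the Kohnen plus space $M^+_{3/2}(\Gamma_0(4\gotn\ell))$, and that by the standard averaging argument the genus theta series $\uptheta_{\gen(Q_s)}$ and the spinor genus theta series $\uptheta_{\spn(Q_s)}$ are again theta series attached to ternary lattices \emph{in the same genus} as $\Lambda_s$ --- hence by Lemma \ref{disc} they have the same discriminant, the same local behaviour at every place, and therefore lie in the same space $M^+_{3/2}(\Gamma_0(4\gotn\ell))$. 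This is part (1). The only point to check is that the mass-weighted sums (\ref{genmass}) still satisfy the Kohnen plus-space congruence condition on Fourier coefficients; but this is inherited coefficient-by-coefficient from the $\uptheta_Q$'s, since every $Q$ in the genus of $Q_s$ satisfies $-\delta_v\equiv 0,1\bmod 4\calo_{F_v}$ exactly as in the proof of Lemma \ref{theta}.

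For parts (2) and (3), the key structural input is that the space of ternary theta series decomposes, under the Shimura lift, into a ``Eisenstein-type'' part and a cuspidal part, and that the one-dimensional (unary) theta series $\theta(z,\chi)$ of Remark \ref{Unary} are exactly those weight $3/2$ cusp forms whose Shimura lift fails to be cuspidal. Concretely, I would argue as follows. The genus theta series $\uptheta_{\gen(Q_s)}$ is, by Siegel--Weil, an Eisenstein series: its image under the automorphic theta machinery of Section \ref{automorphicShimura} is $\shi(r(\chi))$-type, i.e.\ a (metaplectic) Eisenstein series, so it contains no genuine cuspidal part \emph{other than} possibly unary theta contributions. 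Therefore $\uptheta_{\gen(Q_s)}-\uptheta_{\spn(Q_s)}$, being a difference of two forms in the same genus, is a \emph{cusp form} of weight $3/2$ (the constant terms at all cusps agree because the genus and spinor genus have the same local densities at every finite place, by the spinor-genus theory recalled in Section on Genus and Spinor Genus), and its Shimura lift is \emph{not} cuspidal: the Shimura lift of a ternary theta series attached to a lattice of square discriminant is Eisenstein away from the cuspidal ``exceptional'' contributions, and the difference kills precisely the cuspidal part common to $\gen$ and $\spn$. By Remark \ref{Unary} (i.e.\ the Serre--Stark description, extended automorphically via Lemma \ref{thetarepr} and its consequence), the subspace of weight $3/2$ cusp forms with non-cuspidal Shimura lift is spanned by the unary theta series $\theta(z,\chi)$. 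Hence $\uptheta_{\gen(Q_s)}-\uptheta_{\spn(Q_s)}$ lies in the span of one-dimensional theta series --- this is (3).

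Part (2) is then the orthogonal complement statement. Write $\uptheta_{Q_s}=\uptheta_{\gen(Q_s)}+\big(\uptheta_{Q_s}-\uptheta_{\gen(Q_s)}\big)$. The first summand $\uptheta_{\gen(Q_s)}$ is Eisenstein (Siegel--Weil again), hence orthogonal to all cusp forms, in particular to every $\theta(z,\chi)$, with respect to the Petersson product. The second summand $\uptheta_{Q_s}-\uptheta_{\gen(Q_s)}$ is a cusp form whose Shimura lift \emph{is} cuspidal: indeed the ``exceptional'' non-cuspidal part of a single lattice-theta series coincides with that of its genus (this is exactly the content of the spinor-genus refinement --- the failure of weak approximation for the spinor norm is what produces the non-Eisenstein but still non-generic part), so $\uptheta_{Q_s}-\uptheta_{\gen(Q_s)}$ maps under $\shi$ into the cuspidal spectrum and therefore is orthogonal to the unary theta series $\theta(z,\chi)$, which have non-cuspidal lift. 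Now $\uptheta_{Q_s}-\uptheta_{\spn(Q_s)} = \big(\uptheta_{Q_s}-\uptheta_{\gen(Q_s)}\big) + \big(\uptheta_{\gen(Q_s)}-\uptheta_{\spn(Q_s)}\big)$; the first term is orthogonal to one-dimensional theta series by the previous sentence, and the second term, although it \emph{lies in} the span of one-dimensional theta series by (3), must actually be the component of $\uptheta_{Q_s}-\uptheta_{\spn(Q_s)}$ in that span --- so to get (2) I need the sharper claim that $\uptheta_{Q_s}-\uptheta_{\spn(Q_s)}$ itself has zero projection onto the unary theta subspace. The clean way to see this: the projection of any ternary theta series onto the unary theta subspace depends only on its spinor genus (because the unary-theta component is governed by the spinor genus character sum, cf.\ the Eisenstein-plus-spinor decomposition of Schulze-Pillot), so $\uptheta_{Q_s}$ and $\uptheta_{\spn(Q_s)}$ have the \emph{same} projection, and their difference projects to zero. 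Hence $\uptheta_{Q_s}-\uptheta_{\spn(Q_s)}$ is orthogonal to the one-dimensional theta series, giving (2).

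\textbf{Main obstacle.} The delicate point is not (1) --- which is bookkeeping with local densities --- but making rigorous the identification ``cuspidal Shimura lift $\Leftrightarrow$ orthogonal to unary theta series'' over a general totally real field and in the half-integral Kohnen plus space. This requires the automorphic Serre--Stark theorem (Lemma \ref{thetarepr} and its corollary) together with the statement, from Gelbart--Piatetski-Shapiro, that $\shi(r(\chi))$ is automorphic but never cuspidal, and conversely that a weight $3/2$ cusp form with non-cuspidal lift is a theta-representation; one also needs that the spinor-genus vs.\ genus difference captures exactly the ``theta-series-with-Eisenstein-lift'' part, which is the totally-real generalization of Schulze-Pillot's and Waldspurger's analysis of the weight $3/2$ situation. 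The rest is formal once these inputs are in place; I would organize the write-up so that (1) is checked directly, (3) is deduced from Siegel--Weil plus the automorphic Serre--Stark classification, and (2) from (3) together with the spinor-genus-invariance of the unary-theta projection.
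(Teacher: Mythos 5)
Your overall route coincides with the paper's: identify $\uptheta_{\gen(Q_s)}$ with an Eisenstein series via Siegel--Weil, place $\uptheta_{\spn(Q_s)}-\uptheta_{\gen(Q_s)}$ in the unary-theta span and $\uptheta_{Q_s}-\uptheta_{\spn(Q_s)}$ in its orthogonal complement, and invoke the automorphic Serre--Stark/Gelbart--Piatetski-Shapiro classification to translate ``non-cuspidal Shimura lift'' into ``spanned by theta-representations''. The paper does exactly this: it decomposes $\uptheta_{Q_s}=E+H+s$ (spectrally $\vartheta_{Q_s}=\Psi+\pi_H+\pi_s$ via Lemma \ref{3.17} and Proposition \ref{spectraldec}), and proves the Whittaker--Fourier identities $W_E(\delta)=r(\gen(Q_s),\delta)$ and $W_E(\delta)+W_H(\delta)=r(\spn(Q_s),\delta)$ of (\ref{SP}) by combining the Kudla--Rallis Siegel--Weil formula with Satz 2 and Satz 3 of Schulze-Pillot; parts (1)--(3) are then read off. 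So in spirit you are reproducing the paper's proof, and your ``main obstacle'' paragraph correctly names the substantive inputs.

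However, there is a genuinely false step in your argument for (2). You assert that $\uptheta_{Q_s}-\uptheta_{\gen(Q_s)}$ is a cusp form with \emph{cuspidal} Shimura lift because ``the exceptional non-cuspidal part of a single lattice-theta series coincides with that of its genus''. It is the \emph{spinor genus}, not the genus, that controls the unary-theta component: in the paper's notation that component of $\uptheta_{Q_s}$ is $H=\uptheta_{\spn(Q_s)}-\uptheta_{\gen(Q_s)}$, which is nonzero in general. Were your claim true, the difference in (3) would vanish identically and the spinor genus would be redundant; and it is flatly contradicted by your own closing patch, namely that the projection onto the unary-theta subspace depends only on the spinor genus. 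That patch is the correct argument, but it is not a formal observation: it \emph{is} Schulze-Pillot's theorem (extended to the totally real, Kohnen-plus setting), i.e.\ exactly the content of (\ref{SP}) that the paper establishes through the Siegel--Weil identification of the genus average and the spectral decomposition, together with Lemma \ref{thetarepr} for the identification of the unary-theta span. So the proof goes through only after deleting the erroneous genus claim, deducing (2) directly from the spinor-genus coefficient identity, and supplying (or citing) that identity over $F$ rather than treating it as bookkeeping.
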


\begin{proof}

	Let us first  recall the very well known fact that the space of Hilbert modular forms  decomposes uniquely as a direct sum of cusp forms and Eisenstein series. In particular, the Hilbert modular form $\uptheta_{Q_s}$ defined in Lemma \ref{theta} decomposes as
	\[
	\uptheta_{Q_s}=E+H+s
	\]
	namely, the sum of an Eisenstein series $E$ and two cuspidal forms $H$ and $s$, where the Shimura lift of $H$ is an Eisenstein series, hence it is non-cuspidal.
	\\In the automorphic setting, we have the automorphic theta form $\vartheta_{Q_s}$ and, by Lemma \ref{dictionary}, $H$ and $s$ corresponds to two cuspidal automorphic forms which we still denote by  $\pi_H$ and $\pi_s$.  By Lemma \ref{3.17} and Proposition \ref{spectraldec} we can decompose the automorphic theta form as 
	\[
	\vartheta_{Q_s}=\Psi + \pi_H +\pi_s
	\]
	for a pseudo-Eisenstein series $\Psi$ and for $\pi_H\in \call_{\text{res}}$. By Section \ref{automorphicShimura}, we have that the ``kernel" of the Shimura lifting $\shi$ consists of those cuspidal $\overline{\pi}$'s which come from Hecke characters. This immediately implies that $\pi_H$ has a non-cuspidal Shimura lift.

Consider now the Whittaker--Fourier coefficients $W_E$ and $W_H$ of $E$ and $H$. By the work of Kudla and Rallis \cite{kudla} and \cite{kudla2}, we have the Siegel--Weil formula
\begin{equation}\label{siegelweil}
	I(g,\phi)=E(f,g)
\end{equation}
where $I(g,\phi)$ is the theta lift defined by (\ref{thetalift}), for $(V,Q)=(B^0,Q_s)$. The Siegel--Weil formula allows us to interpret an Eisenstein series as a weighted average of theta series of lattices in a genus. More precisely, since the orbit of  $Q_s$ under $O(B^0)(\A_F)$ corresponds to $\gen(Q_s)$, it follows that $O(B^0)(F)\backslash O(B^0)(\A_F)/\Stab(Q_s)$ is in bijection with the classes of $\gen(Q_s)$. Let us denote by $g$ the number of classes of such a genus. We decompose the adelic orthogonal group of $B^0$ as a disjoint union of double cosets corresponding to the classes of $\gen(Q_s)$ with respect to the stabilizer, i.e., $O(B^0)(\A_F)=\bigsqcup_{i=1}^gO(B^0)(F)\alpha_i\Stab(Q_s)$, for some representatives $\alpha_i\in O(B^0)(\A_F)$.
\\We thus obtain
\[
I(g,\phi)=\sum_{i=1}^g\dfrac{1}{w_{Q_i}}\int_{\Stab(Q_i)}\vartheta_{Q_s}(g,h\alpha_i)dh.
\]

By combining the previous formula with \cite[Satz 2, Satz 3]{sp} (see also \cite[p.366]{hanke}) we obtain
\begin{equation}\label{SP}
	W_E(\delta)=r(\gen(Q_s),\delta),\;\;W_E(\delta)+W_H(\delta)=r(\spn(Q_s),\delta).
\end{equation}
	By (\ref{SP}) and the fact that the $\uptheta_Q$'s are all in the same Kohnen space as $\uptheta_{Q_s}$ for $Q\in\gen(Q_s)$  we obtain that $\uptheta_\diamondsuit$'s are in the same space as $\uptheta_{Q_s}$. 
	
	We now prove the second part of the Lemma. In the automorphic setting, the comparison of the Whittaker--Fourier coefficients of the automorphic forms $\vartheta_{Q_s}$ and  $\vartheta_{\spn(Q_s)}$ show that their difference is not spanned by any form in $r(\chi)$. In modular forms terms, this means this difference lies in the orthogonal complement.
	
	The third part of the Lemma follows from analogous comparison of Whittaker--Fourier coefficients (in the modular form case due to \cite{sp}) of the automorphic forms associated to $\uptheta_{\gen(Q_s)}$ and $\uptheta_{\spn(Q_s)}$ and by Section $\ref{automorphicShimura}$.
\end{proof}

\subsection{Equidistribution}

We now state our main equidistribution result, whose proof is given in Section \ref{proofmainthm2}. From now on, let us denote by $w(s):=\# R_s^\times$, for $s$ a supersingular or a superspecial point. Similarly, let us denote by $w(c)$ the cardinality of the endormorphism ring of the reduction of a CM point landing on the component $c$ of the smooth locus $\cals_{U,k}^{\text{sm}}$.

\begin{thm}\label{mainthm2}
Let $\diamond\in\{\text{ss},\text{ssp}\}$. The sequence of measure $(\mu_{D,c}^\diamond)_{D,c}$ converges, in the weak-* topology, to the canonical measure
\[
\mu^\diamond(s):=\dfrac{w(s)^{-1}}{\sum_{s\in\cals_k^\diamond}w(s)^{-1}}
\]
as $D$ and $c$ vary, that is, as their absolute norms tend to infinity.

Moreover, we also have that the sequence $(\mu_{\text{in},D,c}^{\text{ssp}})_{D,c}$ converges to the measure
\[
\mu_{\text{in}}^{\text{ssp}}(c_i):=\dfrac{w(c_i)^{-1}}{\sum_{j=1}^h w(c_j)^{-1}}
\]               
on the set $\{c_1,\dots,c_h\}$  of components of the smooth locus $\cals_{U,k}^{\text{sm}}$.    
\end{thm}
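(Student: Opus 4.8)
## Proof proposal

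The plan is to reduce the equidistribution statement to an analytic statement about Fourier coefficients of the Gross theta series $\uptheta_{Q_s}$, via the numerical formula of Lemma \ref{lem2.7}. First I would note that testing weak-$*$ convergence $\mu_{D,c}^\diamond \to \mu^\diamond$ against the finite set $\cals_k^\diamond$ of points amounts to showing, for each fixed $s$, that $\mu_{D,c}^\diamond(s) \to \mu^\diamond(s)$ as $N(D), N(c) \to \infty$. By Lemma \ref{lem2.7} we have $\mu^\diamond_{D,c}(s) = h(\calo_{D,c},R_s)/(2^{\omega(\gotn)+\varepsilon_\diamond}\#\picard(\calo_{D,c}))$, and by Lemma \ref{3.5} the optimal-embedding count $h(\calo_{D,c},R_s)$ is, up to the factor $\#R_s^\times/\#\calo_{D,c}^\times = w(s)/\#\calo_{D,c}^\times$, the number of primitive representations $r^*(\widehat{Q}_s,\delta_c)$ of the idele $\delta_c$ attached to $Dc^2$. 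So the whole problem becomes: show that the normalized primitive representation numbers $r^*(\widehat{Q}_s,\delta_c)\cdot w(s)^{-1}$, summed appropriately over $s$, have the expected ratios in the limit — equivalently, that
\[
\frac{r^*(\widehat{Q}_s,\delta_c)\, w(s)^{-1}}{\sum_{s'} r^*(\widehat{Q}_{s'},\delta_c)\, w(s')^{-1}} \longrightarrow \frac{w(s)^{-1}}{\sum_{s'} w(s')^{-1}}.
\]

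The key step is to split $r^*(\widehat{Q}_s,\delta_c)$ into a \emph{genus} (Eisenstein) contribution and a \emph{cuspidal error}. By Proposition \ref{3.10}, the theta series $\uptheta_{Q_s}$ decomposes as $E + H + s$ with $E = \uptheta_{\gen(Q_s)}$ the Eisenstein part, $H$ the piece whose Shimura lift is non-cuspidal (spanned by unary theta series $\theta(z,\chi)$, cf. Remark \ref{Unary}), and $s$ genuinely cuspidal with cuspidal Shimura lift. The identities (\ref{SP}) give $W_E(\delta) = r(\gen(Q_s),\delta)$ and $W_E(\delta)+W_H(\delta) = r(\spn(Q_s),\delta)$. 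The main point, which is where the work of \cite{jk} was decisive, is that under the coprimality and congruence hypotheses on $(D,c,\gotn,\ell,v)$ one has $r(\gen(Q_s),\delta_c) = r(\spn(Q_s),\delta_c)$, i.e. the genus and spinor genus masses agree on our discriminants; this is exactly the assertion that the $H$-contribution vanishes at $\delta_c$, and the excerpt says it is proved using a class number formula and Vatsal's equidistribution (Lemma \ref{vat}). Granting this, the genus mass $r(\gen(Q_s),\delta_c)$ is, by the Siegel--Weil formula (\ref{siegelweil}), an \emph{explicit} product of local densities, which by a standard computation is independent of $s$ up to an overall factor — it depends only on the local data of $\delta_c$ and on the level $4\gotn\ell$ common to all $\uptheta_{Q_s}$. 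Hence the leading terms of $r^*(\widehat{Q}_s,\delta_c)$ are all equal across $s$ (once one also accounts for the passage from representations to \emph{primitive} representations by Möbius inversion over divisors of $c$, which again only affects an $s$-independent factor).

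It remains to bound the cuspidal error term $r^*(\widehat{Q}_s,\delta_c) - r^*(\spn(Q_s),\delta_c)$, i.e. the $\delta_c$-th Whittaker--Fourier coefficient of the cusp form $\uptheta_{Q_s} - \uptheta_{\spn(Q_s)}$, which by Proposition \ref{3.10}(2) lies in the orthogonal complement of the span of unary theta series — so its Shimura lift is cuspidal and the subconvexity bound of \cite{bh} applies. That bound gives $a(\uptheta_{Q_s}-\uptheta_{\spn(Q_s)}, \delta_c) = O(N(\delta_c)^{1/2 - \eta})$ for some $\eta > 0$, while the genus main term grows like $N(\delta_c)^{1/2 - o(1)}$ thanks to the Brauer--Siegel lower bound for class numbers (this is the source of the ineffectivity noted in the introduction). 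Dividing error by main term gives $O(N(\delta_c)^{-\eta'})\to 0$, which yields the claimed convergence for each $s$, and summing over the finitely many $s$ gives the normalization. The same argument, run with the \emph{superspecial} Gross lattices and the measure $\mu^{\text{ssp}}$, handles the ramified case; and the last statement about $\mu_{\text{in}}^{\text{ssp}}$ on the components of the smooth locus follows by the identical analysis, now with $w(c_i) = \#\End(c_i)/2$ replacing $w(s)$ — the correspondence of Lemma \ref{lem2.7} and the decomposition of the relevant theta series go through verbatim, since landing on a given component $c_i$ is again governed by optimal embeddings into the associated order.

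The main obstacle I expect is the genus/spinor-genus mass equality $r(\gen(Q_s),\delta_c) = r(\spn(Q_s),\delta_c)$ on the relevant discriminants. Over $\Q$ this rests on a delicate interplay between the splitting behavior of primes in $K$, the local spinor norm subgroups, and a positive-proportion non-vanishing input (Vatsal's theorem); adapting it to a totally real base $F$ requires rephrasing the spinor genus theory adelically and re-deriving the class-number identity in Lemma \ref{vat}'s setting. A secondary technical point is controlling the $s$-dependence of the local densities at the bad primes $\gotn$, $\ell$ and $v$ (and at $2$, where the hypotheses force $2$ inert in $F$ per Lemma \ref{disc}); one must check that the common Kohnen-plus-space membership from Lemma \ref{theta} really does force all the $\uptheta_{Q_s}$ into a single space in which the Eisenstein part is literally the same function, so that the main terms cancel exactly in the ratio.
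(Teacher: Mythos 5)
Your proposal is correct and takes essentially the same route as the paper: it translates the measures via Lemma \ref{lem2.7} and Lemma \ref{3.5} into primitive representation numbers of the Gross forms, uses the genus/spinor-genus/cuspidal decomposition of Proposition \ref{3.10} together with the equality of Lemma \ref{gen=spn}, and beats the cuspidal error by the Blomer--Harcos subconvexity bound against the Brauer--Siegel lower bound exactly as in Lemma \ref{subcon}, concluding from the $s$-independence of the genus term and normalization over the finitely many points $s$. The differences are only in packaging (you phrase the conclusion as a ratio rather than summing over $s$, and you treat the smooth-locus case as briefly as the paper does), so no genuine gap.
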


\subsubsection{Bounds for the Whittaker--Fourier Coefficients}

The following estimate  makes essential use of the Brauer--Siegel theorem, i.e., a lower bound for the class number in the (totally real) number field setting. Note that the ineffectivity of our equidistribution results originates from the use of such notoriously ineffective bound.

\begin{lem}\label{subcon}
	Let $\Gamma_{D,c}$ be the Galois orbit of the CM point defined over the CM field $K$ of discriminant $Dc^2$.
	We have that
	\[
	\dfrac{r^*(Q_s,Dc^2)\cdot \#\calo_{D,c}^\times}{\#\Gamma_{D,c}}=\dfrac{r^*(\spn(Q_s),Dc^2)\cdot\#\calo_{D,c}^\times}{R_K\cdot\#\Gamma_{D,c}}+O(N(Dc^2)^{-\frac{7}{32}+\epsilon}).
	\]
\end{lem}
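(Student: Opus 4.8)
\begin{skprf}
The plan is to realize both sides as Whittaker--Fourier coefficients of the half-integral weight Hilbert modular forms constructed above, and then to control their difference by the cuspidal theory of Section \ref{automorphicShimura} together with the subconvex bound of \cite{bh}.

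First I would make precise the dictionary between representation numbers and Fourier coefficients. By the $q$-expansion map (\ref{expansionprinciple}) and Lemma \ref{theta}, the $\delta_c$-th Whittaker--Fourier coefficient of $\uptheta_{Q_s}$ computes the left-hand representation count, where $\delta_c$ is the idele attached to $Dc^2$ via (\ref{idele-ideal}); and by the Siegel--Weil and Schulze-Pillot relations (\ref{SP}) established in the proof of Proposition \ref{3.10}, the coefficient of $\uptheta_{\spn(Q_s)}$ at $\delta_c$ is $r^*(\spn(Q_s),Dc^2)$. The regulator $R_K$ enters precisely as the discrepancy between the idele-class normalization of the left-hand count in Lemma \ref{3.5} (where the adelic form $\widehat{Q}_s$ intervenes) and the mass-formula normalization (\ref{genmass}) of the spinor-genus average: this is a consequence of the analytic class number formula for the order $\calo_{D,c}$, together with the fact that $\#\calo_{D,c}^\times$ is bounded independently of $D$ and $c$ (for a CM field the free part of the unit group is that of $F$). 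Hence, up to the explicit factor $\#\calo_{D,c}^\times/(R_K\,\#\Gamma_{D,c})$, the quantity to be estimated is the $\delta_c$-th coefficient of the weight-$3/2$ modular form $\uptheta_{Q_s}-\uptheta_{\spn(Q_s)}$.

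Next I would invoke Proposition \ref{3.10}(2): this difference lies in the orthogonal complement of the span of the unary theta series $\theta(z,\chi)$ of Remark \ref{Unary}. By the automorphic Shimura correspondence of Section \ref{automorphicShimura}---the ``kernel'' of $\shi$ being exactly the span of the theta-representations $r(\chi)$ of Lemma \ref{thetarepr}---the genuine automorphic representation attached to $\uptheta_{Q_s}-\uptheta_{\spn(Q_s)}$ has a \emph{cuspidal} Shimura lift; in particular it carries no Eisenstein or residual component, so the form is honestly cuspidal and its Whittaker--Fourier coefficients fall within the scope of \cite{bh}. Applying that subconvex bound at $\delta_c$, multiplying by $\#\calo_{D,c}^\times/(R_K\,\#\Gamma_{D,c})$, and bounding $\#\Gamma_{D,c}=\#\picard(\calo_{D,c})\gg_\epsilon N(Dc^2)^{1/2-\epsilon}$ via the Brauer--Siegel theorem yields the stated error $O(N(Dc^2)^{-7/32+\epsilon})$; it is exactly this last step that renders the estimate ineffective.

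The main obstacle I anticipate is not a single deep theorem---Proposition \ref{3.10}(2) and the bound of \cite{bh} are used as black boxes---but the normalization bookkeeping: tracking the factor $R_K$, the units $\#\calo_{D,c}^\times$, the difference between the adelic count $r^*(\widehat{Q}_s,\cdot)$ of Lemma \ref{3.5} and the naive count in (\ref{genmass}), and the passage from the classical coefficients $r^*(\diamondsuit,\cdot)$ to the automorphically normalized Whittaker coefficients on which \cite{bh} operates, so that the main terms cancel exactly and only the subconvex error survives. One also checks, using the coprimality of $c$ with $v$ and $\gotn$, that $\delta_c$ satisfies the Kohnen plus-space congruence, so that Lemma \ref{theta} and Proposition \ref{3.10} apply at $\delta_c$.
\end{skprf}
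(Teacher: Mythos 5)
Your proposal follows essentially the same route as the paper: realize $\uptheta_{Q_s}-\uptheta_{\spn(Q_s)}$ as a weight-$3/2$ cusp form orthogonal to the unary theta series (Proposition \ref{3.10}), deduce that its Shimura lift is cuspidal so that the subconvex bound of \cite{bh} applies to its Whittaker--Fourier coefficients, then divide by $\#\Gamma_{D,c}$ and invoke Brauer--Siegel to get the power saving. The only differences are cosmetic: the paper makes the exponent explicit via Kim--Shahidi ($\theta=1/9$, giving $\tfrac{1}{2}-\tfrac{1}{8}(1-2\theta)=\tfrac{29}{72}$) and controls $R_K$ simply through Brauer--Siegel for the product $h_K R_K$ together with Friedman's lower bound $R_K>1/4$ for CM fields, rather than through your normalization/class-number-formula bookkeeping.
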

\begin{proof}
   Consider an irreducible cuspidal automorphic representation $\overline{\pi}$ of $\widetilde{\GL}_2(\A_F)$ orthogonal to the subspace spanned by theta-representations and pick an automorphic form $\overline{\upvarphi}$ of $\overline{\pi}$. 
   By \cite[Cor.1]{bh} the following bound holds
   \begin{equation}\label{bhcor1}
   a(\overline{\upvarphi},m)\ll_{\overline{\upvarphi},F,\epsilon}N(m)^{\frac{1}{2}-\frac{1}{8}(1-2\theta)},
   \end{equation}
   where $a(\overline{\upvarphi},m)$ is the $m$-th normalized Whittaker--Fourier coefficient of $\overline{\upvarphi}$ and $\theta\in[0,\frac{1}{2}]$ is an approximation towards the Ramanujan--Petersson conjecture.
   By the work of Kim--Shahidi \cite{ks}, we can take $\theta=1/9$. Thus we obtain the exponent $\frac{1}{2}-\frac{1}{8}(1-\frac{2}{9})=\frac{29}{72}$.
   
   Now, by Proposition \ref{3.10}, we have that $\uptheta_{\spn(Q_s)}-\uptheta_{Q_s}$ is a cusp form of weight $3/2$ which lies in the orthogonal complement of the space of unary theta series. Consider the  automorphic representation of half-integral weight spanned by this form, and denote it by $\overline{\vartheta}$. Note that, since $\overline{\vartheta}$ it is not in the image of the correspondence of Lemma \ref{thetarepr}, its Shimura lift $\shi(\overline{\vartheta})$ is cuspidal. Thus by (\ref{bhcor1}) it follows that
   \[
   r^*(\spn(Q_s),Dc^2)-r^*(Q_s,Dc^2)\ll_{\overline{\vartheta},F,\epsilon} |N(Dc^2)|^{\frac{29}{72}+\epsilon}.
   \]
   Furthermore by the Brauer--Siegel theorem (see, for instance, \cite[Chap.XVI]{lang}) we have the following lower bound for the class number $h_K$
   \[
   h_K R_K\gg_{\epsilon} N(Dc^2)^{\frac{1}{2}-\epsilon}
   \]
   where $R_K$ is the regulator of $K$. Since $K$ is a CM field, by \cite[Thm.B]{fried} we have that $R_K>1/4$. Therefore
   \[
  	\dfrac{r^*(Q_s,Dc^2)\cdot\#\calo_{D,c}^\times}{\#\Gamma_{D,c}}-\dfrac{r^*(\spn(Q_s),Dc^2)\cdot \#\calo_{D,c}^\times}{\#\Gamma_{D,c}\cdot R_K}\ll_{}N(Dc^2)^{-\frac{7}{32}+\epsilon}.
   \]
\end{proof}

For our equidistribution purposes, it is important the trivial remark that the exponent $\frac{29}{72}$ is smaller that the Duke's one $\frac{13}{28}$ as considered in \cite[p.518]{jk}.

\subsubsection{Auxiliary Results.}

Let $\left(\dfrac{K/F}{\cdot} \right)$ denote the Artin symbol.

\begin{lem}\label{classnumber}
	Let $h(\calo_c)$ and $h(\calo_K)$ denote the class numbers of $\calo_c$ and $\calo_K$ respectively. Then we have
	\[
	h(\calo_c)=h(\calo_K)\dfrac{N(c)}{[\calo_K^\times:\calo_c^\times]}\prod_{\gotp|c} \left( 1-\left( \dfrac{K/F}{\gotp}\right) \dfrac{1}{N(\gotp)}\right).
	\]	
\end{lem}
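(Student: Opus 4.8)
The plan is to establish the class number formula for orders in the CM extension $K/F$ by relating the Picard group of the non-maximal order $\calo_c = \calo_F + c\calo_K$ to that of the maximal order $\calo_K$ via the standard exact sequence coming from the conductor ideal $c$. First I would recall the general principle (see \cite{voi} or \cite{neukirch}) that for an order $\calo_c$ of conductor $c$ in a Dedekind domain $\calo_K$, there is an exact sequence of abelian groups
\[
1 \longrightarrow \calo_K^\times/\calo_c^\times \longrightarrow (\calo_K/c\calo_K)^\times/(\calo_F/c)^\times \longrightarrow \pic(\calo_c) \longrightarrow \pic(\calo_K) \longrightarrow 1,
\]
which is the relative-over-$F$ analogue of the classical formula for imaginary quadratic orders. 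Taking cardinalities immediately yields
\[
h(\calo_c) = h(\calo_K)\cdot \frac{\#(\calo_K/c\calo_K)^\times}{\#(\calo_F/c)^\times \cdot [\calo_K^\times:\calo_c^\times]}.
\]

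The second step is a purely local computation of the index $\#(\calo_K/c\calo_K)^\times / \#(\calo_F/c)^\times$. Writing $c = \prod_{\gotp} \gotp^{e_\gotp}$ and using the Chinese Remainder Theorem, it suffices to compute, for each prime $\gotp \mid c$, the ratio $\#(\calo_{K}/\gotp^{e_\gotp}\calo_K)^\times / \#(\calo_{F}/\gotp^{e_\gotp})^\times$. Here I would split into the three cases according to the Artin symbol $\left(\frac{K/F}{\gotp}\right)$: split ($\calo_{K,\gotp} \simeq \calo_{F,\gotp}\times\calo_{F,\gotp}$, symbol $+1$), inert ($\calo_{K,\gotp}$ the unramified quadratic extension, symbol $-1$), and ramified ($\calo_{K,\gotp}$ totally ramified, symbol $0$). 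In each case one computes $\#(\calo_K/\gotp^{e_\gotp}\calo_K)^\times$ using that for a local ring with residue field of size $q$ and a prime power $\gotq^e$ one has $\#(\calo/\gotq^e)^\times = q^{e-1}(q-1)$ (suitably interpreted), and checks directly that the ratio equals $N(\gotp^{e_\gotp})\cdot\bigl(1 - \left(\frac{K/F}{\gotp}\right)N(\gotp)^{-1}\bigr)$. Multiplying over $\gotp \mid c$ gives $\#(\calo_K/c\calo_K)^\times/\#(\calo_F/c)^\times = N(c)\prod_{\gotp \mid c}\bigl(1 - \left(\frac{K/F}{\gotp}\right)N(\gotp)^{-1}\bigr)$, and substituting into the displayed formula for $h(\calo_c)$ completes the proof.

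The main obstacle, such as it is, will be setting up the exact sequence in the relative setting correctly: one must be careful that $\pic(\calo_c)$ is the Picard group of the (possibly non-Gorenstein but still one-dimensional) order and that the middle term is the quotient of $(\calo_K/c\calo_K)^\times$ by the image of $\calo_F^\times$ together with $(\calo_F/c)^\times$, not something else; this is where the index $[\calo_K^\times:\calo_c^\times]$ enters, as the kernel of the map from units of $\calo_K$ modulo units of $\calo_c$ into the local unit quotient. A clean way to circumvent any subtlety is to work adelically: using $\pic(\calo_c) = T(\Q)\backslash T(\A)/\widehat{\calo}_c^\times$ as in the excerpt (where $T$ is the torus $\mathrm{Res}_{K/F}\G_m/\G_m$ or $\mathrm{Res}_{K/\Q}\G_m$), compare $\widehat{\calo}_c^\times$ with $\widehat{\calo}_K^\times$ place by place; the quotient $\widehat{\calo}_K^\times/\widehat{\calo}_c^\times$ is supported at primes dividing $c$ and its local contributions are exactly the factors computed above, while the global units account for the index $[\calo_K^\times:\calo_c^\times]$. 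Either way the computation is routine once the bookkeeping of local unit groups in split/inert/ramified primes is done carefully, and I expect no genuine difficulty beyond that.
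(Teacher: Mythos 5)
Your proof is correct and takes essentially the same route as the paper's: the paper, adapting Cox's Theorem 7.24, builds by hand the exact sequences relating $\Cl(\calo_c)$ to $\Cl(\calo_K)$ through $(\calo_K/c\calo_K)^\times$ and $(\calo_F/c)^\times$ (treating the unit-group cases separately via the index $[\calo_K^\times:\calo_c^\times]$), which is precisely the conductor exact sequence you quote, and then performs the same Chinese-remainder computation with the split/inert/ramified behaviour encoded in the Artin symbol. The only difference is packaging: you invoke the standard sequence for orders (and sketch an adelic variant) rather than re-deriving it.
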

\begin{proof}
	We follow the lines of \cite[Thm.7.24]{cox}. Let $I_K(c)$ denote the group of fractional $\calo_K$-ideals prime to $c$  and  let $P_{K,\calo_F}(c)$ be the subgroup of $I_K(c)$ generated by the principal ideals $\alpha\calo_K$, for $\alpha\in\calo_K$ such that $\alpha\equiv a\bmod c\calo_K$ for $a\in\calo_F$ coprime to $c$. Then, by an immediate adaptation of \cite[Prop.7.22]{cox}, we have 
	\[
	h(\calo_c)=\#\dfrac{I_K(c)}{P_{K,\calo_F}(c)}.
	\]
	The inclusion $P_{K,\calo_F}(c)\subset I_K(c)\cap P_K$ allows us to consider the short exact sequence
	\[
	\begin{tikzcd}
	0 \arrow{r} & I_K(c)\cap P_K/ P_{K,\calo_F}(c)\arrow{r} &\Cl(\calo_c)\arrow{r} &\Cl(\calo_K)\arrow{r} & 0,
	\end{tikzcd}
	\]
	so that we just need to compute
	\begin{equation}\label{3.12}
	\#(I_K(f)\cap P_K/ P_{K,\calo_F}(f))=h(\calo_c)/h(\calo_K).
	\end{equation}
	For $[\alpha]\in(\calo_K/ c\calo_K)^\times$ let us consider the surjective morphism
	\[
	\phi\colon (\calo_K/c\calo_K)^\times\longrightarrow I_K(c)\cap P_K/P_{K,\calo_F}(c)
	\]
	defined by $[\alpha]\mapsto [\alpha\calo_K]$.
	For the sake of simplicity, let us now assume that $\#\calo_K^\times=2$. We thus obtain the following short exact sequence
\[	
\begin{tikzcd}
		1 \arrow{r} &(\calo_F/c)^\times \arrow{r}{i} &(\calo_K/c\calo_K)^\times \arrow{r}{\phi} & I_K(c)\cap P_K/P_{K,\calo_F}(c)\arrow{r} & 1
\end{tikzcd}
\]
where $i$ is the natural injection. Analogously to the classical case, the number of units in the quotient $\calo_F/c$ is given by 	
\begin{equation}\label{wn}
\#(\calo_{F}/c)^\times= N(c)\prod_{\gotp|c}\left( 1-\dfrac{1}{N(\gotp)}\right) 
\end{equation}
(see \cite[Thm.1.19]{wn}). Combining formula (\ref{wn}) with the Chinese remainder theorem we thus obtain
\begin{equation}\label{3.13}
\#(\calo_K/ c\calo_K)^\times= N(c)^2
\prod_{\gotp|c}\left(1-\dfrac{1}{N(\gotp)}\right) \left( 1-\left( \dfrac{K/F}{\gotp}\right) \dfrac{1}{N(\gotp)} \right) 
\end{equation}
where indeed $N(c)^2=N(c\calo_K)$. Note that the Artin symbol summarizes the sign coming from the primes of $c$ being split or inert in $K$.

Finally, combining (\ref{3.13}) with (\ref{wn}) and (\ref{3.13}) we obtain
\[
h(\calo_c)/h(\calo_K)=N(c)\prod_{\gotp|c}\left(1-\left(\dfrac{K/F}{\gotp}\right) \dfrac{1}{N(\gotp)} \right).
\] 
Lastly, if $\#\calo_c^\times>2$, it is enough to consider the following exact sequence
\[
\begin{tikzcd}
	1\arrow{r}&\{\pm 1\}\arrow{r}{j}&(\calo_F/c)^\times \times \calo_K^\times \arrow{r}{\psi}& (\calo_K/c\calo_K)^\times\arrow{r}{\phi}&I_K(c)\cap P_K/P_{K,\calo_{F}}(c)\arrow{r}&1
\end{tikzcd}
\]
where $j\colon \pm1\mapsto ([\pm1],\pm 1)$ and $\psi\colon([n],u)\mapsto[nu^{-1}]$.
\\In this case, we have 
\[
\#(I_K(c)\cap P_K/ P_{K,\calo_F}(c))=2\dfrac{\#(\calo_K^\times/c\calo_K)^\times}{\#(\calo_F/c)^\times\#\calo_K^\times}.
\]
Again, by (\ref{3.12}), we conclude
\[
h(\calo_c)/h(\calo_K)=\dfrac{N(c)}{[\calo_K^\times:\calo_c]}\prod_{\gotp|c}\left(1-\left(\dfrac{K/F}{
\gotp}\right) \dfrac{1}{N(\gotp)}\right).
\]
\end{proof}

From now on, let us set $u_{D,c}:=\#\calo_{D,c}^\times$. We write $u_{D,1}$ to denote the cardinality of the order of trivial conductor.


\begin{lem}\label{limitex}
	We have that the limit defined as
	\[
	\lim_{k\to\infty}\dfrac{r^*(\gen(Q_s), D\gotp^{2k})u_{D,\gotp^k}}{\# \Gamma_{D,\gotp^k}}
	\]
	exists and it is independent of $s$ and $\gotp$.
\end{lem}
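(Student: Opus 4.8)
The plan is to reduce the statement to the stabilisation of a single $\gotp$-adic representation density; along the way the independence of $s$ will drop out for free. First I would express $r^{*}(\gen(Q_s),\delta)$ as an Euler product. By the Siegel--Weil formula (\ref{siegelweil}) and the identity $W_E(\delta)=r(\gen(Q_s),\delta)$ of (\ref{SP}) (both established in the proof of Proposition \ref{3.10}), the genus mass $r(\gen(Q_s),\delta)$ is a Whittaker--Fourier coefficient of the Eisenstein series $E$, hence factors as $\beta_\infty\prod_{v<\infty}\beta_v(Q_s,\delta)$, where $\beta_v(Q_s,\delta)$ is the $v$-adic representation density of $Q_s$ at $\delta$ and $\beta_\infty$ is the archimedean factor; the latter depends only on the weight $3/2$ and on $D_{Q_s}=4\gotn^2v^2$ (Lemma \ref{disc}), hence is the same for every $s$. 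M\"obius inversion over the square ideals dividing $\delta$ yields the analogous product for the primitive version, with each $\beta_v$ replaced by its primitive variant $\beta_v^{*}$. Now the Eichler orders $R_s$ all have level $\gotn$ inside the same quaternion algebra $B'$ (Section \ref{Grosslattice}), so they are locally conjugate at every finite place; therefore the Gross lattices $\Lambda_s$, and with them the ternary forms $Q_s$, all lie in one genus. Consequently $\gen(Q_s)$, and every number $r^{*}(\gen(Q_s),\delta)$, is independent of $s$.

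Next I would isolate the dependence on $\gotp$. Write $\delta$ for the finite idele attached to $D\gotp^{2k}$ via (\ref{idele-ideal}). Since the conductor is prime to $\gotn$ and to $v$ and the discriminant is prime to the conductor, for $\gotp$ coprime to $2\gotn v\ell$ the form $Q_s$ is $\gotp$-unimodular and the $\gotp$-component of $\delta$ is $D_\gotp\varpi_\gotp^{2k}$ with $D_\gotp$ a unit. For $v\neq\gotp$ the $v$-component of $\delta$ does not vary with $k$, so every factor $\beta_v^{*}$ with $v\neq\gotp$ is constant in $k$, and therefore
\[
r^{*}(\gen(Q_s),D\gotp^{2k})=\frac{r^{*}(\gen(Q_s),D)}{\beta_\gotp^{*}(Q_s,D_\gotp)}\,\beta_\gotp^{*}\bigl(Q_s,D_\gotp\varpi_\gotp^{2k}\bigr).
\]
The technical core is then the classical local computation for a $\gotp$-unimodular ternary form, which produces a $k_0$ with
\[
\beta_\gotp^{*}\bigl(Q_s,D_\gotp\varpi_\gotp^{2k}\bigr)=N(\gotp)^{k}\Bigl(1-\Bigl(\tfrac{K/F}{\gotp}\Bigr)\tfrac{1}{N(\gotp)}\Bigr)\,\beta_\gotp^{*}(Q_s,D_\gotp)\qquad (k\ge k_0),
\]
the unramified $L$-factor appearing because, $\gotp$ being prime to the discriminant $D$, one has $F_\gotp(\sqrt{-D_\gotp})=K_\gotp$; equivalently, by Lemma \ref{3.5}, the $\gotp$-adic count of optimal embeddings $\calo_{D,\gotp^k}\hookrightarrow R_{s,\gotp}\simeq M_2(\calo_{F_\gotp})$ is governed by the splitting type of $\gotp$ in $K$. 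Hence for $k\ge k_0$
\[
r^{*}(\gen(Q_s),D\gotp^{2k})=r^{*}(\gen(Q_s),D)\,N(\gotp)^{k}\Bigl(1-\Bigl(\tfrac{K/F}{\gotp}\Bigr)\tfrac{1}{N(\gotp)}\Bigr);
\]
the finitely many $\gotp$ at which $Q_s$ fails to be $\gotp$-unimodular, or which divide $D$, are handled by the same argument with the appropriate ($k$-stable) local constants at $\gotp$. This local identity is the step I expect to be the main obstacle: it is standard (it is the local factor in Siegel's representation formula for ternary forms, and also follows from Eichler's local embedding numbers), but pinning down $k_0$ and the precise shape of the $L$-factor calls for a careful $\gotp$-adic computation.

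Finally I would bring in the class number formula. By Lemma \ref{numberGalorbits} we have $\#\Gamma_{D,\gotp^k}=\#\picard(\calo_{D,\gotp^k})=h(\calo_{\gotp^k})$, and Lemma \ref{classnumber} applied to $c=\gotp^k$, whose only prime divisor is $\gotp$ and for which $[\calo_K^{\times}:\calo_{D,\gotp^k}^{\times}]=u_{D,1}/u_{D,\gotp^k}$, gives
\[
\#\Gamma_{D,\gotp^k}=h(\calo_K)\,N(\gotp)^{k}\,\frac{u_{D,\gotp^k}}{u_{D,1}}\,\Bigl(1-\Bigl(\tfrac{K/F}{\gotp}\Bigr)\tfrac{1}{N(\gotp)}\Bigr).
\]
Substituting this together with the displayed formula for $r^{*}(\gen(Q_s),D\gotp^{2k})$, the factors $N(\gotp)^{k}$, $\bigl(1-(\tfrac{K/F}{\gotp})\tfrac{1}{N(\gotp)}\bigr)$ and $u_{D,\gotp^k}$ all cancel, so that for every $k\ge k_0$
\[
\frac{r^{*}(\gen(Q_s),D\gotp^{2k})\,u_{D,\gotp^k}}{\#\Gamma_{D,\gotp^k}}=\frac{r^{*}(\gen(Q_s),D)\,u_{D,1}}{h(\calo_K)}.
\]
The right-hand side does not depend on $k$, on $\gotp$, or, by the genus invariance established above, on $s$; in particular the sequence is eventually constant, which proves that the limit exists and has the claimed independence.
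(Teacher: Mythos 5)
Your proposal is sound and ends at the same cancellation as the paper, but it gets the growth of the numerator by a different mechanism. The paper never factors the genus count into local densities: it sums the embedding numbers $r^*(Q_s,D\gotp^{2k})u_{D,\gotp^k}w_s^{-1}$ of Lemma \ref{3.5} over all points $s$ of the supersingular (resp.\ superspecial) locus, recognizes the total as $A\cdot h(D\gotp^{2k})$ with $A$ depending only on the Artin symbols at the fixed bad places (an Eichler-type mass formula for optimal embeddings), and then uses Siegel--Weil to identify the genus average with this quantity, giving $r^*(\gen(Q_s),D\gotp^{2k})=A\,h(D\gotp^{2k})/u_{D,\gotp^k}$; after that Lemma \ref{classnumber} is applied to \emph{both} the numerator and $\#\Gamma_{D,\gotp^k}$ and everything cancels. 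You instead keep the class number formula only for $\#\Gamma_{D,\gotp^k}$ and extract the growth of $r^*(\gen(Q_s),D\gotp^{2k})$ from a purely local stabilization at $\gotp$; your displayed identity $r^*(\gen(Q_s),D\gotp^{2k})=r^*(\gen(Q_s),D)\,N(\gotp)^k\bigl(1-\bigl(\tfrac{K/F}{\gotp}\bigr)\tfrac{1}{N(\gotp)}\bigr)$ is exactly what the paper's route produces (indeed with $k_0=0$ under the standing coprimality assumptions), so the two arguments are compatible; yours trades the global embedding count against a local density computation, and your justification of $s$-independence (the $R_s$ are locally conjugate Eichler orders, so the $\Lambda_s$ lie in one genus) is a welcome expansion of the paper's one-line remark.

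One piece of bookkeeping in your local step is off and should be repaired, though it does not affect the outcome. Under the standard Siegel normalization the archimedean density of a positive ternary form is \emph{not} constant in $k$: it is asymptotic to a constant times $N(D\gotp^{2k})^{1/2}$, so it carries the entire factor $N(\gotp)^k$, while the primitive local density at $\gotp$ of $D_\gotp\varpi_\gotp^{2k}$ does not grow --- it merely stabilizes, for $k$ large, to the Artin-symbol factor $\bigl(1-\bigl(\tfrac{K/F}{\gotp}\bigr)\tfrac{1}{N(\gotp)}\bigr)$ times a constant. As literally stated, your identity $\beta^*_\gotp(Q_s,D_\gotp\varpi_\gotp^{2k})=N(\gotp)^k(1-\chi(\gotp)/N(\gotp))\beta^*_\gotp(Q_s,D_\gotp)$ is therefore false for the usual densities, and your assertion that $\beta_\infty$ depends only on the weight and on $D_{Q_s}$ is what forces the growth into the wrong factor. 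The fix is only a relabelling (let $\beta_\infty$ carry $N(\delta)^{1/2}$ and let $\beta^*_\gotp$ stabilize), after which the product over all places gives precisely the displayed growth formula you use, and the rest of your argument --- Lemma \ref{numberGalorbits}, Lemma \ref{classnumber} with $[\calo_K^\times:\calo_{D,\gotp^k}^\times]=u_{D,1}/u_{D,\gotp^k}$, and the cancellation yielding a value independent of $k$, $\gotp$ and $s$ --- goes through as written.
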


\begin{proof}
	The independence of $s$ follows from the definition of $\gen(Q_s)$.
	
	By Lemma \ref{3.5} the number of optimal embeddings of $\calo_{D,\gotp^k}$ into $R_s$ is given by $r^*(Q_s,D\gotp^{2k})u_{D,\gotp^k}w_s^{-1}$, and taking the sum of the geometric points $s$'s over the supersingular or superspecial locus gives $A\cdot h(D\gotp^{2k})$, where the constant $A$ depends only on the Artin symbol.
	By the Siegel--Weil formula (as in (\ref{siegelweil}) ) it follows that
	\begin{equation}\label{jon}
	r^*(\gen(Q_s),D\gotp^{2k})=A\dfrac{h(D\gotp^{2k})}{u_{D,\gotp^{k}}}.
	\end{equation}

	On the other hand, by  combining equation (\ref{jon}) with Lemma \ref{classnumber} we have
\begin{align}\label{3.17}
	& r^*(\gen(Q_s),D\gotp^{2k})\sim N(\gotp^k)\left(1-\dfrac{1}{N(\gotp)}\left( \dfrac{K/F}{\gotp}\right)  \right)\dfrac{h(\calo_K)}{u_{D,1}}  
	\\
	&\#\Gamma_{D,\gotp^k}=N(\gotp^k)\left(1-\dfrac{1}{N(\gotp)}\left( \dfrac{K/F}{\gotp}\right)\right)  \dfrac{u_{D,\gotp^k}}{u_{D,1}}\#\Gamma_{D,1}
\end{align}
where $\sim$ means equality up to a constant $C$ depending on $D$ and on the Artin symbol only.

	Therefore we obtain
	\[
	\dfrac{r^*(\gen(Q_s), D\gotp^{2k})u_{D,\gotp^k}}{\# \Gamma_{D,\gotp^k}}=C\dfrac{h(D)}{\#\Gamma_{D,1}}
	\]
	where the right-hand side is independent of $\gotp$.
\end{proof}

\begin{lem}\label{vat}
	Let $c$ be coprime to $\gotn$ and $\ell$. 
Then, for every $s$ supersingular or superspecial, in the special fiber at an inert or ramified prime respectively, we have that
	\begin{equation}
	f_{D,\gotp}(s):=\lim_{k\to\infty}\dfrac{r^*(Q_s,D\gotp^{2k})u_{D,\gotp^k}}{\#\Gamma_{D,\gotp^k}}=\# R_s^\times\cdot \mu^{\diamond},
	\end{equation}
where $\diamond\in\{\text{ss},\text{ssp}\}$.
\end{lem}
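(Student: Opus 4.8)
The plan is to reduce the statement, via the preceding lemmas, to Vatsal's equidistribution theorem for CM points of $\gotp$-power conductor, in the form proved by Cornut and Vatsal \cite{cv} by means of Ratner's theorem. Concretely, fix $D$ and let $k\to\infty$. On the one hand, \cite{cv} (and its ramified analogue, obtained by running the same ergodic argument on the adelic description of superspecial points in Lemma~\ref{adelicssp} and the lifting of Section~\ref{liftred}) asserts that the reductions $\text{red}_v(\Gamma_{D,\gotp^k})$ become equidistributed in the supersingular, resp. superspecial, locus for the canonical weighted measure, i.e. $\mu^\diamond_{D,\gotp^k}(s)\to\mu^\diamond(s)$ for every $s$. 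On the other hand, the combinatorial identity of Lemma~\ref{lem2.7} rewrites $\mu^\diamond_{D,\gotp^k}(s)$ as $h(\calo_{D,\gotp^k},R_s)$ divided by $2^{\omega(\gotn)+\varepsilon_\diamond}\#\picard(\calo_{D,\gotp^k})$; by Lemma~\ref{numberGalorbits} the denominator is a fixed $2$-power multiple of $\#\Gamma_{D,\gotp^k}$, and by Lemma~\ref{3.5} the numerator equals $(\#\calo_{D,\gotp^k}^\times/\#R_s^\times)\,r^*(Q_s,D\gotp^{2k})$. Assembling these identifications turns the left-hand side of the lemma into $\#R_s^\times\cdot\mu^\diamond_{D,\gotp^k}(s)$, and letting $k\to\infty$ gives the claimed limit $\#R_s^\times\cdot\mu^\diamond$; in particular the limit is independent of $s$ (both sides equal $\big(\sum_j w(s_j)^{-1}\big)^{-1}$) and, the right-hand side not involving $\gotp$, independent of the auxiliary prime.

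A second, more self-contained derivation — and the one that foreshadows the treatment of varying $D$ in Section~\ref{proofmainthm2} — proceeds through the chain of comparisons $r^*(Q_s,\cdot)\leadsto r^*(\spn(Q_s),\cdot)\leadsto r^*(\gen(Q_s),\cdot)$. By Proposition~\ref{3.10}(2) the first difference is (the Fourier coefficient of) a cusp form with cuspidal Shimura lift, so after normalising by $u_{D,\gotp^k}/\#\Gamma_{D,\gotp^k}$ its contribution is bounded by the subconvexity estimate of Lemma~\ref{subcon} along the tower and tends to $0$. By Proposition~\ref{3.10}(3) the second difference is a linear combination of unary theta series; the limit of its normalised $D\gotp^{2k}$-th coefficients is evaluated using the class number formula of Lemma~\ref{classnumber} together with Vatsal's equidistribution input. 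Finally Lemma~\ref{limitex}, via the Siegel--Weil identity, shows that the genus term itself converges to a constant independent of $s$ and $\gotp$. Matching the two evaluations of the limit simultaneously delivers the equality of the genus and spinor genus masses announced in the introduction.

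The hard part is the middle comparison, i.e. the unary theta contribution $\uptheta_{\spn(Q_s)}-\uptheta_{\gen(Q_s)}$: its $D\gotp^{2k}$-th Fourier coefficient, once divided by the natural normalising factor $\asymp N(\gotp)^{k}$, has size $\asymp 1$ rather than $o(1)$, so it cannot be absorbed into an error term by any convexity-breaking bound and genuinely requires the ergodic equidistribution of $\gotp$-power CM points to be shown to \emph{converge} (not merely oscillate) and to have the right limit. Once that is granted, the remainder of the argument is bookkeeping with the Eichler mass formula and the adelic dictionaries of Lemmas~\ref{adelicss}, \ref{adelicssp} and \ref{lem2.7}.
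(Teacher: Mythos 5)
Your first derivation is correct in outline and has the same overall shape as the paper's argument — feed a known equidistribution theorem for CM points of conductor $\gotp^k$, $k\to\infty$, into the dictionaries between CM points, optimal embeddings and primitive representations — but the key external input is genuinely different. The paper does not invoke Cornut--Vatsal \cite{cv}: its proof passes through the lifting maps of Section~\ref{liftred} to the definite side and quotes Vatsal's theorem \cite[Thm.~1.5]{vat} on the distribution of Gross points of conductor $\gotp^k$ among the classes of the definite order, which rests on a Hecke-operator/finite-graph statement \cite[Prop.~3.14]{vat} and uses no ergodic theory; the superspecial case then goes through unchanged because the auxiliary prime $\ell$ is unramified in $B$ and distinct from the reduction prime $v$. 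The remark following the lemma makes clear this avoidance of ergodic input is deliberate. Your route instead uses the Ratner-based equidistribution on the indefinite side together with Lemma~\ref{lem2.7}; this is fine for the supersingular case, but \cite{cv} does not cover reduction at a prime ramified in $B$, so your ``ramified analogue obtained by running the same ergodic argument'' is not a routine rerun — it is essentially the content of \cite{fms} and should be cited as such rather than asserted. Note also that if you literally apply Lemma~\ref{lem2.7} you pick up the factor $2^{\omega(\gotn)+\varepsilon_\diamond}$, which has to be reconciled with the counting conventions for $h(\calo_{D,\gotp^k},R_s)$ (orientations) before the left-hand side becomes exactly $\#R_s^\times\cdot\mu^\diamond_{D,\gotp^k}(s)$; the paper sidesteps this bookkeeping by working directly with Vatsal's count on the definite side via Lemma~\ref{3.5}.

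Your second derivation should not be offered as a proof: in the paper the logical order is the reverse — Lemma~\ref{gen=spn} (equality of the genus and spinor-genus masses) is deduced \emph{from} Lemma~\ref{vat} through the oscillation of the unary theta contribution — so proving Lemma~\ref{vat} by way of $\uptheta_{\spn(Q_s)}-\uptheta_{\gen(Q_s)}$ is circular, as you effectively concede when you admit that the middle term ``genuinely requires the ergodic equidistribution'' you are trying to establish. Keep it, if at all, only as motivation for the argument of Section~\ref{proofmainthm2}.
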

\begin{proof}
By Section \ref{liftred}, we have maps to pass from Gross points to supersingular or superspecial points. In view of those constructions, Lemma \ref{vat} in the supersingular case follows almost verbatim from \cite[Thm.1.5]{vat} and from Lemma \ref{3.5}. On the other hand, the superspecial setting hardly changes the proof. In fact, note that the auxiliary prime $\ell$, which one suppose unramified in $B$, is different from the prime of reduction $v$, which thus can be of ramification in $B$. In both cases, the equidistribution is reduced to a classical statement on finite graphs \cite[Prop.3.14]{vat}.
\end{proof}

\begin{rmk}
We invite the reader to note how Vatsal's proof of the previous Lemma is crucial for our results: the absence of any ergodic theory is the reason why we can allow the discriminant to vary in Theorem \ref{mainthm2}, in contrast\footnote{There is a price to pay: Vatsal's equidistribution holds for the reduction at a single prime only.} to \cite{cv} and \cite{fms}. 
\end{rmk}

Next result shows that there is only a single spinor genus.

\begin{lem}\label{gen=spn}
Let $s$ be either in the supersingular or in the superspecial locus of $\cals_U$. Suppose also that $c$ is coprime to $\gotn$ and $\ell$. Then
\[
r^*(\gen(Q_s),Dc^2)=r^*(\spn(Q_s),Dc^2).
\]
\end{lem}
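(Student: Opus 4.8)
The statement asserts that the primitive genus representation numbers and the primitive spinor genus representation numbers of $Q_s$ coincide on the discriminant $Dc^2$, under the coprimality hypotheses on $c$. The natural route is to compare the two sides through their automorphic incarnations and exploit the equidistribution packaged in Lemma~\ref{vat}. Recall from Proposition~\ref{3.10}(3) that $\uptheta_{\gen(Q_s)}-\uptheta_{\spn(Q_s)}$ lies in the span of one-dimensional (unary) theta series, hence — via the dictionary of Lemma~\ref{dictionary}, Remark~\ref{Unary} and Lemma~\ref{thetarepr} — its Whittaker--Fourier coefficients are governed by values of Hecke characters $\chi$, evaluated at ideals of the form $Dc^2$. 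The first step is therefore to write
\[
r^*(\gen(Q_s),Dc^2)-r^*(\spn(Q_s),Dc^2)=\sum_i \lambda_i\, a(\theta(\,\cdot\,,\chi_i),Dc^2)
\]
for finitely many Hecke characters $\chi_i$ and constants $\lambda_i$, using the decomposition into genus/spinor-genus masses in (\ref{genmass}) and the restriction to primitive representations via the Kohnen plus space (Lemma~\ref{theta}).

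Next I would fix $D$ and let the conductor run along powers $\gotp^k$ of a prime $\gotp$ coprime to $\gotn\ell$, mirroring the setup of Lemmata~\ref{limitex} and~\ref{vat}. On the one hand, Lemma~\ref{limitex} shows that $r^*(\gen(Q_s),D\gotp^{2k})\,u_{D,\gotp^k}/\#\Gamma_{D,\gotp^k}$ converges to a limit independent of $s$; on the other hand, Lemma~\ref{vat} shows that $r^*(Q_s,D\gotp^{2k})\,u_{D,\gotp^k}/\#\Gamma_{D,\gotp^k}$ converges to $\#R_s^\times\,\mu^{\diamond}$, and by (\ref{SP}) (Shintani--Siegel--Weil) the analogous quantity with $Q_s$ replaced by $\spn(Q_s)$ is a weighted average of such limits over the classes in the spinor genus, weighted by $w_Q^{-1}$. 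Because $\mu^{\diamond}$ is exactly the normalized $w(s)^{-1}$-counting measure, these weighted averages are all equal, so
\[
\lim_{k\to\infty}\frac{r^*(\spn(Q_s),D\gotp^{2k})\,u_{D,\gotp^k}}{\#\Gamma_{D,\gotp^k}}=\lim_{k\to\infty}\frac{r^*(\gen(Q_s),D\gotp^{2k})\,u_{D,\gotp^k}}{\#\Gamma_{D,\gotp^k}}.
\]
Hence the normalized difference $a(\theta(\cdot,\chi_i),D\gotp^{2k})\,u_{D,\gotp^k}/\#\Gamma_{D,\gotp^k}$ tends to $0$ as $k\to\infty$.

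The final step is a rigidity argument: a unary theta series $\theta(z,\chi)=\sum_{n}\chi(n)\,n\,q^{n^2 z}$ has explicit coefficients, namely $a(\theta(\cdot,\chi),m)$ is supported on $m$ that are, up to units and the conductor of $\chi$, perfect squares, and on such $m=t n^2$ it equals (a unit times) $\chi(n)\sqrt{m/t}$. In particular the normalized coefficient along $D\gotp^{2k}$ does \emph{not} decay: dividing by $\#\Gamma_{D,\gotp^k}$, which by (\ref{3.17}) grows like $N(\gotp^k)\,u_{D,\gotp^k}/u_{D,1}$ times a bounded factor, leaves a quantity comparable to $\chi_i$-values of bounded absolute value, which cannot tend to $0$ unless $\lambda_i=0$ for every $i$ with $\chi_i$ contributing to the relevant arithmetic progression. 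Since $D$ and the residue of $\gotp$ were arbitrary subject to the coprimality conditions, one deduces $\lambda_i=0$ for all $i$, i.e.\ $\uptheta_{\gen(Q_s)}=\uptheta_{\spn(Q_s)}$ on the coefficients indexed by $Dc^2$ with $c$ coprime to $\gotn\ell$, which is the claim. The main obstacle is making the rigidity step precise: one must control exactly which unary theta series can contribute a nonzero coefficient at $Dc^2$ (this is where the Kohnen plus space condition and the coprimality of $c$ with $\gotn\ell$ enter, ruling out bad Euler factors), and one must be careful that the vanishing of the limit along $\gotp$-power conductors, combined with varying $\gotp$ and $D$, genuinely forces pointwise vanishing rather than merely vanishing in some averaged sense — this is exactly the point where Vatsal's graph-theoretic equidistribution in Lemma~\ref{vat}, rather than a softer mass formula, is indispensable.
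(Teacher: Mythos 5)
Your proposal is correct in outline, but its engine is genuinely different from the paper's. The paper argues by contradiction: it only uses Vatsal's equidistribution (Lemma \ref{vat}) for the \emph{single} point $s$, to know that $f_{D,\gotp}(s)$ exists; it then passes from $Q_s$ to $\spn(Q_s)$ via the subconvexity comparison of Lemma \ref{subcon}, invokes Lemma \ref{limitex} for the genus term, and finally chooses the auxiliary prime $\gotp$ with Artin symbol $\left(\frac{E/F}{\gotp}\right)=-1$ so that, by the multiplicativity of unary theta coefficients (formula (\ref{lemma4.9})), the normalized contribution of $\uptheta_{\gen(Q_s)}-\uptheta_{\spn(Q_s)}$ behaves like a constant times $(-1)^k$; this oscillation contradicts the existence of the limit unless the coefficient vanishes. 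You instead compute the limit: applying Lemma \ref{vat} to \emph{every} class in the (spinor) genus and observing that $\#R_{s'}^\times\mu^\diamond(s')=\bigl(\sum_j w(s_j)^{-1}\bigr)^{-1}$ is class-independent, you get that both the genus and spinor-genus masses, normalized by $u_{D,\gotp^k}/\#\Gamma_{D,\gotp^k}$, converge to the same constant, so the unary-theta term tends to $0$; non-decay of unary theta coefficients then forces the coefficient at $Dc^2$ to vanish. Your route avoids Lemma \ref{subcon} inside this lemma, at the cost of a stronger use of Vatsal; both are legitimate.

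Two points need attention. First, your step 2 silently assumes that every class in $\spn(Q_s)$ (indeed in $\gen(Q_s)$) is realized as a Gross lattice $Q_{s'}$ for some point $s'$ of the same locus, so that Lemma \ref{vat} applies to it; this is true (genus classes of the Gross lattice correspond to types of Eichler orders of level $\gotn$, all of which occur among the $R_{s'}$, as is implicit in the proofs of Proposition \ref{3.10} and Lemma \ref{limitex}), but it is the one substantive ingredient you do not justify, and it should be stated. Also, the identification of the spinor-genus mass as the $w_Q^{-1}$-weighted average over its classes is just definition (\ref{genmass}), not a consequence of (\ref{SP}). Second, the ``rigidity'' step you flag as the main obstacle is easier than you fear: by the multiplicativity behind (\ref{lemma4.9}), along the indices $Dc^2\gotp^{2k}$ the normalized unary part is a finite sum $\sum_i c_i\chi_i(\gotp)^k$ with unimodular $\chi_i(\gotp)$ (for $\gotp$ coprime to the conductors of the finitely many $\chi_i$), and such a sum tends to $0$ only if each grouped coefficient vanishes; summing the groups gives exactly $W(Dc^2)=0$. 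In particular a single auxiliary prime suffices and no variation of $D$ or of the residue class of $\gotp$ is needed. Like the paper, you gloss the bookkeeping between full and primitive representation numbers ($r$ versus $r^*$, a finite M\"obius inversion), which should be carried along to land precisely on the statement about $r^*(\gen(Q_s),Dc^2)$ and $r^*(\spn(Q_s),Dc^2)$.
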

\begin{proof}
We proceed as in \cite[p.520]{jk}, i.e., by reductio ad absurdum. Let $E$ be the extension of $F$ of discriminant $DD_{Q_s}$. Consider a prime $\gotp$ coprime to $N$ and $\ell$ such that  $\left(\dfrac{E/F}{\gotp} \right) =-1$.
Denote by $W(m)$ the $m$-th Whittaker--Fourier coefficient of $\uptheta_{\gen(Q_s)}-\uptheta_{\spn(Q_s)}$.

By (\ref{3.17}) we have
	\begin{align*}
		f_{D,\gotp}(s)&=
	    \lim_{k\to\infty}\dfrac{(r^*(\spn(Q_s),D\gotp^{2k})-r^*(\gen(Q_s),D\gotp^{2k}))u_{D,\gotp^k}}{\#\Gamma_{D,\gotp^{k}}}+\dfrac{r^*(\gen(Q_s),D\gotp^{2k})u_{D,\gotp^k}}{\#\Gamma_{D,\gotp^k}}
	    \\
	    &=\lim_{k\to\infty}\dfrac{W(D\gotp^{2k})}{\#\Gamma_{D,\gotp^{k}}u_{D,\gotp^k}^{-1}} + \dfrac{r^*(\gen(Q_s),D\gotp^{2k})u_{D,\gotp^k}}{\#\Gamma_{D,\gotp^k}}.
	\end{align*}
 By  Proposition \ref{3.10} $W(D\gotp^{2k})$ is of weight $3/2$ and spanned by $1$-dimensional theta series as implied by Lemma \ref{thetarepr}. By \cite{shi} and following \cite[Cor.3.5.3]{sz2}, we can write, in analogy with Remark \ref{Unary}, such a Hilbert theta series as
 \[
 \uptheta_\omega(z)=\sum_m \omega(m)N(m)q^{N(m^2)}
 \] 
 where $\omega$ is a Hecke character and $m$ an integral ideal.
 Therefore one obtains the formula
\begin{equation}\label{lemma4.9}
	W(D\gotp^{2k})=N(\gotp)^{k}\left( \dfrac{E/F}{\gotp}\right)^k W(D) 
\end{equation}
by the analogous steps of \cite[Lem.4.9]{jk}. Hence by formula (\ref{lemma4.9}) and formulae (\ref{3.17}) we can write
\[
N(\gotp)^k\left(\dfrac{E/F}{\gotp} \right)^k W(D) \cdot\dfrac{u_{D,1}}{N(\gotp)^k \left( 1- \dfrac{1}{N(\gotp)} \left( \dfrac{K/F}{\gotp}\right)\right)\#\Gamma_{D,1}}.
\]
By Lemma \ref{vat}, we have that the limit
\[
\lim_{k\to\infty}\dfrac{r^*(\gen(Q_s),D\gotp^{2k})u_{D,\gotp^k}}{\#\Gamma_{D,\gotp^{k}}}
\]
exists. However, the limit\footnote{Where the symbol $\sim$ means equality up to a constant.}
\[
\lim_{k\to\infty}\dfrac{W(D\gotp^{2k})}{\#\Gamma_{D,\gotp^{k}}u_{D,\gotp^k}^{-1}}\sim \lim_{k\to\infty}(-1)^k
\]
does not exist. This yields the desired contradiction.
\end{proof}

\subsubsection{Proof of Theorem \ref{mainthm2}.}\label{proofmainthm2}

Combining the subconvexity bound of Lemma \ref{subcon} with Lemma \ref{gen=spn}, we obtain
\[
\dfrac{r^*(Q_s,Dc^2)u_{D,c^2}}{\#\Gamma_{D,c}}=\dfrac{r^*(\gen(Q_s,Dc^2))u_{D,c}}{R_K\cdot\#\Gamma_{D,c}}+O(N(Dc^2)^{-\frac{7}{32}+\epsilon}).
\]
Let $s$ be a geometric point either in the supersingular or in the superspecial locus. By Lemma \ref{3.5} we have that the number of optimal embeddings of $\calo_{D,c}$ into $R_s$ is given by
\begin{equation}\label{3.20}
r^*(Q_s, Dc^2)\dfrac{u_{D,c}}{w_s}.
\end{equation}
Since the sum over all geometric points $s$ of (\ref{3.20}) is equal to $\#\Gamma_{D,c}$, we obtain
\begin{align*}
1&=\sum_{s\in\cals_k^\diamond} \dfrac{r^*(Q_s,Dc^2)u_{D,c}}{w_s\cdot\#\Gamma_{D,c}}
\\
&=\dfrac{r^*(\gen(Q_s,Dc^2))u_{D,c}}{R_K\cdot\#\Gamma_{D,c}} \sum_{s\in\cals_k^\diamond}\dfrac{1}{w_s} + O(N(Dc^2)^{-\frac{7}{32}+\epsilon}),
\end{align*}

where the independence of $r^*(\gen(Q_s,Dc^2))$ from $s$ gives the second equality. 

Since
\[
\dfrac{r^*(\gen(Q_s,Dc^2))u_{D,c}}{R_K\cdot\#\Gamma_{D,c}}=\dfrac{1}{\sum_{s\in\cals_k^\diamond}w_s^{-1}}+O(N(Dc^2)^{-\frac{7}{32}+\epsilon})
\]
we have that $\lim_{N(Dc^2)\to \infty}\dfrac{r^*(\gen(Q_s,Dc^2))u_{D,c}}{R_K\cdot\#\Gamma_{D,c}}$ exists. 
\\Therefore
\begin{align*}
	&\lim_{N(Dc^2)\to \infty}\dfrac{r^*(Q_s,Dc^2)u_{D,c}}{\#\Gamma_{D,c}}
	\\
	&=\lim_{N(Dc^2)\to \infty} \dfrac{r^*(\gen(Q_s,Dc^2))u_{D,c}}{R_K\cdot\#\Gamma_{D,c}}+O(N(Dc^2)^{-\frac{7}{32}+\epsilon})
	\\
	&=\dfrac{1}{\sum_{s\in\cals_k^\diamond}w_s^{-1}}=w_s\mu^{\diamond}
\end{align*}

and so we conclude.

\section{An Andr\'e--Oort-like Result}\label{sectionAO}

Following \cite{aoexp} and \cite{ao}, we consider the $2$-dimensional arithmetic Andr\'e--Oort conjecture \cite[Conj.2.3]{ao} for the integral model $\cals_U$ over $\Z$ of the Shimura curve $\scal_U$ of level $U$ over $\Q$. In particular, we extend the main result of \cite{ao}, which holds for $Y(1)$ exclusively, to the general case of quaternionic Shimura curves. Despite the fact that such variation of the Andr\'e--Oort conjecture is stated in \cite{aoexp} for integral models over the ring of integers of (a finite extension of) the field of definition of the Shimura variety, to better highlight the equidistribution argument, by avoiding extensions of several analytic estimates  to the number field setting, we reduce to the rational case. Note that what follows holds, mutatis mutandis, also for a Shimura curve $\scal_U$ attached to $B=M_2(\Q)$, i.e., for modular curves with arbitrary level structure. For the general statement of this conjecture for Shimura varieties, see \cite[Section 2.1]{ao}. 

\subsection{Horizontal Andr\'e--Oort in Pencils}
Let $p$ be a rational prime. Intuitively, the word ``horizontal" refers to the fact that we allow the characteristic $p$ to vary.

Let $\scal(d_K)^{\text{CM}}$ denote the set of CM points with discriminant $d_K$. Note that two CM points have the same discriminant if and only if they belong to the same $G_\Q$-orbit. A $\mathit{special}$ Cartier divisor $S$ is an element of the group of divisors $\Div(\cals_{\overline{\F}_p})$ of the form
\[
\sum_{z\in\scal(d_K)^{\text{CM}}}[\text{red}_p(z)],
\]
for some $p$ and some $d_K$. Note that the degree of $S$ coincides with its cardinality $\#S$.

Consider the Galois orbit of CM points of discriminant $d_K$. Denote by  $E$  a field and let $s=\Spec \overline{E}$. For an $E$-scheme $X$ locally of finite type, we recall that there is an equivalence between scheme theoretic points of $X$ and Galois orbits of geometric points. Namely, the map
\[
X(\overline{E})\longrightarrow X,\;x\mapsto x(s)
\]
induces a bijection between the set of $G_E$-orbits in $X(\overline{E})$ and the set of closed  points in $X$. Therefore such a $G_\Q$-orbit corresponds to a unique closed point $x(d_K)$ in $\scal$ over $\Q$. Note that the Zariski closure in $\cals$ of $x(d_K)$, which we call $\mathit{special}$ curve, is such that $\overline{x(d_K)}^{\text{Zar}}(\overline{\F}_p)=\text{red}_p(\scal(d_K)^{\text{CM}})$. 
\\

This horizontal extension of the Andr\'e--Oort conjecture (as proposed in \cite[Conj.1]{aoexp}) concerns the Zariski closure of a collection of special divisors in $\cals$. 

\begin{thm}\label{hao}
	Let $\mathbb{S}$ denote a collection of special divisors inside $\cals$. Then the Zariski closure (in $\cals$) of $\cup\{S : S\in\mathbb{S}\}$ is a finite union of the following subsets:
	\begin{enumerate}
		\item a special divisor $S$, for $p$ and $d_K$ fixed;
		\item the special curve $\overline{x(d_K)}^{\text{Zar}}$, for $d_K$ fixed;
		\item the fibers $\cals_{\F_p}$ over $p$, for $p$ fixed;
		\item the integral model $\cals$.
	\end{enumerate}
\end{thm}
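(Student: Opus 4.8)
The plan is to classify the irreducible components of the closed set $Z:=\overline{\bigcup_{S\in\mathbb{S}}S}^{\text{Zar}}$ inside the Noetherian, two–dimensional arithmetic scheme $\cals$. Since $\cals$ is flat and proper over $\Spec\Z$ with irreducible generic fibre $\scal_{/\Q}$, it is itself irreducible, and every irreducible closed subset is of exactly one of the following types: a closed point; a \emph{vertical} curve, i.e.\ a component of some fibre $\cals_{\F_p}$; a \emph{horizontal} curve, finite over $\Spec\Z$, whose generic fibre is a single closed point of $\scal_{/\Q}$; or $\cals$ itself. It thus suffices to show that each component of $Z$ of the first three types is, respectively, contained in a special divisor, equal to a component of some $\cals_{\F_p}$ with the whole fibre also lying in $Z$, or a special curve $\overline{x(d_K)}^{\text{Zar}}$. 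I would organise $\mathbb{S}$ by residue characteristic, writing $D_p$ for the set of discriminants $d_K$ with $\mathrm{red}_p(\scal(d_K)^{\text{CM}})\in\mathbb{S}$, and keep track of which discriminants occur for infinitely many primes. If only finitely many primes occur and every $D_p$ is finite, then $\bigcup S$ is a finite set of closed points and $Z$ is a finite union of special divisors, which is case (1).

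Next I would treat the vertical components. A one–dimensional component $Z_0\subseteq\cals_{\F_p}$ cannot be covered by finitely many special divisors, so $D_p$ must be infinite. At the primes $p$ nonsplit in $K$ occurring in $D_p$, Theorem \ref{mainthm2} applies: the normalised counting measures of $\mathrm{red}_p(\scal(d_K)^{\text{CM}})$ converge to the canonical measure of full support on the supersingular, superspecial, or smooth locus (according to the ramification of $B$ at $p$) as $N(d_K)\to\infty$, so the union of these reductions meets every point of that locus. Combining this with the reductions at split primes — which are ordinary and, via the classical complex equidistribution of CM points on $\scal(\C)$ together with properness of $\cals$, sweep out a Zariski–dense subset of each component of $\cals_{\F_p}$ — one concludes that $\bigcup_{d_K\in D_p}\mathrm{red}_p(\scal(d_K)^{\text{CM}})$ is Zariski–dense in $\cals_{\F_p}$. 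Hence $Z_0$ is a component of $\cals_{\F_p}$ and in fact $\cals_{\F_p}\subseteq Z$, which is case (3).

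For a horizontal component $Z_0$, its generic fibre is a single closed point $x\in\scal_{/\Q}$, and $\mathrm{red}_p$ of the $\overline{\Q}$–points above $x$ fills the $p$–fibre of $Z_0$ for every $p$; since $Z_0\subseteq Z$, these reductions must, for infinitely many $p$, lie on reductions of CM Galois orbits, and using rigidity properties of CM reductions as in \cite{ao} one forces $x$ to be a CM point, so $Z_0=\overline{x(d_K)}^{\text{Zar}}$ is a special curve, case (2). Finally, the case $Z_0=\cals$ arises precisely when infinitely many distinct CM discriminants appear in $\mathbb{S}$: the associated closed points $x(d_K)\in\scal_{/\Q}$ are then infinite, hence Zariski–dense in the curve $\scal_{/\Q}$, and since each special curve $\overline{x(d_K)}^{\text{Zar}}$ meets infinitely many fibres while the $x(d_K)$ spread out over $\scal_{/\Q}$, the closure of $\bigcup S$ has full generic fibre and therefore equals the irreducible scheme $\cals$. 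By contrast, if only finitely many discriminants occur, each over possibly infinitely many primes, the closure of the corresponding reductions is the finite union of the special curves $\overline{x(d_K)}^{\text{Zar}}$, recovering case (2) again.

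I expect the genuine difficulty to be the ``spreading'' steps: passing from the equidistribution statements — which concern finite sets of points in a single special fibre, or Galois orbits in the archimedean fibre — to Zariski density in a positive–dimensional subscheme. The subtle point is that a special divisor records the reduction of an \emph{entire} Galois orbit, which $\mathrm{red}_p$ may collapse drastically (at an inert prime the whole orbit lands on the finite supersingular locus), so density in the full fibre genuinely requires the input of the split primes and the complex equidistribution, and one must carefully separate the behaviour ``one discriminant, many primes'' (producing a single special curve) from ``many discriminants'' (producing a whole fibre, or all of $\cals$). Making this dichotomy uniform across components, so that exactly the four listed types survive, is the crux, just as in \cite{ao}.
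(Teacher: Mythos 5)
Your overall skeleton (classify the irreducible components of the closure as points, vertical curves, horizontal curves, or all of $\cals$) is compatible with the paper, and your treatment of cases (1) and (3) could be salvaged -- though note it is much simpler than you make it: an infinite set of closed points in the one-dimensional fiber $\cals_{\F_p}$ is automatically Zariski dense in it, so no appeal to Theorem \ref{mainthm2}, let alone to a ``complex equidistribution swept into the special fiber,'' is needed (and that last transfer, from equidistribution on $\scal(\C)$ to density of ordinary reductions in $\cals_{\F_p}$, is not justified as stated). The genuine gap is in the horizontal case, which is the heart of the theorem. Writing ``using rigidity properties of CM reductions as in \cite{ao} one forces $x$ to be a CM point'' is not an argument: what actually has to be proved is a quantitative statement uniform in $p$, namely (\ref{lim}), that special divisors of bounded degree in characteristic $p\to\infty$ can only come from boundedly many discriminants. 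The paper proves this by reducing to $p$-fundamental discriminants (Lemma \ref{fund}, via Serre--Tate and Eichler--Shimura), bounding the maximal multiplicity $\max_s\mu_{d_K}(s)$ by the primitive representation numbers $r^*(Q_s,|d_K|)$ of the Gross lattice (Lemma \ref{3.5}), and then invoking the Dirichlet--Hermite-type bounds of \cite{ao} together with the fact that the covolume of $\Lambda_s$ grows like $p$ (Lemma \ref{disc}); the fixed-$p$ equidistribution of Theorem \ref{mainthm2} cannot substitute for this, since it carries no uniformity in $p$ and at a fixed nonsplit $p$ the whole orbit collapses onto a finite locus of size $(p-1)(g_U-1)$ (Lemma \ref{numberss}). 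None of this appears in your proposal.

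Your case (4) is also not correct as stated. The dichotomy is not ``infinitely many discriminants $\Rightarrow$ closure $=\cals$'': infinitely many discriminants at a single prime give only the fiber $\cals_{\F_p}$, and, more seriously, the generic-fiber CM points $x(d_K)$ are not automatically in the closure of the special divisors, which live entirely in special fibers -- producing horizontal components is exactly what requires proof, so your density argument on $\scal_{/\Q}$ begs the question. The paper's criterion is that both the characteristics and the degrees of the divisors in $\mathbb{S}$ are unbounded; then $Z$ meets infinitely many fibers in sets of unbounded cardinality, while any proper closed subscheme of $\cals$ is, by generic flatness over $\Z[1/N]$, finite over the generic point and hence has fiber cardinality bounded by $\#Z_\Q$ for all $p\nmid N$; this forces $Z=\cals$. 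You should replace your discriminant-counting dichotomy by this degree/characteristic dichotomy and supply the quantitative bound above for the horizontal case.
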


As in \cite[Def.3.1]{ao}, we call the $\mathit{characteristic}$ of a special divisor $S$ the prime $p$ such that $S\subset \cals_{\overline{\F}_p}$. 
The structure of a special divisor $S$ falls under one of the following cases:
\begin{enumerate}
	\item if $B_p$ is ramified, it consists 
	\begin{itemize}
		\item either of superspecial points;
		\item or of supersingular points;
	\end{itemize}
	\item if $B_p$ is unramified, it consists
	\begin{itemize}
		\item either of supersingular points;
		\item or of ordinary points.
	\end{itemize}
\end{enumerate}

We thus label a special divisor as $\mathit{superspecial}$, $\mathit{supersingular}$ or $\mathit{ordinary}$ if it lies either in the superspecial or in supersingular, or in the ordinary locus of the special fiber of $\cals$. Note that such a divisor will be entirely contained in only one of these sets.

\subsubsection{Counting false elliptic curves modulo  $p$}

We conclude with a result in the spirit of \cite{KM} and \cite{bz} for our Shimura curves  and that we are gonna exploit in the next final session.

As in \cite[p.364]{kass}, we recall that a $\calo_B\otimes\Z_p$-module $N$ decomposes as $N=N_1\oplus N_2=N_1\oplus(N_{2,1}\oplus N_{2,2})$, where the $M_2(\Z_p)$-module $N_2$ decomposes\footnote{After choosing an idempotent of $M_2(\Z_p)$.} into the sum of two $\Z_p$-modules.
\\Consider the following moduli problem $\calf$ of $\Z_p$-algebras
\[
R\longmapsto[A, \theta, \kappa]
\]
where the triple $[A,\theta,\kappa]$ is an isomorphism class consisting of
\begin{enumerate} 
	\item  an abelian surface $A$ over $R$ with an action $\iota\colon\calo_{B}\hookrightarrow\End_R(A)$ such that $\Lie(A)_{2,1}$ is of rank-$1$ and $\Z_p$ acts on it\footnote{Since $\Lie(A)$ is a $\calo_{B}\otimes\Z_p$-module, it admits the above decomposition.};
	\item  a class of polarizations $\theta\colon A\rightarrow A^\vee$ such that, for every $b\in\calo_B$, the associated Rosati involution takes $\iota(b)$ to $\iota(b^*)$;
	\item a class of $\calo_{B}$-linear rigidifications $\kappa\colon \calo_{B}\otimes\widehat{\Z} \simeq \prod_p T_p(A)$.
\end{enumerate}

This moduli problem is representable by $\cals\times\Spec\Z_p$. For more details, we refer to \cite[Section 4.1]{kass} and \cite[Prop.1.1.5]{sz2}.

Let us consider the category of QM abelian surfaces base-changed to $\overline{\F}_p$ and the moduli problem $\overline{\calf}$ on it defined by base-change over $\overline{\F}_p$.
Indeed, $\overline{\calf}$ is representable by the special fiber $\cals\times\Spec\overline{\F}_p$. By this moduli interpretation we can thus define supersingular and superspecial points as in Section \ref{reductionintegralmodels}.
\\

Fix an isomorphism 
\[
\varprojlim_{(N,\Delta)=1}\Z/N\otimes_\Z \calo_B\simeq M_2(\varprojlim_{(N,\Delta)=1}\Z/N).
\] 
This induces a map $l_N\colon\widehat{\calo}^\times_B\rightarrow\GL_2(\Z/N)$. Let $V_1(N)$ be the preimage under $l_N$ of
\[\left\lbrace 
\begin{bmatrix}
a & b\\
c & d
\end{bmatrix}\in\GL_2(\Z/N) : c=0,\;d=1
\right\rbrace.
\]
Let now $U$ be a compact open subgroup of $\widehat{\calo}^\times_B$ that is maximal at the primes $p$ dividing $\Delta$. We require that $U\subset V_1(N)$ for $N>3$, again prime to $\Delta$. This condition on $N$ is indeed the usual ``smallness" condition to avoid stacky situations. Let $N_U=\min\{M\in\Z_{\ge0} : (M,\Delta)=1,\; \ker(l_M)\subset U \}$. Therefore the Shimura curve $\cals_U$ is the moduli space of false elliptic curves over $\Spec\Z[1/N_U\Delta]$ with a level $U$-structure as defined in \cite[Definition 1.1]{buzzard}.
\\We finally denote by $g_U$  the genus of any geometric fiber of $\cals_U$ over $\Z[1/N_U\Delta]$, and we also denote by $h(\Delta,N)$ the class number of any Eichler order of level $N$ in a quaternion algebra over $\Q$ of (reduced) discriminant $\Delta$.

\begin{lem}\label{numberss}
	Let $p$ be a rational prime not dividing either $\Delta$ or $N_U$. The number of supersingular points in the special fiber of $\cals$ at $p$ is given by
	\[
	\#\cals_{U,\overline{\F}_p}^{\text{ss}}=(p-1)(g_U-1).
	\]
	On the other hand, if $p$ divides $\Delta$, then the number of superspecial points in the special fiber at $p$ is given by
	\[
	\#\cals^{\text{ssp}}_{\overline{\F}_p}=h\left(\dfrac{\Delta}{p},Np \right) .
	\]
	
\end{lem}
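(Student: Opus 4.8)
The plan is to handle the two cases separately, in each reducing the count to the class number of an order in a suitable definite quaternion algebra by means of the adelic descriptions of Section~\ref{reductionintegralmodels}, and then feeding in a mass formula. So first suppose $p\nmid\Delta N_U$, so that $B$ is split at $p$ and $\cals_U$ has good reduction there. By Lemma~\ref{adelicss} (with $F=\Q$, $v=p$), the supersingular locus $\cals^{\mathrm{ss}}_{U,\overline{\F}_p}$ is in bijection with $G'(\Q)\backslash G'(\A_f)/U'$ for $U'=\calo_{B'_p}^\times\cdot U^p$, where $B'$ is the definite quaternion algebra obtained from $B$ by interchanging the invariants at the archimedean place and at $p$; since $B$ ramifies at an even number of finite primes, $B'$ ramifies exactly at $\infty$, at $p$, and at the primes dividing $\Delta$, hence is definite of reduced discriminant $p\Delta$. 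The double coset set above is the class set of the $\Z$-order in $B'$ that is maximal at $p$ and equals the order underlying $U$ away from $p$.

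Next I would apply Eichler's mass formula. As $U\subseteq V_1(N)$ with $N>3$, the group $\Gamma_U$ is torsion free and every class in the genus of that order has unit group $\{\pm1\}$, so the class number equals the mass; Eichler's formula then gives $\#\cals^{\mathrm{ss}}_{U,\overline{\F}_p}=\tfrac{1}{12}\prod_{q\mid p\Delta}(q-1)\cdot\iota$, where $\iota$ records the level index contributed away from $p$ and the factor at $p$ is $\tfrac{p-1}{12}$ because $B'$ is ramified at $p$ (unique maximal order). On the other hand $\cals_U$ is smooth proper over $\Z[1/N_U\Delta]$, has no cusps (as $B$ is a division algebra), and its analytification is the torsion-free quotient $\Gamma_U\backslash\mathscr{H}$; Gauss--Bonnet then gives $g_U-1=\tfrac{1}{12}\prod_{q\mid\Delta}(q-1)\cdot\iota$ with the same index $\iota$, the away-from-$p$ level being identical on both sides. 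Dividing the two expressions yields $\#\cals^{\mathrm{ss}}_{U,\overline{\F}_p}=(p-1)(g_U-1)$.

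For the second assertion, suppose $p\mid\Delta$, so $B$ ramifies at $p$ and $\cals_U$ has Cerednik--Drinfeld reduction. By the uniformisation (\ref{cduni}) and the structure of the special fibre of $\widehat{\Omega}$ recalled above, the irreducible components of $\widehat{\cals}_{U,\overline{\F}_p}$ correspond to the vertices and the superspecial points (the ordinary double points) to the edges of the quotient by $\Gamma_U$ of the Bruhat--Tits tree of $\PGL_2(\Q_p)$; equivalently, by Lemma~\ref{adelicssp}, they are parametrised by $G'(\Q)_0\backslash G'(\A_f^p)/U^p$, summed over the classes of $\iota$. Here $B'$, obtained by switching the invariants of $B$ at $\infty$ and $p$, is definite, unramified at $p$, of reduced discriminant $\Delta/p$; identifying an edge of the tree with the Eichler order of level $p$ cut out in $B'_p$ by the two maximal orders at its endpoints, one sees that this set is exactly the class set of an Eichler order of level $Np$ in $B'$, and smallness of $U$ again removes all weighting, so $\#\cals^{\mathrm{ssp}}_{\overline{\F}_p}=h(\Delta/p,Np)$ by definition of $h$.

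The main obstacle is the first assertion: one has to align Eichler's mass formula for $B'$ with the genus formula for $\cals_U$ so that the level indices cancel exactly, and to check that no elliptic-point (orbifold) correction enters on either side and no cusp term is present — which is precisely what the hypotheses ``$B$ a division algebra'' and ``$U\subseteq V_1(N)$, $N>3$'' are for — while tracking the factor $2$ coming from the centre $\{\pm1\}$ consistently in both formulas. Once the geometry of the Cerednik--Drinfeld special fibre (components $\leftrightarrow$ vertices, superspecial points $\leftrightarrow$ edges) is granted, the second assertion is comparatively formal.
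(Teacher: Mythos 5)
Your argument is correct in outline, and for the second formula it is essentially the paper's own proof: the paper likewise identifies the superspecial points with the ordinary double points, i.e.\ the edges of the dual graph of the Cerednik--Drinfeld special fiber, and then quotes the edge count from \cite[p.9]{rotger}, which is exactly the identification you spell out (edges of the Bruhat--Tits tree $\leftrightarrow$ Eichler orders of level $p$ in $B'_p$, whence the class set of an Eichler order of level $Np$ in the definite algebra of discriminant $\Delta/p$). For the first formula, however, you take a genuinely different route. The paper does not touch the mass formula at all: it uses the Hasse invariant $H$ for Shimura curves, a section of $\underline{\omega}^{\otimes(p-1)}$ over $\cals_U\times\Spec\overline{\F}_p$ which, by \cite[Prop.~6.1, 6.3]{kass}, vanishes exactly on the supersingular locus with simple zeros; the count is then $\deg\underline{\omega}^{\otimes(p-1)}=(p-1)(g_U-1)$ by Riemann--Roch together with the Kodaira--Spencer isomorphism $\Omega^1\simeq\underline{\omega}^{\otimes2}$. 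Your Deuring-style argument via Lemma \ref{adelicss} plus Eichler's mass formula against the hyperbolic volume computation of $g_U$ does reach the same identity, and it has the merit of making explicit the quaternionic class-set description that the rest of the paper actually uses (Gross lattices, optimal embeddings); but it is the more delicate path, since the statement ``class number $=$ mass'' needs the triviality of all unit groups on the definite side (the same $V_1(N)$, $N>3$ argument you use for $\Gamma_U$, noting that $-1\notin U'$), and the constant you quote for Eichler's formula is only correct after the measure normalizations are fixed so that the away-from-$p$ indices on the two sides literally coincide and the factor at $p$ is exactly $p-1$. The paper's Hasse-invariant proof buys you all of this bookkeeping for free (the $(p-1)$ is just the weight of $H$, and $g_U-1=\deg\underline{\omega}$ is intrinsic to the curve, for any small level $U$), at the cost of invoking the nontrivial inputs of \cite{kass} on the simplicity of the zeros of $H$ in the quaternionic setting.
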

\begin{proof}
	Let us refresh a few facts on the Hasse invariant $H$ for Shimura curves, following \cite[Section 6]{kass} and \cite[Section 5]{buzzard}. Firstly, consider the universal abelian scheme $\varepsilon\colon\cala\rightarrow\cals$ as in Section \ref{universal}, and let $\Omega^1_{\cala/\cals}$ be the canonical bundle. Note that the $\calo_\cals$-module $\varepsilon_*\Omega^1_{\cala/\cals}$ is also a $\calo_{B_p}$-module, and we introduce $\underline{\omega}=(\varepsilon_*\Omega^1_{\cala/\cals})_{2,1}$, which is a line bundle\footnote{This follows from the fact that $\Lie(\cala)_{2,1}$ is locally free of rank-$1$.} over $\cals$.
	 
	We now have that $H$ is a modular form of weight $p-1$ over $\cals\otimes\overline{\F}_p$ constructed as a section of the bundle $\underline{\omega}^{\otimes(p-1)}$ over $\cals\times\Spec\overline{\F}_p$. By \cite[Prop.6.1,6.3]{kass} it follows that $H$ vanishes exactly in the supersingular locus and it has only simple zeros. Therefore the number of supersingular points, i.e., the number of zeros of $H$ counted with multiplicities, is equal to
	\[
	\deg\underline{\omega}^{\otimes(p-1)}=(p-1)(g_U-1).
	\]
	by the Riemann--Roch theorem (as $\Omega^1\simeq\underline{\omega}^{\otimes 2}$). 
	
	Lastly, let us consider the case of $p$ dividing $\Delta$, i.e., that of $B_p$ ramified. In this case, the set of superspecial points corresponds to the edges of the dual graph of the special fiber of the Shimura curve, as explained in \cite[Section 2.2.5]{fms}. The cardinality of such a set of edges is then given in \cite[p.9]{rotger}.
\end{proof}

\begin{rmk}
	Lemma \ref{numberss} can be made more explicit: in fact, a formula for the genus of a Shimura curve over $\Q$ of discriminant $\Delta$  and level $N$, for $N$  squarefree, can be found in \cite[p.280,301]{ogg}. 
	Moreover, see \cite[p.152]{vign} for an explicit formula for the class number of an Eichler order of level $N$.
\end{rmk}

\subsubsection{Proof of Theorem \ref{hao}}

Armed with the constructions and results of the rest of this paper, we follow the line of reasoning of \cite{ao}. Note that, as for the classical Andr\'e-Oort conjecture, where the level structure is inessential (see, for instance, \cite{edix}), also in our case what follows holds  with arbitrary level structure. 
\\

We begin with  parts (1) and (3) of Theorem \ref{hao}. Since the fiber of the map $\cals_U\rightarrow \Spec\Z$ above a prime $p$ is the curve $\cals_U\times\Spec\F_p$, then one easily concludes, because the special fiber of $\cals_U$ at $p$ is of dimension one. It thus follows that there is either a finite set of special points, so the closure is the just their union, or  
the closure of the (infinitely many) special points is $\cals_{U,\F_p}$ itself, because the closure of any infinite subset of an irreducible curve is the whole curve.
\\

Concerning the case (2) of Theorem \ref{hao}, we reduce to the case of special divisors of bounded degree and characteristic going to infinity. Moreover, we can also reduce to the case of  $\overline{\cup\{S : S\in\mathbb{S}\}}^{\text{Zar}}\rightarrow\Spec\Z$ sending its generic points to the generic point of $\Spec\Z$. To show this, let $Z'$ denote the union of the irreducible components of the Zariski closure of $\cup\{S : S\in\mathbb{S}\}$ which are  contained in a special fiber $\cals_{U,\F_p}$. Since the characteristic is not bounded, in $Z'$ there are a finite number of special divisor. We thus consider $Z$ to be $Z'$ minus such divisors. Thus $Z\rightarrow\Spec\Z$ sends the generic points of $\overline{\cup\{S : S\in\mathbb{S}\}}^{\text{Zar}}$ to the generic point of $\Spec\Z$.

Let $p$ be unramified in $B$ and split in $K$, i.e., the associated special divisor $S$ is ordinary. We say that $S$ is $\mathit{canonical}$ if it is the special fiber at $p$ of a special curve $\overline{x(d_K)}^{\text{Zar}}$ such that $\# \text{red}_p(\scal(d_K)^{\text{CM}})=\# \scal(d_K)^{\text{CM}}$. By Serre--Tate theory, there is a unique such canonical lifting $x(d_K)$. For a detailed account on this beautiful theory, we invite the reader to go through \cite{cco}.
\\As in \cite[Section 3.2.4.1]{ao}, there are only finitely many special curves of bounded degree. Moreover, the ordinary special divisors lift to special curves with bounded discriminant, so that we have only a finite number of discriminants and  we deal with a finite union of special curves. This case is $1$-dimensional, so we conclude as for parts (1) and (3).

Let $p$ be unramified in $B$ and inert  in $K$, i.e., the associated $S$ is supersingular.

As in \cite[Thm.3.8]{ao}, this case reduces to showing that
\begin{equation}\label{lim}
\lim_{p\to \infty}\lim_{d_K\to -\infty}\# \text{red}_p(\scal(d_K)^{\text{CM}})=\infty,
\end{equation}
for $p$ coprime to the conductor $c$.
We also need to impose that $p\to\infty$ for supersingular special divisor. In fact, for a fixed $p$,
\[
\lim_{d_K\to -\infty}\# \text{red}_p(\scal(d_K)^{\text{CM}})=\#\cals^{\text{ss}}_{U,\overline{\F}_p}=
(p-1)(g_U-1).
\]

where the first equality is a consequence of Theorem \ref{mainthm2}  the cardinality of the supersingular locus comes from Lemma \ref{numberss}.

In order to prove (\ref{lim}), the rather general approach of \cite{ao} applies also to our scenario. Namely, this essentially consists of finding some bounds for $\# \text{red}_p(\scal(d_K)^{\text{CM}})$.

We say that $d_K$ is  {\em $p$-fundamental} if $p$ does not divide its conductor $c$. Consider $d_{p.f}:= d_K/c^2 \cdot c_p$, where $c_p$ denote the prime-to-$p$ part of $c$. 

\begin{lem}\label{fund}
	A special divisor is equal to the reduction of $\scal(d)^\text{CM}$ for a $p$-fundamental $d$. In particular, we have that $\text{red}_p(\scal(d_K)^{\text{CM}})=\text{red}_p(\scal(d_{p.f})^{\text{CM}})$.
\end{lem}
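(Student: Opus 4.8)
The plan is first to unwind the definitions. Write the discriminant of the CM order attached to $d_K$ as $d_K = D c^2$ with $D$ the fundamental discriminant of $K$ and $c$ the conductor, and factor $c = c_p\, c^{(p)}$ into its prime-to-$p$ part $c_p$ and its $p$-part $c^{(p)}$; then $d_{p.f}$ is the (manifestly $p$-fundamental) discriminant of the order $\calo_{D,c_p}$ of conductor $c_p$, and both assertions of the lemma amount to the equality of subsets
\[
\text{red}_p(\scal(d_K)^{\text{CM}}) \;=\; \text{red}_p(\scal(d_{p.f})^{\text{CM}})
\]
of $\cals_{\overline{\F}_p}$ (recall that a special divisor is reduced, its degree being its cardinality). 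By Lemma \ref{lem2.7} the right-hand side is exactly $\{s : \calo_{D,c_p}\hookrightarrow R_s\ \text{optimally}\}$, so it suffices to identify the left-hand side with this set; I would do this by two inclusions, following the template of \cite{ao} for $Y(1)$.

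For the inclusion $\subseteq$: let $z$ be a CM point of conductor $c$ and set $s = \text{red}_p(z)$. By Lemmata \ref{adelicCM} and \ref{adelicss} (and the construction of Section \ref{liftred}) the reduction map is the identity on the adelic component away from $p$, so the Eichler order $R_s = \End(s)$ agrees at every prime $\ell\neq p$ with $\End(z) = \calo_{D,c}$ after intersecting with the image of $K$; at $p$, since $s$ is supersingular, $R_s\otimes\Z_p$ is the maximal order of the local division algebra $B'_p$, which meets the image of $K_p$ in the maximal order $\calo_{K_p}$ because $K_p$ is a field ($p$ being inert in $K$). Hence $R_s$ meets the image of $K$ precisely in $\calo_{D,c_p}$, which yields an optimal embedding $\calo_{D,c_p}\hookrightarrow R_s$; thus $s$ lies in the right-hand set. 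Incidentally this computation also records the qualitative fact that the endomorphism ring of the reduction of a CM point sees only the prime-to-$p$ part of the conductor.

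For the inclusion $\supseteq$: suppose $s$ carries an optimal embedding $\calo_{D,c_p}\hookrightarrow R_s$; by Lemma \ref{lem2.7} there is a CM point $z_0$ of conductor $c_p$ with $\text{red}_p(z_0) = s$, and we must upgrade it to a CM point of conductor $c$, not merely $c_p$, still reducing to $s$. Here the geometric input is Gross's theory of quasi-canonical liftings: the height-$2$ formal group at $p$ attached to $s$ by the embedding admits, over the ring class field $K[c]$, a quasi-canonical lifting of level $k = \val_p(c)$ whose endomorphism ring meets the image of $K_p$ in $\Z_p + p^{k}\calo_{K_p}$ and which still reduces to $s$; gluing this local modification at $p$ onto $z_0$ away from $p$ produces a CM point of conductor $c$ with reduction $s$. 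Adelically this is the statement that the lift $\theta_p$ of the reduction map factors through the prime-to-$p$ adelic datum of a CM point — precisely the $\kappa$-to-$1$ behaviour of \cite[Thm.3.1]{cj} recalled in the Remark following Lemma \ref{lem2.7}, with $\Gal(K[c]/K[c_p])$ accounting for the full range of $p$-parts of the conductor.

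The main obstacle is this $\supseteq$ inclusion: one must check, in the quaternionic (QM abelian surface) setting rather than for elliptic curves, that quasi-canonical liftings of arbitrary level exist, are defined over the ring class field of the corresponding conductor, reduce to the prescribed supersingular point, and are compatible with the level structure away from $p$, so that they are genuine CM points of $\cals_U$. This can be extracted either from Gross's quasi-canonical lifting theorem applied to the height-$2$ formal group sitting inside the universal $v$-divisible group of Section \ref{universal}, or directly from the reduction-map machinery of \cite{cj} used in Section \ref{liftred}; the superspecial case ($B_p$ ramified, $p$ ramified in $K$) is handled identically, with $R_s$ as constructed in Section \ref{Grosslattice}. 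With these inputs in hand, the remainder of the argument is exactly as in \cite{ao}.
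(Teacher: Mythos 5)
Your proposal is correct in substance, but it takes a genuinely different route from the paper. You argue set-theoretically by double inclusion: the inclusion $\subseteq$ by the local computation that the reduction of a CM point of conductor $c$ carries an optimal embedding of $\calo_{D,c_p}$ into $R_s$ (maximality of $R_s\otimes\Z_p$ in $B'_p$ at the supersingular/superspecial place, identity away from $p$), and the inclusion $\supseteq$ by producing, from a point of conductor $c_p$ reducing to $s$, a point of conductor $c$ with the same reduction via Gross's quasi-canonical liftings of the height-two formal group (equivalently, the $\kappa$-to-$1$ surjectivity of \cite[Thm.3.1]{cj} recalled after Lemma \ref{lem2.7}). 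The paper does none of this directly: it identifies the completed local ring of $\cals_U$ at a supersingular point with that of a modular curve, using Serre--Tate together with \cite[Corollary 4.6]{bz}, imports from the modular-curve case \cite[(3.3)]{ao} the statement that $\text{red}_p(\scal(d_K)^{\text{CM}})$ and $\text{red}_p(\scal(d_K p^2)^{\text{CM}})$ lie in one Hecke orbit, concludes via the Eichler--Shimura relation that they agree, and iterates to descend to the $p$-fundamental discriminant. Your approach is more intrinsic to the quaternionic setting and in effect reproves the general conductor statement of \cite{cj}, at the cost of having to verify quasi-canonical lifting theory (existence, field of definition, compatibility with prime-to-$p$ level structure) for QM abelian surfaces --- which is precisely the work the paper's Serre--Tate/Boutot--Zink transfer is designed to outsource to the already-established $Y(1)$ case; if you invoke \cite[Thm.3.1]{cj} instead, that burden disappears, but note that your use of Lemma \ref{lem2.7} silently imports its coprimality hypotheses (conductor prime to $\gotn$ and to $v$), which are not in the statement of the lemma, whereas the paper's Hecke-orbit argument does not pass through Lemma \ref{lem2.7} at all.
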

\begin{proof}
	The spectrum of the completed local ring at any $x\in\cals^{\text{ss}}_{U,\overline{\F}_p}$, i.e., $\Spec\widehat{\calo}_{\cals_{U,\overline{\F}_p},x}$, represents $U$-level structures on the universal deformation of the corresponding supersingular false elliptic curve over $\overline{\F}_p$ to Artin local $\overline{\F}_p$-algebras. By the Serre--Tate theorem for QM abelian surfaces and its corollary \cite[Corollary 4.6]{bz}, this scheme is isomorphic to the scheme representing $U$-level structures on the universal deformation of a supersingular elliptic curve over $\overline{\F}_p$ to Artin local $\overline{\F}_p$-algebras, i.e., the spectrum of the completed local ring of a modular curve. Therefore we deduce, from the classical modular curve case in \cite[(3.3)]{ao}, that also $\text{red}_p(\scal(d_K)^\text{CM})$ and $\text{red}_p(\scal(d_K\cdot p^2)^\text{CM})$ are in each other orbits under the Hecke operator. By the Eichler--Shimura relation (see \cite[Proposition 1.4.10]{sz2} for the Shimura curves case), they are in the same orbit under Frobenius modulo $p$. Hence
	\begin{equation}\label{ES}
		\text{red}_p(\scal(d_K)^\text{CM})=\text{red}_p(\scal(d_K\cdot p^2)^\text{CM}).
	\end{equation}

	By repeating (\ref{ES}), we descend to the desired equality.
\end{proof}

By Lemma \ref{fund}, we henceforth assume that the discriminant is $p$-fundamental. Recall that $d_K=Dc^2$. For $s\in\cals_{U,\overline{\F}_p}^\text{ss}$, let us denote by $\mu_{d_K}(s)$ the counting measure $\#\{x\in\scal(d_K)^\text{CM}: \text{red}_p(x)=s\}$ (which is in parallel with the normalized measure in (\ref{measure})). We immediately have
\begin{equation}\label{trivialbound}
\#\pic(\calo_{D,c})=\sum_{s}\mu_{d_K}(s)\le\#\text{red}_p(\scal(d_K)^\text{CM})\max_s\mu_{d_K}(s),
\end{equation}
where $s\in\cals_{U,\overline{\F}_p}^{\text{ss}}$. Hence we now need to obtain some upper bounds for $\#\{x\in\scal(d_K)^\text{CM}: \text{red}_p(x)=s\}$. By (\ref{trivialbound}) we thus reduce (\ref{lim}) to  showing that
\begin{equation}
	\lim_{p\to\infty}\lim_{d_K\to-\infty}\dfrac{\max_s\mu_{d_K}(s)}{\#\pic(\calo_{D,c})}=0,
\end{equation}
for $p$ not dividing the conductor $c$ of $d_K$. As in Section \ref{ternary}, to a supersingular point $s$ we can attach its Gross lattice $\Lambda_s$ and its corresponding ternary quadratic form $Q_s$. By   Serre--Tate and by Lemma \ref{3.5}, we have 
\[
\mu_{d_{p.f}}\le r^*(Q_s,|d_K|)
\]
where we recall that $ r^*(Q_s,|d_K|)$ denotes the number of primitive representations of $|d_K|$ via $Q_s$. Thus it is enough to show that
\begin{equation}\label{finalbound}
	\lim_{p\to\infty}\lim_{d_{p.f}\to-\infty}\dfrac{r^*(|d_K|,Q_s)}{\#\pic(\calo_{D,c})}=0.
\end{equation}
Such a uniform bound is obtained almost verbatim as in \cite[Sections 5.1,5.3]{ao}: note that, since we deal with non-trivial level structure, the covolume of $\Lambda_s$ needed in the Dirichlet--Hermite bound \cite[Proposition 6.1]{ao} in our case is $2Np$ by Lemma \ref{disc}.

In the case of a superspecial special divisor, the proof follows the very same lines as in the supersingular case, as superspecial implies supersingular, and one can still essentially exploit the analytic bounds of \cite{ao}. A (minor) difference is the cardinality of the superspecial locus: as in (\ref{lim}), we similarly need to let $p$ go to infinity. In fact, by Lemma \ref{numberss} and the explicit formula for the class number of an Eichler order in \cite[p.152]{vign}, we see that 
$\#\cals^{\text{ssp}}_{\F_p}$ grows linearly in $p$.
We also finally point out that for the Gross lattice, one has to consider the construction we described in Sections \ref{Grosslattice} and \ref{ternary} for superspecial points.
\\

In conclusion, let us consider the case of $\mathbb{S}$ containing an infinite subsequence with both the characteristic and the degree going to infinity, i.e., the case (4) of Theorem \ref{hao}.  If the characteristics of the special points do not belong to a finite set, then their closure $Z$ intersects infinitely many fibers, and the divergence of the degrees implies that
\[
\limsup_{p\to \infty} \#(Z \cap \cals_{U,\F_p}) = \infty,
\]
namely, $Z$ has intersection of arbitrarily large order as $p$ goes to infinity. Now we claim that a  closed (hence proper) subscheme $Z$ of $\cals_U$ have bounded intersection with almost all fibers. Therefore we conclude that $Z=\cals_U$. To prove such a claim, it is enough to note that $Z$ does not contain all of $\cals_{U,\Q}=\scal$, hence $Z_\Q$ is finite. By generic flatness, there is some $N>0$ such that $Z$ is flat over $\Z[1/N]$. Therefore we have that $\#(Z\times\Spec\F_p) = \# Z_\Q$ over $\Z[1/N]$. This implies that the size of $Z_{\F_p}$ is bounded for all $p$ not dividing $N$.


\end{document}